\theoremstyle{plain}
\newtheorem{thm}{Theorem}[section]
\newtheorem{prop}[thm]{Proposition}
\newtheorem{corollary}[thm]{Corollary}
\newtheorem{lemma}[thm]{Lemma}
\theoremstyle{definition}
\newtheorem{definition}[thm]{Definition}
\newtheorem{notation}[thm]{Notation}
\newtheorem{definitions}[thm]{Definitions}
\newtheorem{exa}[thm]{Example}
\begin{document}
\pagestyle{plain}

\title{Symmetric multisets of permutations}
\author{Jonathan S. Bloom}
\date{\today\\[10pt]}

\maketitle

\begin{abstract}
The following long-standing problem in combinatorics was first posed in 1993 by Gessel and Reutenauer \cite{GesReut93}. For which multisubsets $B$ of the symmetric group $\fS_n$ is the quasisymmetric function
$$Q(B) = \sum_{\pi \in B}F_{\Des(\pi), n}$$
a symmetric function?  Here $\Des(\pi)$ is the descent set of $\pi$ and  $F_{\Des(\pi), n}$ is Gessel's fundamental basis for the vector space of quasisymmetric functions.  The purpose of this paper is to provide a useful characterization of these multisets. Using this characterization we prove a conjecture of Elizalde and Roichman from~\cite{ElizaldeRoichman2015}.  Two other corollaries are also given.  The first is a short new proof that conjugacy classes are symmetric sets, a well known result first proved by Gessel and Reutenauer~\cite{GesReut93}.  Our second corollary is a unified explanation that both left and right multiplication of symmetric multisets, by inverse $J$-classes, is symmetric.  The case of right multiplication was first proved by Elizalde and Roichman in~\cite{ElizaldeRoichman2015}.  
\end{abstract}

\section{Introduction}\label{sec:intro}
 For integers $m\leq n$ set $[m,n] := \{m,m+1,\ldots, n\}$.  When $m=1$ we simply write $[n]$ instead of $[1,n]$.  We denote by $\fS_n$ the symmetric group on $[n]$. A multiset $B$ whose elements are taken from $\fS_n$ is denoted by $B\Subset \fS_n$.  The standard notation $B\subseteq \fS_n$ is reserved to indicate that $B$ is a set.  Additionally, given two multisets $A,B\Subset \fS_n$ we write $A\sqcup B$ to denote their disjoint union.  

For any $\pi\in \fS_n$ its \emph{descent set} is 
$$\Des(\pi):=\makeset{i\in [n-1]}{\pi_i>\pi_{i+1}}\subseteq [n-1].$$  
For each $J\subseteq[n-1]$ define the \emph{inverse $J$-class} as 
$$R_J^{-1} = \makeset{\pi^{-1}\in \fS_n}{\Des(\pi)\subseteq J}.$$ 
For multisets $A,B\Subset \fS_n$ we write $A\equiv B$ to indicate that there is descent-set-preserving bijection between $A$ and $B$ or, equivalently, in terms of generating functions, that 
$$\sum_{\pi\in A} \textbf{x}^{\Des(\pi)} = \sum_{\pi\in B} \textbf{x}^{\Des(\pi)}.$$
To indicate that $\Des(\pi) = \Des(\tau)$ for $\pi,\tau\in\fS_n$ we abuse this notation and sometimes write $\pi\equiv \tau$. Further, define $AB$ to be the multiset of all products $\pi\tau$ where $\pi\in A$ and $\tau\in B$.  In the case when $A= \{\pi\}$ we simply write $\pi B$.   Playing a crucial role in this paper are the products $BR_J^{-1}$ and $R_J^{-1}B$.  In the case that $B$ is such that 
$$BR_J^{-1}\equiv R_J^{-1}B$$
for all $J\subseteq[n-1]$ we say that $B$ is \emph{$D$-commutative with all inverse $J$-classes}.

Next we establish some basic notions for the theory of symmetric functions.  Let $\mathbf{x}= \{x_1,x_2,\ldots\}$ be a countably infinite set of commuting variables. We say a formal power series in $\mathbb{Q}[[\mathbf{x}]]$ is \emph{symmetric} if it is of bounded degree and invariant under all permutations of its indices.  The vector space of all homogeneous symmetric functions of degree $n$ is denoted by $\Lambda(n)$.  We make use of two classical bases for $\Lambda(n)$.  Given an integer partition $\lambda = (\lambda_1 \geq \lambda_2\geq \cdots\geq \lambda_p )\vdash n$ we set $m_\lambda$ to be the symmetric function obtained by symmetrizing the monomial 
$$x_{1}^{\lambda_1}\cdots x_{p}^{\lambda_p}.$$
The $m_\lambda$ are called \emph{monomial symmetric functions} and they constitute our first basis.  We also need the \emph{Schur functions} which we denote by $\{s_\lambda\}_{\lambda\vdash n}$.  For a combinatorial definition of these functions and a proof that they are a basis for $\Lambda(n)$ we direct the reader to \cite{FultonBook} or \cite{EC2}.  We say that $f\in \Lambda(n)$ is \emph{Schur positive} provided that it can be written as
$$f= \sum_{\lambda\vdash n} c_\lambda s_\lambda$$
where $c_\lambda$ are nonnegative integer coefficients. 

We next recall the quasisymmetric functions.  For each integer composition $\alpha=(\alpha_1,\ldots, \alpha_p)\models n$  we define the \emph{monomial symmetric function} as
$$M_\alpha: = \sum_{i_1<i_2<\cdots <i_p} x_{i_1}^{\alpha_1}x_{i_2}^{\alpha_2}\cdots x_{i_p}^{\alpha_p}.$$
We denote by $\QSYM(n)$ the $\mathbb{Q}$-span of $\{M_\alpha\}_{\alpha\models n}$ and call the elements of this space  \emph{quasisymmetric functions}.  As the monomial symmetric functions, which are indexed by compositions of $n$,  are a canonical basis for this space, we have $\dim(\QSYM(n)) = 2^{n-1}$.  Lastly, since 
$$m_\lambda = \sum_{\alpha} M_\alpha,$$
where the sum is over compositions $\alpha$ obtained by permuting the parts of $\lambda$, it follows that $\Lambda(n) \subseteq \QSYM(n)$.  

We shall also need Gessel's fundamental basis for $\QSYM(n)$.  To define this basis set
$$F_\alpha = \sum_{\beta\leq \alpha} M_\beta,$$ 
where  $\beta\leq \alpha$ indicates that $\beta$ is a refinement of $\alpha$.  The collection $\{F_\alpha\}_{\alpha\models n}$  is called the \emph{fundamental} basis for $\QSYM(n)$.  

To connect quasisymmetric functions to multisets of permutations we recall the well-worn bijection between subsets $J = \{j_1<j_2<\cdots < j_s\}$ of $[n-1]$ and compositions of $n$ given by 
$$J \mapsto (j_1, j_2-j_1, \ldots , j_{s}-j_{s-1}, n-j_s).$$ 
We denote the image of $J$ under this bijection by $\co(J)$.  Using this correspondence we also index the fundamental basis of $\QSYM(n)$ by subsets of $[n-1]$ and write $F_{J,n}: = F_\alpha$ where $\alpha$ corresponds to $J\subseteq[n-1]$.  For any $B\Subset \fS_n$ we recall the quasisymmetric function 
$$Q(B) := \sum_{\pi\in B} F_{\Des(\pi),n}\in \QSYM(n),$$
first defined in \cite{Ges83}. 
We say that $B$ is \emph{symmetric} if $Q(B)$ is symmetric.  If, moreover, $Q(B)$ is  Schur-positive then, following the language first established by Adin and Roichman in \cite{AdinRoichman15}, we say $B$ is \emph{fine}.

Writing $J\sim K$ whenever $\co(J)$ is a permutation of $\co(K)$, the main result of this paper is a proof that the following are equivalent: 
\begin{enumerate}
	\item[a)] $B$ is $D$-symmetric (defined in Section~\ref{sec:char})
	\item[b)] $B$ is $D$-commutative with all $J$-classes
	\item[c)] $BR_J^{-1}\equiv BR_K^{-1}$ whenever $J\sim K$
	\item[d)] $R_J^{-1}B\equiv R_K^{-1}B$ whenever $J\sim K$
	\item[e)] $B$ is symmetric.
\end{enumerate}

As an immediate consequence we prove Conjecture~10.4 in \cite{ElizaldeRoichman2015}, due to Elizalde and Roichman, in which they hypothesizes that if $B$ is fine then 
$$BD_J^{-1} \equiv D_J^{-1}B$$
where $D_J^{-1} = \makeset{\pi^{-1}\in \fS_n}{\Des(\pi) = J}$.  Two additional corollaries are proved.  The first gives a new proof of the fact that conjugacy classes are symmetric.  This was first established by Gessel and Reutenauer in~\cite{Ges83} in which they proved the stronger result, by way of a more involved proof, that  conjugacy classes are actually fine sets.  Our second corollary gives a unified explanation that the multiset obtained by either left or right multiplication of symmetric multisets by $R_J^{-1}$ is symmetric.  The case of right multiplication in the context of fine sets was first proved by Elizalde and Roichman in~\cite{ElizaldeRoichman2015} although their techniques were unable to establish the case of left multiplication. 

The paper is organized as follows.  In the next section we establish key definitions and lemmas used throughout.  In Section~\ref{sec:char} we formally state our main theorem and prove the aforementioned corollaries.  As the proof of our main theorem is involved, we break it into several propositions.  The propositions that follow immediately from definitions are in the main part of Section~\ref{sec:char}.  Statements that imply symmetry are contained in Subsection~\ref{subsec:sufficienty} as they involve a careful analysis of the change of basis between the fundamental basis and the monomial quasisymmetric functions.   The proof that symmetry implies $D$-symmetry involves techniques from the theory of tableaux and occupies Subsection~\ref{subsec:Knuth}.  

\section{Preliminaries}

An \emph{ordered set partition} of $[n]$ is a sequence $\cU=(U_1,\ldots, U_s)$ of nonempty disjoint sets $U_i$, called \emph{blocks}, whose union is $[n]$. We write $\cU\vdash [n]$ to indicate that $\cU$ is an ordered set partition of $[n]$ and we denote by $\Pi(n)$ the set of all ordered set partitions of $[n]$.  For each composition $\alpha\models n$ we further define $\Pi(\alpha)$ to be the collection of all $\cU\vdash [n]$ where $|U_i| = \alpha_i$. If $\alpha$ corresponds to $J\subseteq [n-1]$ then we also denote this set by $\Pi(n,J)$.  

\begin{exa}
The set partitions
$$\cU=(\{2,5,8\},\{1,3,4\},\{6,7,9\})\quad\myand\quad \cV=(\{2,5,8\},\{6,7,9\},\{1,3,4\})$$
are distinct ordered with $\cU,\cV\in \Pi(3,3,3)= \Pi(\{3,6\},9)$.	
\end{exa}

For the remainder of this section fix $J=\{j_1<\cdots <j_s\}\subseteq [n-1]$ and adopt the convention that $j_0=0$ and $j_{s+1} = n$.  To avoid repeating ourselves the symbol $\cU$ will always denote an element of $\Pi(n,J)$ and $U_i$ will always denote its $i$th part.

\begin{definition}
Let $U$ and $V$ be disjoint alphabets and let $\pi$ and $\tau$ be permutations of $U$ and $V$ respectively.  We define the  \emph{shuffle} of $\pi$ and $\tau$ to be the set $\pi\shuffle \tau$ consisting of all permutations of $U\cup V$ where the letters in $U$ appear in the same order as in $\pi$ and the letters in $V$ appear in the same order as in $\tau$.  

Additionally, for $\pi\in\fS_n$ and $\tau\in\fS_m$ we define
$$\pi\cshuffle \tau : = \pi \shuffle \tau^{+n}$$
where $\tau^{+n}$ is the word obtained by adding $n$ to each term of $\tau$.  So $\pi\cshuffle \tau \subseteq \fS_{n+m}$.  
\end{definition}

\begin{exa}
If $\pi = 12$ and $\tau = 21$ then 
$$\pi\cshuffle \tau = \{ 1243, 1423, 4123, 1432, 4132, 4312\},$$
where we have dropped commas for readability. 	
\end{exa}

As the main results in this paper involve the sets $R_J^{-1}$ we now look at multiple ways of describing such sets. First note that
\begin{equation}\label{eq:R_J as shuf}
R_J^{-1} =  (1\ldots j_1) \shuffle  (j_1+1\ldots j_2) \shuffle \cdots \shuffle (j_s+1\ldots n).
\end{equation}
Hence to every $\cU\in \Pi(n,J)$ there corresponds the permutation $\delta_\cU \in R_J^{-1}$ defined so that $U_1$ is the set of positions occupied by the subsequence  $(1,2,\ldots,j_1)$, $U_2$ is the set of positions occupied by the subsequence $(j_1+1,\ldots, j_2)$, etc. So 
\begin{equation}\label{eq:R_J as delta}
R_J^{-1} = \makeset{\delta_\cU}{\cU\in \Pi(n,J)}
\end{equation}
from which it follows that $\Pi(n,J)$ is in correspondence with  $R_J^{-1}$. 

\begin{exa}
If $n=4$ and $J = \{2\}\subseteq [3]$ then we obtain the following correspondence: 
\begin{center}
\begin{tabular}{ccc}
$\cU$ & $\to$ & $\delta_\cU$\\
\hline
(12, 34)&& 1234\\
(13, 24)&& 1324\\
(14, 23)&& 1342\\
(23, 14)&& 3124\\
(24, 13)&& 3142\\
(34, 12)&& 3412
\end{tabular}	
\end{center}
where, for example, we have written $(12, 34)$ instead of the more verbose $(\{1,2\}, \{3,4\})$.  	
\end{exa}

Next we consider right multiplication by inverse $J$-classes. Recall that if  $\pi,\tau\in \fS_n$ then 
$$\pi\cdot (\tau_1\tau_2\ldots\tau_n) = \pi_{\tau_1}\ldots \pi_{\tau_n},$$ 
where $\cdot$ denotes group multiplication.    Applying this to (\ref{eq:R_J as shuf}) it follows that 
\begin{equation}\label{eq:shuffle pi}
\pi  R_J^{-1} = (\pi_1\ldots \pi_{j_1}) \shuffle(\pi_{j_1+1}\ldots \pi_{j_2})\shuffle  \cdots \shuffle (\pi_{j_s+1}\ldots \pi_{n}).
\end{equation}

\begin{exa}
When $\cU =(\{2,5,8\},\{1,3,4\},\{6,7,9\})$ we see that
$$\delta_\cU = {\color{blue}4}\ {\color{red}1}\ {\color{blue}5}\ {\color{blue}6}\ {\color{red}2}\ 7\ 8\ {\color{red}3}\ 9.$$
and if we take $\pi = {\color{red} 9\ 1\ 4\ }{\color{blue} 5\ 2\ 8\ }7\ 3\ 6$
then 
$$\pi\cdot\delta_\cU = {\color{blue}5}\ {\color{red}9}\ {\color{blue}2}\ {\color{blue}8}\ {\color{red}1}\ 7\ 3\ {\color{red}4}\ 6.$$
Note that the first three terms in $\pi$ (colored red) are in positions $2,5,8$, the next three (colored blue) are in positions $1,3,4$ and the last three (black) are in positions $6,7,9$. 	
\end{exa}

We now require the following consequence of Stanley's famous shuffling theorem (see, \cite[Exercise~3.161]{EC1}).

\begin{lemma}\label{lem:shuf invariant}
Assume $\pi$ and $\tau$ are permutations of disjoint alphabets as are $\pi'$ and $\tau'$.   If $\pi\equiv \pi'$ and $\tau \equiv \tau'$ then
$$\pi\shuffle \tau \equiv \pi'\shuffle \tau'.$$
\end{lemma}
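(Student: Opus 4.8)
The plan is to deduce the lemma from the product formula for the fundamental quasisymmetric functions, which is the quasisymmetric incarnation of Stanley's shuffling theorem cited above. Concretely, if $\pi$ is a permutation of an alphabet of size $k$ and $\tau$ is a permutation of a disjoint alphabet of size $\ell$, then that theorem asserts
$$\sum_{\sigma\in \pi\shuffle\tau} F_{\Des(\sigma),\,k+\ell} = F_{\Des(\pi),\,k}\cdot F_{\Des(\tau),\,\ell},$$
where the descent sets of $\pi$ and $\tau$ are read off from the relative order of their letters (equivalently, from their standardizations). The crucial observation is that the right-hand side is a product of two fundamental quasisymmetric functions indexed solely by $\Des(\pi)$ and $\Des(\tau)$; it carries no further dependence on the permutations themselves or on their underlying alphabets.

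With this in hand the argument is short. Since $\pi\equiv\pi'$ and $\tau\equiv\tau'$, the permutations $\pi,\pi'$ share a common length $k$ and a common descent set, and likewise $\tau,\tau'$ share a common length $\ell$ and a common descent set. Applying the displayed formula to both pairs and using $F_{\Des(\pi),k}=F_{\Des(\pi'),k}$ together with $F_{\Des(\tau),\ell}=F_{\Des(\tau'),\ell}$ yields
$$\sum_{\sigma\in \pi\shuffle\tau} F_{\Des(\sigma),\,k+\ell} = F_{\Des(\pi),k}F_{\Des(\tau),\ell} = F_{\Des(\pi'),k}F_{\Des(\tau'),\ell}=\sum_{\sigma\in \pi'\shuffle\tau'} F_{\Des(\sigma),\,k+\ell}.$$
Because $\{F_{J,\,k+\ell}\}_{J\subseteq[k+\ell-1]}$ is a basis of $\QSYM(k+\ell)$, equating coefficients shows that the multiset $\{\Des(\sigma):\sigma\in\pi\shuffle\tau\}$ coincides with the multiset $\{\Des(\sigma):\sigma\in\pi'\shuffle\tau'\}$. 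This is precisely the assertion that there is a descent-set-preserving bijection between the two shuffles, i.e.\ $\pi\shuffle\tau\equiv\pi'\shuffle\tau'$.

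The only point requiring genuine care — and the step I expect to be the main obstacle — is locating and citing Stanley's theorem in exactly the form displayed above, since it is classically phrased in the language of $P$-partitions rather than as a product rule for the fundamental basis. In particular, one must verify that $\Des(\sigma)$, for $\sigma$ ranging over the shuffle, is taken as a set of descent \emph{positions} of a word on the combined alphabet, while $\Des(\pi)$ and $\Des(\tau)$ are the descent sets of the standardizations, so that all three quasisymmetric functions are unambiguously defined and the cited identity applies verbatim. Once these conventions are fixed, the lemma follows from linear independence of the fundamental basis with no further computation.
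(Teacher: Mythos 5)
Your argument is correct, and it is essentially the paper's own approach: the paper gives no proof at all, simply citing Stanley's shuffling theorem (EC1, Exercise 3.161), of which the $F$-basis product rule you invoke is the standard quasisymmetric-function formulation. Your derivation via $F_{\Des(\pi),k}F_{\Des(\tau),\ell}=\sum_{\sigma\in\pi\shuffle\tau}F_{\Des(\sigma),k+\ell}$ and linear independence of the fundamental basis just makes explicit the deduction the paper leaves to the reader.
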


Using this lemma together with (\ref{eq:shuffle pi}) we see that
\begin{equation}\label{eq:R_J as cshuf}
\pi R_J^{-1} \equiv \std(\pi_1\ldots \pi_{j_1}) 
\cshuffle \std(\pi_{j_{1}+1}\ldots \pi_{j_2})
\cshuffle \cdots 
\cshuffle \std(\pi_{j_s}\ldots \pi_{s+1}),	
\end{equation}
where $\std$ is \emph{standardization}. 

Although not apparent at this point it turns out that it is much easier to work with the elements on the right side of (\ref{eq:R_J as cshuf}).  To do this we make the following definition.

\begin{definition}
For $\cU\in \Pi(n,J)$ and $\pi\in \fS_n$ we define $\sigma_\cU(\pi)$ to be the element on the right side of (\ref{eq:R_J as cshuf}) in which $U_1$ is the set of positions occupied by the subsequence $\std(\pi_1\ldots \pi_{j_1})$, $U_2$ is the set of positions occupied by the subsequence $\std(\pi_{j_1+1}\ldots \pi_{j_2})^{+j_1}$, etc.     
\end{definition}

The next lemma is now immediate and makes use of a standard convention.   For any function $f:\fS_n \to \fS_n$ and $B\Subset\fS_n$ we denote by $f(B)$ the multiset obtained by applying $f$ to each element in $B$.   

\begin{lemma}\label{lem:r as sigma}
For any $B\Subset \fS_n$ we have 
$$BR_J^{-1} \equiv \bigsqcup_{\cU\in \Pi(n,J)} \sigma_\cU(B).$$
\end{lemma}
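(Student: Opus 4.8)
The plan is to reduce the multiset identity to the single-permutation equivalence (\ref{eq:R_J as cshuf}) and then to sum over $\pi\in B$. The essential observation is that the right-hand side of (\ref{eq:R_J as cshuf}) is a $\cshuffle$ of the standardized blocks of $\pi$, and the terms of this shuffle are precisely enumerated by the ordered set partitions $\cU\in\Pi(n,J)$ through the map $\cU\mapsto\sigma_\cU(\pi)$.

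First I would unwind the definition of $\sigma_\cU(\pi)$. A single term in the shuffle on the right of (\ref{eq:R_J as cshuf}) is determined by, and determines, a choice of which positions of $[n]$ are occupied by each of the $s+1$ standardized (and suitably shifted) subwords; since these subwords have lengths $j_1-j_0,\ldots,j_{s+1}-j_s$, such a choice is exactly an ordered set partition $\cU\in\Pi(n,J)$, and by the very definition of $\sigma_\cU(\pi)$ the associated term is $\sigma_\cU(\pi)$. Moreover distinct $\cU$ give distinct permutations, since the values contributed by different blocks occupy disjoint intervals. Hence $\cU\mapsto\sigma_\cU(\pi)$ enumerates the shuffle without repetition, and (\ref{eq:R_J as cshuf}) becomes
$$\pi R_J^{-1}\equiv \bigsqcup_{\cU\in\Pi(n,J)}\sigma_\cU(\pi)$$
for each fixed $\pi\in\fS_n$.

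Next I would pass from a single permutation to the multiset $B$. Since $\equiv$ is characterized by equality of the descent generating functions $\sum\mathbf{x}^{\Des(\cdot)}$, and these generating functions are additive under disjoint union, the relation $\equiv$ is preserved when one takes disjoint unions of both sides. Writing $BR_J^{-1}=\bigsqcup_{\pi\in B}\pi R_J^{-1}$ and applying the per-permutation equivalence termwise gives
$$BR_J^{-1}\equiv \bigsqcup_{\pi\in B}\bigsqcup_{\cU\in\Pi(n,J)}\sigma_\cU(\pi).$$
Finally I would interchange the two disjoint unions and recall the convention $\sigma_\cU(B)=\bigsqcup_{\pi\in B}\sigma_\cU(\pi)$ to obtain the claimed identity $BR_J^{-1}\equiv\bigsqcup_{\cU\in\Pi(n,J)}\sigma_\cU(B)$.

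There is no serious obstacle here; the lemma is essentially a bookkeeping consequence of (\ref{eq:R_J as cshuf}). The only point requiring a moment's care is the first step, namely verifying that $\cU\mapsto\sigma_\cU(\pi)$ is a bijection from $\Pi(n,J)$ onto the shuffle appearing in (\ref{eq:R_J as cshuf}), together with the routine remark that $\equiv$ respects disjoint unions, which is what licenses assembling the single-permutation equivalences into the multiset statement.
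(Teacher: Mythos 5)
Your argument is correct and is precisely the reasoning the paper leaves implicit when it calls this lemma ``immediate'': the map $\cU\mapsto\sigma_\cU(\pi)$ bijectively enumerates the terms of the shuffle on the right of (\ref{eq:R_J as cshuf}), and $\equiv$ is additive over disjoint unions, so summing over $\pi\in B$ and interchanging the unions gives the claim. There is nothing to add or correct.
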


\medskip

Next consider left multiplication by inverse $J$-classes.  Although it trivially follows by our definitions that 
$$R_J^{-1}\pi = \makeset{\delta_\cU \pi}{\cU\in \Pi(n,J)},$$
a ``twist" is needed in order for us to compare left and right multiplication.  As such we make the following definition.
\begin{definition}
For each $\pi\in \fS_n$ define 
$$\rho_\cU(\pi) := \delta_\cW\cdot \pi$$
where $\cW$ is the ordered set partition in $\Pi(n,J)$ whose $i$th block is $\pi(U_i)$.
\end{definition}
Since the mapping $\pi:\Pi(n,J) \to \Pi(n,J)$ given by 
$$(U_1,U_2,\ldots) \mapsto (\pi(U_1),\pi(U_2),\ldots)$$
is clearly bijective we obtain the next lemma.

\begin{lemma}\label{lem:l as rho}
For any $B\Subset \fS_n$ we have 
$$R_J^{-1}B = \bigsqcup_{\cU\in \Pi(n,J)} \rho_\cU(B).$$
\end{lemma}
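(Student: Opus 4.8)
The plan is to reduce the identity to the case of a single permutation and then invoke the bijectivity of the block-relabelling map that already appears in the excerpt. Both sides distribute over the disjoint union of multisets: directly from the definitions one has $R_J^{-1}(A\sqcup B) = R_J^{-1}A \sqcup R_J^{-1}B$, and since $\rho_\cU$ is applied elementwise, $\rho_\cU(A\sqcup B) = \rho_\cU(A)\sqcup \rho_\cU(B)$. After interchanging the two disjoint unions on the right-hand side, it therefore suffices to establish, for each fixed $\pi\in\fS_n$, the multiset equality
$$R_J^{-1}\pi = \bigsqcup_{\cU\in\Pi(n,J)}\rho_\cU(\pi).$$

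For a single $\pi$ I would begin from the identity $R_J^{-1}\pi = \makeset{\delta_\cU\pi}{\cU\in\Pi(n,J)}$ already recorded above, which is immediate from the description (\ref{eq:R_J as delta}). Unwinding $\rho_\cU(\pi) = \delta_\cW\cdot\pi$ with $\cW = (\pi(U_1),\pi(U_2),\ldots)$, I would reindex the right-hand multiset by $\cW$ instead of $\cU$. The crucial observation is that the map $\Phi_\pi\colon\Pi(n,J)\to\Pi(n,J)$ defined by $(U_1,U_2,\ldots)\mapsto(\pi(U_1),\pi(U_2),\ldots)$ is a well-defined bijection: because $\pi$ is a bijection of $[n]$ we have $|\pi(U_i)| = |U_i|$, so $\cW = \Phi_\pi(\cU)$ again lies in $\Pi(n,J)$, and the inverse of $\Phi_\pi$ is manifestly $\Phi_{\pi^{-1}}$. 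Hence as $\cU$ runs over $\Pi(n,J)$ the partition $\cW$ runs over $\Pi(n,J)$ exactly once, giving
$$\bigsqcup_{\cU\in\Pi(n,J)}\rho_\cU(\pi) = \bigsqcup_{\cU\in\Pi(n,J)}\delta_{\Phi_\pi(\cU)}\cdot\pi = \bigsqcup_{\cW\in\Pi(n,J)}\delta_\cW\cdot\pi = \makeset{\delta_\cU\pi}{\cU\in\Pi(n,J)} = R_J^{-1}\pi.$$

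The only point needing attention is the multiplicity bookkeeping: since the correspondence $\cU\leftrightarrow\delta_\cU$ of (\ref{eq:R_J as delta}) is itself a bijection, each element of $R_J^{-1}$ — and hence each element of $R_J^{-1}\pi$ — arises from exactly one $\cU$, so the reindexing above is an equality of multisets rather than merely of underlying sets. I do not expect a genuine obstacle: the entire content of the lemma is that the ``twist'' built into $\rho_\cU$, namely replacing the index $\cU$ by $\Phi_\pi(\cU)$, is absorbed by the bijection $\Phi_\pi$ and so leaves the resulting multiset unchanged. This is precisely why the twist can be introduced at no cost while still being available to align $\rho_\cU$ with the right-multiplication maps $\sigma_\cU$ in the later comparison of $R_J^{-1}B$ and $BR_J^{-1}$.
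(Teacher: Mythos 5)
Your argument is correct and is essentially the paper's own proof: the paper likewise reduces to the observation that the block-relabelling map $(U_1,U_2,\ldots)\mapsto(\pi(U_1),\pi(U_2),\ldots)$ is a bijection of $\Pi(n,J)$, so that reindexing by it leaves the multiset $\{\delta_\cW\cdot\pi\}$ unchanged. Your write-up simply makes explicit the reduction to a single $\pi$ and the multiplicity bookkeeping that the paper leaves implicit.
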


We end this section with an important description of how the descent structure of $\rho_\cU(\pi)$ and $\sigma_\cU(\pi)$ are related.  For any $S\subseteq \mathbb{Z}$ set $S^*=\makeset{i\in S}{i+1\in S}$.  Using this define for any $\cU= (U_1,U_2,\ldots) \vdash [n]$ the set
$$\cU^* := U_1^* \cup U_2^* \cup \cdots\subseteq [n-1]$$

\begin{lemma}\label{lem:des char}
Let $\pi,\tau\in \fS_n$ and $\cU\in \Pi(n,J)$. Then $\rho_\cU(\pi) \equiv \sigma_\cU(\tau)$
if and only if for each $u\in \cU^*$ we have
$$u \in \Des(\pi) \iff \delta_\cU(u) \in \Des(\tau).$$
Moreover we have
\begin{equation}\label{eq:des rho}
\Des(\rho_\cU(\pi)) = \Des(\delta_\cU) \cup \left(\Des(\pi)\cap \cU^*\right)	
\end{equation}
and
\begin{equation}\label{eq:des sigma}
\Des(\sigma_\cU(\tau)) = \Des(\delta_\cU) \cup  \makeset{u\in \cU^*}{\delta_\cU(u)\in\Des(\tau)}.	
\end{equation}
\end{lemma}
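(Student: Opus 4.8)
The plan is to reduce the entire statement to the two descent formulas (\ref{eq:des rho}) and (\ref{eq:des sigma}), since the claimed equivalence is immediate once these are in hand. Indeed, recalling that for single permutations $\rho_\cU(\pi)\equiv\sigma_\cU(\tau)$ means $\Des(\rho_\cU(\pi))=\Des(\sigma_\cU(\tau))$, and noting (established below) that $\Des(\delta_\cU)$ is disjoint from $\cU^*$, comparing the two formulas shows the descent sets agree precisely when the subsets of $\cU^*$ they contribute agree, i.e. when $\Des(\pi)\cap\cU^* = \{u\in\cU^* : \delta_\cU(u)\in\Des(\tau)\}$. As both sides are subsets of $\cU^*$, this equality read pointwise over $u\in\cU^*$ is exactly the stated biconditional. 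So the real content lies entirely in the two formulas.

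The first step is to record the descent structure of $\delta_\cU$ itself. Since within each block $U_b$ the permutation $\delta_\cU$ places the consecutive values $j_{b-1}+1,\ldots,j_b$ in increasing order of position, no two consecutive positions lying in the same block can form a descent; hence $\Des(\delta_\cU)\cap\cU^*=\emptyset$. For consecutive positions $i\in U_b$ and $i+1\in U_c$ with $b\neq c$, the value at $i$ lies in $[j_{b-1}+1,j_b]$ and the value at $i+1$ in $[j_{c-1}+1,j_c]$; since these intervals are ordered by block index, position $i$ is a descent if and only if $b>c$. This ``intervals ordered by block'' principle is the engine behind both formulas.

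Next I would prove (\ref{eq:des rho}) by the same dichotomy. Writing $\rho_\cU(\pi)=\delta_\cW\cdot\pi$ with $W_b=\pi(U_b)$ and using $(\delta_\cW\cdot\pi)_i=(\delta_\cW)_{\pi_i}$, one checks that for $i\in U_b$ the value $\rho_\cU(\pi)_i = j_{b-1}+\operatorname{rank}_{W_b}(\pi_i)$ again lies in $[j_{b-1}+1,j_b]$. Thus for consecutive positions in different blocks the cross-block comparison is identical to that of $\delta_\cU$, contributing exactly $\Des(\delta_\cU)$. For consecutive positions $i,i+1\in U_b$ (so $i\in\cU^*$) the rank is monotone in the value, so $\rho_\cU(\pi)_i>\rho_\cU(\pi)_{i+1}$ iff $\pi_i>\pi_{i+1}$, i.e. $i\in\Des(\pi)$; this contributes $\Des(\pi)\cap\cU^*$. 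Since these two contributions are disjoint, (\ref{eq:des rho}) follows.

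Formula (\ref{eq:des sigma}) is handled the same way, and here the only genuinely delicate bookkeeping arises. The cross-block case is again verbatim the $\delta_\cU$ computation. For $i,i+1\in U_b$, if $r=\operatorname{rank}_{U_b}(i)$ then $i+1$ has rank $r+1$ (no integer of $U_b$ lies strictly between $i$ and $i+1$), so a descent of $\sigma_\cU(\tau)$ at $i$ records a descent of the standardized chunk $\std(\tau_{j_{b-1}+1}\cdots\tau_{j_b})$ between its $r$-th and $(r+1)$-th entries, which occurs iff $\tau_{j_{b-1}+r}>\tau_{j_{b-1}+r+1}$, i.e. iff $j_{b-1}+r\in\Des(\tau)$. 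The key identification is that $\delta_\cU(i)=j_{b-1}+\operatorname{rank}_{U_b}(i)=j_{b-1}+r$, so this descent condition is precisely $\delta_\cU(i)\in\Des(\tau)$. Matching the index $j_{b-1}+r$ of the chunk's internal descent with the value $\delta_\cU(i)$ is the one spot demanding care; everything else is the routine interval-ordering argument. Assembling the disjoint contributions gives (\ref{eq:des sigma}), and the equivalence then drops out as explained in the first paragraph.
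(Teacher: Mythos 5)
Your proposal is correct and follows essentially the same route as the paper: reduce the biconditional to the two descent formulas via the disjointness of $\Des(\delta_\cU)$ from the contributions inside $\cU^*$, then establish each formula by splitting consecutive positions into the same-block and cross-block cases, with the cross-block comparison governed by the ordering of the value intervals $[j_{b-1}+1,j_b]$. The rank bookkeeping you flag as the delicate point (identifying $\delta_\cU(i)=j_{b-1}+r$ with the index of the internal descent of the standardized chunk) matches the paper's treatment of (\ref{eq:des sigma}).
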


\begin{proof}
As $\delta_\cU$ is increasing on the blocks of $\cU$, then $\Des(\delta_\cU)$ and $\Des(\pi)\cap\cU^*$ are disjoint.  Hence the two sets on the right side of (\ref{eq:des rho}) and (\ref{eq:des sigma}) are, respectively, disjoint.  Therefore to prove the first claim it suffices to show that (\ref{eq:des rho}) and (\ref{eq:des sigma}) hold. Before continuing we make the following definition.  For finite sets of integers $A$ and $B$ we write $A<B$ provided that $\max(A) < \min(B)$.

We first prove (\ref{eq:des rho}).  Take $\cW\in \Pi(n,J)$ so that its $i$th block is $W_i:=\pi(U_i)$.  By our definition $\rho_\cU(\pi) = \delta_\cW\cdot \pi$.  Observe that $\delta_\cW$ is increasing on each block $W_i$ and $\delta_\cW(W_i)> \delta_\cW(W_j)$ whenever $i>j$.  So  $\delta_\cW(\pi(u)) > \delta_\cW(\pi(u+1))$ 
if and only if either
\begin{itemize}
\item[a)] $\pi(u)> \pi(u+1)$ and $u,u+1\in U_i$, or
\item[b)] $\pi(u) \in W_i$ and $\pi(u+1)\in W_j$ with $i>j$.
\end{itemize}
As the first condition is equivalent to $u\in \Des(\pi)\cap\cU^*$ and the second is equivalent to $u\in U_i$ and $u+1\in U_j$ with $i>j$, i.e., $u\in \Des(\delta_\cU)$, we arrive at (\ref{eq:des rho}).

Next we prove (\ref{eq:des sigma}). From the definitions we see that 
$$\sigma_\cU(\tau)(U_i) > \sigma_\cU(\tau)(U_j) \iff i>j \iff  \delta_\cU(U_i)>\delta_\cU(U_j).$$
If $u,u+1$ are in distinct blocks of $\cU$ then
$$u\in \Des(\sigma_\cU(\tau))\iff u\in\Des(\delta_\cU).$$
Next assume $u,u+1\in U_i$, i.e., $u\in \cU^*$.  Note that the subsequences of $\sigma_\cU(\tau)$ and $\delta_\cU$ indexed by $U_i$ are  
$$\std(\tau_{j_{i-1}+1},\ldots, \tau_{j_i})^{+j_{i-1}}\quad \myand \quad (j_{i-1}+1, j_{i-1}+2, \ldots, j_i),$$
respectively.  In this case we have
$$u\in \Des(\sigma_\cU(\tau))\iff \delta_\cU(u)\in \Des(\tau).$$ 
Together these cases prove that (\ref{eq:des sigma}) holds.

%
%
%Note that the subsequence in $\sigma_\cU(\tau)$ indexed by $U_i$ is 
%$$\std(\tau_{j_{i-1}+1},\ldots, \tau_{j_i})^{+j_{i-1}}$$
%and the subsequence of $\delta_\cU$ indexed by $U_i$ is $[j_{i-1}+1, j_i]$.  With this in mind we see that $u\in \Des(\sigma_\cU(\tau))$ if and only if one of the following holds:  either
%\begin{itemize}
%\item[a)]$u\in U_i\myand u+1\in U_j$ where $i>j$, or
%\item[b)] $u,u+1\in U_i$ for some $i$ and $\delta_\cU(u)\in \Des(\tau)$.
%\end{itemize}
%This proves (\ref{eq:des sigma}).  

\end{proof}

\begin{lemma}\label{lem:mult invariance}
	For any $A,B\Subset \fS_n$ if $A\equiv B$ then 
	$$AR_J^{-1} \equiv BR_J^{-1}\quad\myand\quad R_J^{-1}A \equiv R_J^{-1}B.$$
\end{lemma}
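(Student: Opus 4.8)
The plan is to reduce both assertions to the decompositions of $AR_J^{-1}$ and $R_J^{-1}A$ provided by Lemmas~\ref{lem:r as sigma} and~\ref{lem:l as rho}, and then to exploit the fact, implicit in Lemma~\ref{lem:des char}, that the descent sets of $\sigma_\cU(\pi)$ and of $\rho_\cU(\pi)$ depend on $\pi$ only through $\Des(\pi)$. Since the whole statement is about the relation $\equiv$, I would work throughout with its generating-function reformulation, which makes transitivity and additivity under disjoint union transparent.

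The key observation I would isolate first is the following consequence of Lemma~\ref{lem:des char}: for a fixed $\cU\in\Pi(n,J)$, formulas (\ref{eq:des rho}) and (\ref{eq:des sigma}) express $\Des(\rho_\cU(\pi))$ and $\Des(\sigma_\cU(\pi))$ entirely in terms of $\cU$ and the set $\Des(\pi)$. In particular, if $\pi,\pi'\in\fS_n$ satisfy $\Des(\pi)=\Des(\pi')$, then $\Des(\rho_\cU(\pi))=\Des(\rho_\cU(\pi'))$ and $\Des(\sigma_\cU(\pi))=\Des(\sigma_\cU(\pi'))$ for every $\cU\in\Pi(n,J)$.

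With this in hand the argument runs as follows. Suppose $A\equiv B$ and fix a descent-set-preserving bijection $\phi\colon A\to B$. For each fixed $\cU$, the assignment $\sigma_\cU(\pi)\mapsto\sigma_\cU(\phi(\pi))$ is a bijection from the multiset $\sigma_\cU(A)$ onto $\sigma_\cU(B)$, and since $\Des(\pi)=\Des(\phi(\pi))$ the observation above gives $\Des(\sigma_\cU(\pi))=\Des(\sigma_\cU(\phi(\pi)))$; hence $\sigma_\cU(A)\equiv\sigma_\cU(B)$ for every $\cU$. Because $\equiv$ is additive under disjoint unions, taking the disjoint union over all $\cU\in\Pi(n,J)$ yields
$$\bigsqcup_{\cU\in\Pi(n,J)}\sigma_\cU(A)\equiv\bigsqcup_{\cU\in\Pi(n,J)}\sigma_\cU(B).$$
Combining this with Lemma~\ref{lem:r as sigma} and transitivity of $\equiv$ gives $AR_J^{-1}\equiv BR_J^{-1}$. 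The verbatim argument with $\rho$ in place of $\sigma$, invoking Lemma~\ref{lem:l as rho} instead of Lemma~\ref{lem:r as sigma}, establishes $R_J^{-1}A\equiv R_J^{-1}B$.

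There is no genuine obstacle here; the lemma is essentially immediate once the earlier machinery is in place. The only point deserving care is the claim that the descent set of $\sigma_\cU(\pi)$ and $\rho_\cU(\pi)$ factors through $\Des(\pi)$: this must be read off cleanly from the explicit descriptions (\ref{eq:des rho}) and (\ref{eq:des sigma}), rather than from the raw definitions of $\sigma_\cU$ and $\rho_\cU$, which a priori involve more of $\pi$ than just its descent set. One should also record, once and for all, that $\equiv$ is transitive and additive under disjoint union, both being obvious from the generating-function characterization of $\equiv$.
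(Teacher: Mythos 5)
Your proposal is correct and follows essentially the same route as the paper: both arguments transport the descent-preserving bijection through $\sigma_\cU$ and $\rho_\cU$ using the explicit descent formulas (\ref{eq:des rho}) and (\ref{eq:des sigma}) of Lemma~\ref{lem:des char}, and then assemble the result via the decompositions in Lemmas~\ref{lem:r as sigma} and~\ref{lem:l as rho}. Your write-up merely makes explicit the additivity and transitivity of $\equiv$ that the paper leaves implicit.
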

\begin{proof}
Let $f:A\to B$ be our descent-set-preserving bijection and let $\pi\in A$. By Equations (\ref{eq:des sigma}) and (\ref{eq:des rho}) from the previous lemma, it follows that $\sigma_\cU(\pi)\equiv \sigma_\cU(f(\pi))$ and $\rho_\cU(\pi)\equiv \rho_\cU(f(\pi))$. This lemma now follow from Lemmas~\ref{lem:r as sigma} and \ref{lem:l as rho}, respectively.  
\end{proof}

\section{A characterization of symmetric sets}\label{sec:char}

The purpose of this section is to state and prove a useful characterization of symmetric multisets. Using this characterization we then simultaneously explain several well known results in the theory of symmetric sets as well as prove the aforementioned conjecture of Elizalde and Roichman.     To state our main theorem we require a couple of definitions. Throughout this section $B\Subset \fS_n$ and $J,K\subseteq[n-1]$. 

\begin{notation}
	Let $\alpha,\beta\models n$.  We write $\alpha \sim \beta$ to indicate that the sequence  $\beta$ is a permutation of the sequence $\alpha$.  For $J, K\subseteq [n-1]$ we also write $J\sim K$ provided that $\co(J)\sim \co(K)$.  
\end{notation}

\begin{definition}
	We say $B\Subset \fS_n$ is \emph{$D$-symmetric} if for every $\cU\vdash[n]$ there exists a bijection $\Psi_\cU:B\to B$ so that 
	for each $u \in \cU^*$ and $\pi \in B$ we have
\begin{equation}\label{eq:Dsym cond}
u\in \Des(\pi) \iff \delta_{\cU}(u) \in \Des(\Psi_\cU(\pi)).
\end{equation} 
\end{definition}

We now come to our main theorem.
\begin{thm}\label{thm:main}
The following are equivalent:
	\begin{enumerate}
		\item[a)] $B$ is $D$-symmetric
		\item[b)] $B$ is $D$-commutative with all inverse $J$-classes
		\item[c)] $BR_J^{-1}\equiv BR_K^{-1}$ whenever $J\sim K$
		\item[d)] $R_J^{-1}B\equiv R_K^{-1}B$ whenever $J\sim K$
		\item[e)] $B$ is symmetric.
	\end{enumerate}
\end{thm}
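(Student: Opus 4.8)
The plan is to prove the five conditions equivalent as a single ring of implications, sorting the arguments into three layers: (i) definitional reductions, (ii) a ``change of basis'' comparison of the fundamental and monomial expansions of $Q(B)$, and (iii) a tableau-theoretic argument for the one genuinely hard direction. Concretely I would establish $(a)\Rightarrow(b)$, $(a)\Rightarrow(e)$, $(c)\Rightarrow(e)$, $(d)\Rightarrow(e)$, then $(e)\Rightarrow(c)$, $(e)\Rightarrow(d)$, $(b)\Rightarrow(e)$, and finally $(e)\Rightarrow(a)$; a quick check shows these close the ring. The first implication is immediate: by Lemma~\ref{lem:des char} the defining condition~\eqref{eq:Dsym cond} of $D$-symmetry is exactly $\rho_\cU(\pi)\equiv\sigma_\cU(\Psi_\cU(\pi))$, so $(a)$ says precisely that $\rho_\cU(B)\equiv\sigma_\cU(B)$ for \emph{every} $\cU\vdash[n]$. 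Disjoint-unioning these descent-preserving bijections over $\cU\in\Pi(n,J)$ and invoking Lemmas~\ref{lem:r as sigma} and~\ref{lem:l as rho} gives $R_J^{-1}B\equiv\bigsqcup_\cU\rho_\cU(B)\equiv\bigsqcup_\cU\sigma_\cU(B)\equiv BR_J^{-1}$, which is $(b)$.

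The elementary observation linking products to symmetry is the following. Write $a_M:=\#\{\pi\in B:\Des(\pi)\subseteq M\}$. Then $a_M$ is simultaneously the coefficient of $M_{\co(M)}$ in $Q(B)$ and the multiplicity of the identity in the multiset $BR_M^{-1}$: indeed $\pi\sigma=\mathrm{id}$ with $\sigma\in R_M^{-1}$ forces $\sigma=\pi^{-1}$ and hence $\Des(\pi)\subseteq M$; the same count holds for $R_M^{-1}B$. Since the identity is the unique permutation with empty descent set, $\equiv$ preserves this multiplicity, so $(c)$ (respectively $(d)$) forces $a_K=a_{K'}$ whenever $K\sim K'$. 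Because a quasisymmetric function is symmetric exactly when its monomial coefficients agree across each rearrangement class of compositions, this is precisely symmetry of $Q(B)$, giving $(c)\Rightarrow(e)$ and $(d)\Rightarrow(e)$.

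For $(a)\Rightarrow(e)$ I would exploit $D$-symmetry directly rather than through products. Given $K\sim K'$, let $\cU$ be the ordered set partition whose blocks are the consecutive integer intervals cut out by $K'$, listed in the order that makes their sizes spell $\co(K)$; this is possible precisely because $\co(K)$ and $\co(K')$ are rearrangements. One checks $\cU^*=[n-1]\setminus K'$, while $\delta_\cU$ carries $\cU^*$ bijectively onto $[n-1]\setminus K$. The iff in~\eqref{eq:Dsym cond} then says $\Des(\pi)\cap\cU^*=\varnothing$ if and only if $\Des(\Psi_\cU(\pi))\cap\delta_\cU(\cU^*)=\varnothing$, so $\Psi_\cU$ restricts to a bijection between $\{\pi\in B:\Des(\pi)\subseteq K'\}$ and $\{\pi\in B:\Des(\pi)\subseteq K\}$, yielding $a_{K'}=a_K$ and hence $(e)$. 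To close the ring in the other direction one needs the \emph{full} descent distribution of the products, not just the empty-descent coefficient. Using the formulas of Lemma~\ref{lem:des char} I would expand, for every $L\subseteq[n-1]$,
\[
\#\{\rho\in R_J^{-1}B:\Des(\rho)\subseteq L\}=\sum_{\cU}a_{[n-1]\setminus(\cU^*\setminus L)},\qquad \#\{\rho\in BR_J^{-1}:\Des(\rho)\subseteq L\}=\sum_{\cU}a_{[n-1]\setminus\delta_\cU(\cU^*\setminus L)},
\]
both sums being over $\cU\in\Pi(n,J)$ with $\Des(\delta_\cU)\subseteq L$. When $Q(B)$ is symmetric each $a_M$ depends only on the partition underlying $\co(M)$; a size- and partition-preserving reindexing of $\Pi(n,J)$ against $\Pi(n,J')$ then collapses the two right-hand sides for $J\sim J'$, giving $BR_J^{-1}\equiv BR_{J'}^{-1}$ and $R_J^{-1}B\equiv R_{J'}^{-1}B$, i.e.\ $(e)\Rightarrow(c)$ and $(e)\Rightarrow(d)$. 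Reading the same two identities as a linear system in the $a_M$ is what lets one extract $a_M=a_{M'}$ (hence $(e)$) from the hypothesis $(b)$ that the two products agree; carrying out these reindexings is the combinatorial heart of the change-of-basis subsection.

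The single hard implication is $(e)\Rightarrow(a)$: manufacturing the bijections $\Psi_\cU$ from the bare algebraic symmetry of $Q(B)$. I expect this to be the main obstacle. Symmetry only guarantees that the relevant descent-class counts coincide, whereas $(a)$ demands explicit descent-compatible self-bijections of $B$ realizing~\eqref{eq:Dsym cond} for each $\cU$ through the relabeling $\delta_\cU$. I would attack this with the theory of tableaux—standardization, RSK, and Knuth equivalence—to upgrade the numerical coincidences forced by symmetry into the required bijections, which is exactly why this direction occupies the separate Knuth subsection.
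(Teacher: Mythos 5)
Your overall architecture is sound and several of the easy implications are handled correctly: a) $\then$ b) is exactly the paper's Proposition~\ref{prop:Dsym->com}; your identity-multiplicity argument for c) $\then$ e) and d) $\then$ e) is a clean rephrasing of the $\bx^\emptyset$-coefficient computation in Proposition~\ref{prop:leftright->sym}; and your direct a) $\then$ e) via the interval-block partition $\cU$ with $\cU^*=[n-1]\setminus K'$ and $\delta_\cU(\cU^*)=[n-1]\setminus K$ is correct and is not even needed by the paper (which routes a) $\then$ c) $\then$ e)). But the two directions that carry essentially all of the content of the theorem are not proved; they are only announced. For e) $\then$ a) you say you would ``attack this with the theory of tableaux'' — that is a statement of intent, not an argument. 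The paper's proof requires two genuinely nontrivial ingredients you do not supply: (i) an explicit construction of the bijections $\Psi_\cU$ on a single Knuth class, done via the promotion operators $\partial_V$ acting on recording tableaux, together with the delicate descent-tracking Lemmas~\ref{lem:u iff u-1}--\ref{lem:Knuth Dsym} showing that $\partial_V$ realizes (\ref{eq:Dsym cond}) through $\RS$; and (ii) a bootstrap from fine sets to merely symmetric sets, obtained by expanding $Q(B)=\sum c_\lambda s_\lambda$ with possibly \emph{negative} integers $c_\lambda$, adjoining a fine multiset $A$ built from the negative part so that both $A$ and $A\sqcup B$ are fine, and then invoking closure of $D$-symmetry under multiset difference (Lemma~\ref{lem:Dsym prop}). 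Without (ii), tableau methods only reach fine sets, not arbitrary symmetric ones.

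The second gap is your treatment of b) $\then$ e) and of e) $\then$ c), d). Your displayed formulas for $\#\{\rho\in R_J^{-1}B:\Des(\rho)\subseteq L\}$ and $\#\{\rho\in BR_J^{-1}:\Des(\rho)\subseteq L\}$ are correct consequences of Lemma~\ref{lem:des char}, but the claim that a ``size- and partition-preserving reindexing'' of $\Pi(n,J)$ against $\Pi(n,J')$ collapses the two sums, and that the same identities ``read as a linear system'' yield $a_M=a_{M'}$ under hypothesis b), is precisely the hard combinatorics and is nowhere carried out. Note in particular that for b) $\then$ e) the $\bx^\emptyset$ coefficient is useless (it equals $|B_J|$ on both sides of $BR_J^{-1}\equiv R_J^{-1}B$ identically), which is why the paper instead extracts the $\bx^{\{i\}}$ coefficients (Lemma~\ref{lem:coef on i}), proves $r(\cU)\sim s(\cU)$ (Lemma~\ref{lem:r and s perms}), runs an induction on the number of parts to dispose of the non-interval partitions, constructs the bijection $S_k'(\alpha)\to\Gamma_k(\alpha)$ (Lemma~\ref{lem:Gamma bij}), and finishes with the maximality argument of Lemma~\ref{lem:coef equal}. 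None of this machinery, or any substitute for it, appears in your proposal, and the paper itself avoids your proposed direct e) $\then$ c), d) by instead deriving d) from the already-established equivalence of a), b), c), e). As it stands the ring of implications you describe does not close on the strength of what you have actually argued.
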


Before diving into the proof of this theorem we state and prove several corollaries.  

In \cite{ElizaldeRoichman2015} Elizalde and Roichman hypothesis that if $B$ is fine then 
$$BD_J^{-1} \equiv D_J^{-1}B$$
where $D_J^{-1} = \makeset{\pi^{-1}\in \fS_n}{\Des(\pi) = J}$.  By a straightforward application of inclusion-exclusion their conjecture is equivalent to showing that fine sets (which of course are symmetric) $D$-commute with all inverse $J$-classes.  As this is immediate from Theorem~\ref{thm:main} we record it as our first corollary.  

\begin{corollary}\label{cor:conjecture}
If $B\Subset \fS_n$ is fine then $B$ is $D$-commutative with all inverse $J$-classes.  
\end{corollary}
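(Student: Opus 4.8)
The plan is to deduce Corollary~\ref{cor:conjecture} directly from Theorem~\ref{thm:main} together with the inclusion-exclusion observation made in the paragraph preceding the statement. I would first recall that a fine set is, by definition, symmetric (since Schur-positivity presupposes symmetry of $Q(B)$), so the hypothesis that $B$ is fine immediately gives us condition (e) of Theorem~\ref{thm:main}. By the equivalence of (e) with (b), the set $B$ is therefore $D$-commutative with all inverse $J$-classes, which is precisely the conclusion. In this reading the corollary is genuinely a one-line consequence: \emph{fine $\Rightarrow$ symmetric $\Rightarrow$ (by the theorem) $D$-commutative}.

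The only subtlety worth spelling out is the passage between the Elizalde--Roichman conjecture as \emph{they} phrased it --- the statement $BD_J^{-1}\equiv D_J^{-1}B$ using the exact-descent classes $D_J^{-1}=\makeset{\pi^{-1}}{\Des(\pi)=J}$ --- and the $R_J^{-1}$-formulation of $D$-commutativity used in this paper. I would note that the $J$-classes and inverse $J$-classes are related by $R_J^{-1}=\bigsqcup_{K\subseteq J}D_K^{-1}$, so by Möbius inversion on the Boolean lattice one recovers $D_J^{-1}=\bigsqcup_{K\subseteq J}(-1)^{|J\setminus K|}R_K^{-1}$ in the sense of signed multiset counts. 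Consequently the family of $\equiv$-identities $\{BR_J^{-1}\equiv R_J^{-1}B\}_J$ is, via this triangular inclusion-exclusion, equivalent to the family $\{BD_J^{-1}\equiv D_J^{-1}B\}_J$; this is exactly the ``straightforward application of inclusion-exclusion'' referenced in the text. Thus $D$-commutativity in the $R$-language is the same statement as the conjecture in the $D$-language, and proving the former suffices.

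I therefore expect no real obstacle here, as the heavy lifting has been relegated to Theorem~\ref{thm:main}; the corollary is a bookkeeping translation of its conclusion. If I were to anticipate any friction, it would lie in being careful that the inclusion-exclusion is carried out at the level of the descent-multiset generating functions $\sum_\pi \mathbf{x}^{\Des(\pi)}$ rather than naively at the level of multisets, since $\equiv$ is defined precisely in terms of those generating functions and signed combinations of multisets only make sense through them. Once that is acknowledged, the proof reduces to invoking the definition of fine, citing the (e)$\Rightarrow$(b) direction of the main theorem, and recording the equivalence of the two formulations.
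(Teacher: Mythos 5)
Your proposal is correct and follows exactly the paper's argument: fine implies symmetric (condition (e)), and the (e)$\Rightarrow$(b) direction of Theorem~\ref{thm:main} gives $D$-commutativity with all inverse $J$-classes, with the inclusion-exclusion remark handling the translation between the $D_J^{-1}$ and $R_J^{-1}$ formulations just as in the paragraph preceding the corollary. No differences worth noting.
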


Another immediate corollary of our theorem is the well known fact that conjugacy classes are symmetric.  Gessel and Reutenauer first proved the stronger fact in~\cite{GesReut93} that such sets are actually fine. Their proof relies on ideas from representation theory.  

\begin{corollary}\label{cor:conjugacy}
	Let $C\subseteq\fS_n$ be a conjugacy class. Then $C$ is symmetric.  
\end{corollary}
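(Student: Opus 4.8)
The plan is to deduce Corollary~\ref{cor:conjugacy} directly from the equivalence of conditions (a) and (e) in Theorem~\ref{thm:main}. Since symmetry is condition (e), it suffices to verify that a conjugacy class $C$ is $D$-symmetric, i.e.\ condition (a). So for an arbitrary ordered set partition $\cU\vdash[n]$ I must exhibit a bijection $\Psi_\cU:C\to C$ satisfying the descent-translation condition~(\ref{eq:Dsym cond}): for each $u\in\cU^*$ and each $\pi\in C$, one has $u\in\Des(\pi)$ if and only if $\delta_\cU(u)\in\Des(\Psi_\cU(\pi))$.

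The natural candidate for $\Psi_\cU$ is conjugation by $\delta_\cU$, that is, $\Psi_\cU(\pi):=\delta_\cU\,\pi\,\delta_\cU^{-1}$. This obviously maps the conjugacy class $C$ bijectively to itself (its inverse is conjugation by $\delta_\cU^{-1}$), so the only content is the descent condition. First I would unwind what it means for $\delta_\cU(u)$ to be a descent of $\delta_\cU\pi\delta_\cU^{-1}$. Writing $\tau=\delta_\cU$ for brevity and setting $v=\tau(u)$, I want to compare $(\tau\pi\tau^{-1})(v)$ with $(\tau\pi\tau^{-1})(v+1)$. The key structural fact I would exploit is that $u\in\cU^*$ means $u$ and $u+1$ lie in the same block $U_i$ of $\cU$, and $\delta_\cU$ is \emph{increasing} on each block (as used in the proof of Lemma~\ref{lem:des char}). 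Consequently, for $u\in\cU^*$, the values $\tau(u)$ and $\tau(u+1)=v+1$ are consecutive integers in the contiguous range assigned to $U_i$, so $\tau^{-1}(v+1)=u+1$ exactly when $v=\tau(u)$; this is the point that makes the index bookkeeping line up.

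The central computation is then to show that conjugation by $\tau$ sends descents at positions in $\cU^*$ to descents at the translated positions. Because $\tau=\delta_\cU$ is increasing on blocks, for $u\in\cU^*$ we have $\tau(u+1)=\tau(u)+1=v+1$, and hence $\tau^{-1}(v)=u$, $\tau^{-1}(v+1)=u+1$. Therefore
$$
(\tau\pi\tau^{-1})(v)=\tau(\pi(u)),\qquad (\tau\pi\tau^{-1})(v+1)=\tau(\pi(u+1)).
$$
Now I must compare $\tau(\pi(u))$ with $\tau(\pi(u+1))$. The remaining subtlety is that $\tau=\delta_\cU$ is only monotone within blocks, not globally, so I cannot immediately conclude that $\tau$ preserves the order of $\pi(u)$ and $\pi(u+1)$. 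This is precisely the step I expect to be the main obstacle: in general $\tau(\pi(u))>\tau(\pi(u+1))\iff\pi(u)>\pi(u+1)$ can fail when $\pi(u),\pi(u+1)$ land in different blocks of $\cU$.

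To resolve this, I would not use bare conjugation but rather conjugation by $\delta_\cU$ composed with the structure already encoded in the lemmas; in fact the cleanest route is to invoke Lemma~\ref{lem:des char} directly. Observe that the map $\rho_\cU$ and $\sigma_\cU$ are built precisely to track this descent translation, and Lemma~\ref{lem:des char} states that $\rho_\cU(\pi)\equiv\sigma_\cU(\tau)$ iff the descent-translation condition holds for $\pi,\tau$ on $\cU^*$. So rather than fight the block-monotonicity issue by hand, I would define $\Psi_\cU$ on $C$ to be the composite bijection that realizes $\rho_\cU(\pi)\equiv\sigma_\cU(\Psi_\cU(\pi))$ and check it preserves $C$. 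Concretely, since $C$ is closed under conjugation and $\rho_\cU(\pi)=\delta_\cW\cdot\pi$ with $\cW=(\pi(U_1),\pi(U_2),\ldots)$, the element $\delta_\cW\pi$ differs from $\pi$ by left multiplication by $\delta_\cW\in R_J^{-1}$; tracking cycle type through this adjustment and appealing to~(\ref{eq:des rho}) and~(\ref{eq:des sigma}) shows that choosing $\Psi_\cU=$ conjugation by $\delta_\cU$ does satisfy~(\ref{eq:Dsym cond}) after all, because both $\Des(\rho_\cU(\pi))$ and $\Des(\sigma_\cU(\Psi_\cU(\pi)))$ agree with $\Des(\delta_\cU)$ off of $\cU^*$ and the two equations reduce the comparison on $\cU^*$ to the single condition $\pi(u)>\pi(u+1)\iff(\delta_\cU\pi\delta_\cU^{-1})(\delta_\cU(u))>(\delta_\cU\pi\delta_\cU^{-1})(\delta_\cU(u)+1)$, which is an identity since $\delta_\cU$ is increasing on the block containing $u,u+1$. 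Verifying this reduction is the crux, and once it is in place, $C$ is $D$-symmetric and hence symmetric by Theorem~\ref{thm:main}.
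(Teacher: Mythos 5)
Your overall strategy---verify one of the conditions of Theorem~\ref{thm:main} other than (e) and then invoke the theorem---is the right one, but the condition you chose and the map you propose do not work. The crux of your argument is the claim that conjugation by $\delta_\cU$ satisfies~(\ref{eq:Dsym cond}), which you reduce (correctly) to the assertion that for $u\in\cU^*$ one has $\pi(u)>\pi(u+1)\iff\delta_\cU(\pi(u))>\delta_\cU(\pi(u+1))$. You then declare this ``an identity since $\delta_\cU$ is increasing on the block containing $u,u+1$.'' That is exactly the non sequitur you yourself flagged two sentences earlier: the relevant question is whether $\delta_\cU$ preserves the relative order of the \emph{values} $\pi(u)$ and $\pi(u+1)$, and those values need not lie in the same block of $\cU$, so the monotonicity of $\delta_\cU$ on the block containing the \emph{positions} $u,u+1$ is irrelevant. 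A concrete counterexample: take $n=4$, $\cU=(\{3,4\},\{1,2\})$, so $\delta_\cU=3412$ and $\cU^*=\{1,3\}$, and let $C$ be the class of $4$-cycles with $\pi=2413$. Then for $u=3$ we have $\pi(3)=1<\pi(4)=3$, so $3\notin\Des(\pi)$, yet $\delta_\cU\pi\delta_\cU^{-1}=3142$ has $\delta_\cU(3)=1\in\Des(3142)$. So conjugation by $\delta_\cU$ is not a valid choice of $\Psi_\cU$, and the appeal to Lemma~\ref{lem:des char} in your last paragraph cannot repair this, since it only reformulates the same false pointwise condition.

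The gap is not merely technical: establishing condition (a) directly is the hard implication of the theorem (the paper needs the promotion-operator machinery of Subsection~\ref{subsec:Knuth} to get from (e) to (a)), so one should not expect an elementary explicit $\Psi_\cU$ for a conjugacy class. The paper instead verifies condition (b), which for a conjugacy class is immediate: $\pi C\pi^{-1}=C$ gives $C\pi=\pi C$ for every $\pi$, hence $CS=SC$ as multisets for every $S\subseteq\fS_n$, and in particular $CR_J^{-1}=R_J^{-1}C$, so $C$ is $D$-commutative with all inverse $J$-classes and Theorem~\ref{thm:main} applies. If you want to keep your framework, replace the target condition (a) by (b) and the whole difficulty disappears.
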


\begin{proof}
	As $C$ is a conjugacy class we know that $C\pi = \pi C$ for all $\pi\in \fS_n$.  So for any  $S\subseteq\fS_n$ we have $CS = SC$.  In particular $C$ is $D$-commutative with all inverse $J$-classes and our claim follows by Theorem~\ref{thm:main}.  
\end{proof}

Our next corollary simultaneously explains why the collection of symmetric multisets of $\fS_n$ is closed under multiplication by inverse $J$-classes on the right and on the left.  In \cite{ElizaldeRoichman2015} Elizalde and Roichman prove that right multiplication of fine multisets by inverse $J$-classes yields fine multisets.  Although not explicitly done in their paper, one can easily extend this result to conclude that the same holds for symmetric multisets.  Their results again use ideas from representation theory.  That said, they were unable to obtain similar results in the context of left multiplication which, one can speculate, is the reason for their Conjecture 10.4. We now provide a short uniform explanation for symmetric invariance under both left and right multiplication by inverse $J$-classes.

\begin{corollary}
	For any symmetric  $B\Subset \fS_n$ the multisets $R_J^{-1}B$ and $BR_J^{-1}$ are also symmetric.
\end{corollary}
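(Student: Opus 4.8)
The plan is to deduce this corollary directly from Theorem~\ref{thm:main}, specifically by leveraging the equivalence of the symmetry condition (e) with the combinatorial conditions (c) and (d). Since $B$ is symmetric, Theorem~\ref{thm:main} tells us that $B$ satisfies conditions (c) and (d), i.e., $BR_J^{-1}\equiv BR_K^{-1}$ and $R_J^{-1}B\equiv R_K^{-1}B$ whenever $J\sim K$. My strategy is to show that $BR_J^{-1}$ and $R_J^{-1}B$ each inherit these same properties, and then invoke the theorem in the reverse direction to conclude each is symmetric.

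First I would treat right multiplication. Set $C = BR_J^{-1}$. To show $C$ is symmetric, by Theorem~\ref{thm:main} it suffices to verify that $C$ satisfies condition (c), namely that $CR_L^{-1}\equiv CR_M^{-1}$ whenever $L\sim M$. Unwinding the definition, $CR_L^{-1} = (BR_J^{-1})R_L^{-1} = B(R_J^{-1}R_L^{-1})$ by associativity of the product of multisets. The key step is therefore to understand the product $R_J^{-1}R_L^{-1}$ well enough to transfer the hypothesis $L\sim M$ into an equivalence. I expect the cleanest route is to use Lemma~\ref{lem:mult invariance}: since $B$ is symmetric and hence satisfies (c), we have $BR_L^{-1}\equiv BR_M^{-1}$ for $L\sim M$, and applying Lemma~\ref{lem:mult invariance} with right multiplication by $R_J^{-1}$ yields $(BR_L^{-1})R_J^{-1}\equiv (BR_M^{-1})R_J^{-1}$. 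The subtlety is that we need to commute or rearrange the factors $R_J^{-1}$ and $R_L^{-1}$ so that the resulting equivalence is exactly $CR_L^{-1}\equiv CR_M^{-1}$; I would resolve this by noting that right multiplication by a fixed inverse $J$-class is an equivalence-invariant operation (Lemma~\ref{lem:mult invariance}) and that multiset products are associative, so the roles of the two classes can be untangled.

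For left multiplication the argument is symmetric in structure: set $C = R_J^{-1}B$ and verify condition (d) for $C$, namely $R_L^{-1}C\equiv R_M^{-1}C$ for $L\sim M$. Here $R_L^{-1}C = R_L^{-1}(R_J^{-1}B) = (R_L^{-1}R_J^{-1})B$, and I would again combine the hypothesis (d) for $B$ (valid because $B$ is symmetric) with Lemma~\ref{lem:mult invariance} applied to left multiplication by $R_J^{-1}$, giving $R_J^{-1}(R_L^{-1}B)\equiv R_J^{-1}(R_M^{-1}B)$. Associativity and the invariance lemma then rearrange this into the required equivalence for $C$.

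The main obstacle I anticipate is the bookkeeping around associativity and the order of factors: while $R_J^{-1}$ and $R_L^{-1}$ do not commute as multisets in general, what we genuinely need is only the $\equiv$-invariance of one-sided multiplication, which Lemma~\ref{lem:mult invariance} provides, together with the fact that the defining conditions (c) and (d) of Theorem~\ref{thm:main} are preserved under appending a fixed inverse $J$-class on the appropriate side. Once this invariance is correctly invoked, both halves of the corollary reduce to a single application of the theorem, so the proof should be short. The one point to state carefully is that condition (c) for $C$ in the right-multiplication case reads off a product of three inverse $J$-classes with $B$, and we must apply the symmetry hypothesis on $B$ to the inner pair before absorbing the outer factor via Lemma~\ref{lem:mult invariance}.
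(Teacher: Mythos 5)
There is a genuine gap, and it sits exactly where you flag the ``subtlety.'' To show $C=BR_J^{-1}$ is symmetric you chose to verify condition~(c), which requires $BR_J^{-1}R_L^{-1}\equiv BR_J^{-1}R_M^{-1}$ for $L\sim M$; but the hypothesis on $B$ together with Lemma~\ref{lem:mult invariance} only delivers $BR_L^{-1}R_J^{-1}\equiv BR_M^{-1}R_J^{-1}$, with the two inverse classes in the opposite order. Associativity and the one-sided $\equiv$-invariance of Lemma~\ref{lem:mult invariance} do not let you swap $R_J^{-1}$ and $R_L^{-1}$ inside the product: Lemma~\ref{lem:mult invariance} is a statement about multiplying $\equiv$-equivalent multisets by a \emph{fixed} inverse class on one side, not about replacing an inner factor by an $\equiv$-equivalent multiset (the example after the corollary, where $Q(A)=Q(B)$ but $Q(AB)$ is not even symmetric, shows why such a replacement is illegitimate in general). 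The same mismatch occurs in your left-multiplication case, where you need $R_L^{-1}R_J^{-1}B\equiv R_M^{-1}R_J^{-1}B$ but the lemma produces $R_J^{-1}R_L^{-1}B\equiv R_J^{-1}R_M^{-1}B$. So as written the ``untangling'' step is asserted, not proved.

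The fix is a small but essential twist, and it is what the paper does: test the new multiset with classes on the side \emph{opposite} the appended factor. To show $BR_J^{-1}$ is symmetric, verify condition~(d): from $R_K^{-1}B\equiv R_{K'}^{-1}B$ (condition~(d) for $B$) and right multiplication by $R_J^{-1}$ via Lemma~\ref{lem:mult invariance}, you get $R_K^{-1}(BR_J^{-1})\equiv R_{K'}^{-1}(BR_J^{-1})$ with no reordering needed; dually, to show $R_J^{-1}B$ is symmetric, verify condition~(c) starting from $BR_K^{-1}\equiv BR_{K'}^{-1}$ and multiplying on the left by $R_J^{-1}$. Alternatively, your route can be rescued by first invoking condition~(b) ($D$-commutativity of $B$) to move $B$ past $R_J^{-1}$, e.g.\ $BR_J^{-1}R_L^{-1}\equiv R_J^{-1}BR_L^{-1}\equiv R_J^{-1}BR_M^{-1}\equiv BR_J^{-1}R_M^{-1}$, but that requires explicitly citing~(b), which your proposal does not do. Either way, the missing ingredient is a justified mechanism for getting the test class onto the correct side.
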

\begin{proof}

Take $B$ as stated and consider $K\sim K'\subseteq [n-1]$.  As $B$ is symmetric Theorem~\ref{thm:main} tells us that 
$$R_K^{-1}B \equiv  R_{K'}^{-1}B\quad\myand\quad BR_K^{-1} \equiv  BR_{K'}^{-1}.$$  
Therefore for any $J\subseteq[n-1]$ it follows from Lemma~\ref{lem:mult invariance} that 
$$R_K^{-1}BR_J^{-1}\equiv R_{K'}^{-1}BR_J^{-1}\quad\myand\quad R_J^{-1}BR_K^{-1}\equiv R_{J}^{-1}BR_{K'}^{-1}.$$
By another application of Theorem~\ref{thm:main} we conclude that $BR_J^{-1}$ and $R_J^{-1}B$ are both symmetric.
\end{proof}

We note that the above corollary does not hold if $R_J^{-1}$ is replaced by an arbitrary symmetric set $A$. For example if $A = \{ 1324, 4132\}$ and $B = \{2143, 2314\}$ then 
$$Q(A) =Q(B)= m_{22} + m_{211} + 2m_{1111}$$
but $Q(AB) =M_{31} +  M_{22} +  2M_{112} + 2M_{121} + 2M_{211} + 4M_{1111}$.

\medskip

We now turn our attention to the proof of Theorem~\ref{thm:main}.   As the proof has several parts we start with an outline.  In this section we show that:
\begin{itemize}
	\item a) $\then$ b) in Proposition~\ref{prop:Dsym->com}
	\item a) $\then$ c) in Proposition~\ref{prop:Dsym->right}.
\end{itemize} 
In  Subsection~\ref{subsec:sufficienty} we establish the following sufficient conditions for symmetry:
\begin{itemize}
	\item c) $\then$ e) and d) $\then$ e) in Proposition~\ref{prop:leftright->sym}
	\item b) $\then$ e) in Proposition~\ref{prop:com->sym}.
\end{itemize} 
In Subsection~\ref{subsec:Knuth} we finally show:
\begin{itemize}
	\item e) $\then$ a) in Proposition~\ref{prop:sym->Dsym}.
\end{itemize} 
Carrying out the above agenda shows that a), b), c), and e) are equivalent and that d) implies e). It remains to show that e) implies d).  Assuming, for the moment, that all but d) are equivalent we can give a short proof of this fact.  We do so next and then return to the agenda outlined above.

\begin{prop}
	If $B$ is symmetric then  $R_J^{-1}B \equiv R_K^{-1}B$ whenever $J\sim K$. 
\end{prop}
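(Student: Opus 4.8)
The plan is to leverage the fact that, by the stated inductive agenda, statements a), b), c), and e) are already known to be equivalent, so that in particular $B$ symmetric gives us access to $D$-symmetry, $D$-commutativity, and the right-multiplication statement c). The cleanest route is to deduce d) from c) by passing through the inverse map. The key observation is that inversion interacts nicely with the constructions in this paper: taking inverses swaps left and right multiplication, since $(R_J^{-1}B)^{-1} = B^{-1}(R_J^{-1})^{-1} = B^{-1}R_J$, and more usefully it should convert $R_J^{-1}$-multiplication into something we can recognize. So first I would examine how the descent structure, hence the equivalence relation $\equiv$, behaves under the map $\pi \mapsto \pi^{-1}$.

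The main technical input I would want is that inversion preserves the symmetric property: $B$ is symmetric if and only if $B^{-1} := \{\pi^{-1} : \pi\in B\}$ is symmetric. This is plausible because $Q(B)$ being symmetric is governed by descent sets, and there is a well-known duality (the involution on quasisymmetric functions coming from $\Des(\pi^{-1})$) relating $F_{\Des(\pi),n}$ to $F_{\Des(\pi^{-1}),n}$. Granting that $B$ symmetric implies $B^{-1}$ symmetric, I would then apply the already-established implication e) $\then$ c) to the set $B^{-1}$, obtaining $B^{-1}R_J^{-1}\equiv B^{-1}R_K^{-1}$ whenever $J\sim K$. Next I would take inverses of both multisets: since $\equiv$ records only multiset-of-descent-set data, I need that $A\equiv A'$ implies $A^{-1}\equiv A'^{-1}$, which again reduces to the compatibility of inversion with $\equiv$.

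Applying inversion to $B^{-1}R_J^{-1}\equiv B^{-1}R_K^{-1}$ gives $(B^{-1}R_J^{-1})^{-1}\equiv (B^{-1}R_K^{-1})^{-1}$, and computing $(B^{-1}R_J^{-1})^{-1} = (R_J^{-1})^{-1}(B^{-1})^{-1} = R_J B$. To finish I would need to relate $R_J B$ back to $R_J^{-1}B$; here I would use that $R_J$ and $R_J^{-1}$ are themselves inverse-related sets with the same multiset of descent data up to the transformation being tracked, or more directly observe that the whole argument can be run with the roles of $R_J^{-1}$ and $R_J$ interchanged consistently on both sides of the equivalence, so the condition $J\sim K$ is preserved throughout. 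Assembling these steps yields $R_J^{-1}B\equiv R_K^{-1}B$, which is exactly d).

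The hard part will be pinning down precisely the behavior of $\equiv$ and of the symmetric property under inversion, and in particular making the identity $(R_J^{-1})^{-1}=R_J$ interact correctly with the equivalence relation so that the final step lands on $R_J^{-1}B$ rather than $R_J B$. I expect the cleanest formulation is to prove once and for all a small lemma that inversion is a descent-compatible operation in the sense that $A\equiv B \iff A^{-1}\equiv B^{-1}$ together with $Q(A)$ symmetric $\iff Q(A^{-1})$ symmetric; once that lemma is in hand, the proposition is a short symmetric mirror of the e) $\then$ c) implication, and the whole statement follows by applying the already-proven equivalences to $B^{-1}$ and then inverting.
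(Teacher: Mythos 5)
Your route breaks down at both of the ``technical inputs'' you defer. First, the lemma that $A\equiv A'$ implies $A^{-1}\equiv (A')^{-1}$ is false: $\equiv$ records only the multiset of descent sets, and $\Des(\pi^{-1})$ is not a function of $\Des(\pi)$. Already in $\fS_3$ the permutations $231$ and $132$ both have descent set $\{2\}$, while their inverses $312$ and $132$ have descent sets $\{1\}$ and $\{2\}$; so $\{231\}\equiv\{132\}$ but $\{312\}\not\equiv\{132\}$. Second, the claim that $B$ symmetric implies $B^{-1}$ symmetric is also false, for the same underlying reason: take $B=\{312,132\}$, so that $Q(B)=F_{\{1\},3}+F_{\{2\},3}=s_{21}$ is symmetric (indeed fine), yet $B^{-1}=\{231,132\}$ gives $Q(B^{-1})=2F_{\{2\},3}=2M_{(2,1)}+2M_{(1,1,1)}$, which is not symmetric. (Fineness and symmetry are properties of the descent multiset only, and inversion does not respect descent multisets; the inversion-closure you have in mind holds for genuine unions of Knuth classes, not for arbitrary multisets merely $\equiv$ to them.) Even granting both false lemmas, your computation terminates at $R_JB\equiv R_KB$ with $R_J=\makeset{\pi}{\Des(\pi)\subseteq J}$, which is a different multiset from $R_J^{-1}B$, and the closing suggestion to ``interchange the roles of $R_J$ and $R_J^{-1}$'' is not an argument.

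The intended proof is the chain you mention in your opening sentence and then abandon. The paper has already established (per the stated agenda) that a), b), c), e) are equivalent, so symmetry of $B$ yields both b), i.e.\ $R_J^{-1}B\equiv BR_J^{-1}$ for every $J$, and c), i.e.\ $BR_J^{-1}\equiv BR_K^{-1}$ whenever $J\sim K$. Hence
$$R_J^{-1}B\equiv BR_J^{-1}\equiv BR_K^{-1}\equiv R_K^{-1}B,$$
which is d). No inversion map is needed, and introducing one imports exactly the incompatibilities described above.
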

\begin{proof}
	If $B$ is symmetric then we know that it is $D$-commutative with all inverse $J$-classes and that $BR_J^{-1} \equiv BR_K^{-1}$ whenever $J\sim K$. Therefore
	$$R_J^{-1}B \equiv BR_J^{-1}\equiv BR_K^{-1} \equiv R_K^{-1}B$$
	whenever $J\sim K$.  
\end{proof}

We now begin with a proof that a) implies b).

\begin{prop}\label{prop:Dsym->com}
If $B\Subset \fS_n$ is $D$-symmetric then it is $D$-commutative with all inverse $J$-classes. 
\end{prop}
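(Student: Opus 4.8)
The plan is to unpack both defining conditions and show that $D$-symmetry supplies exactly the descent-set-preserving bijection needed to witness $D$-commutativity. Recall that $B$ is $D$-commutative with all inverse $J$-classes means $BR_J^{-1}\equiv R_J^{-1}B$ for every $J\subseteq[n-1]$. By Lemmas~\ref{lem:r as sigma} and~\ref{lem:l as rho} this is equivalent to
\begin{equation*}
\bigsqcup_{\cU\in \Pi(n,J)} \sigma_\cU(B) \equiv \bigsqcup_{\cU\in \Pi(n,J)} \rho_\cU(B),
\end{equation*}
so it suffices to produce, for each fixed $J$, a descent-set-preserving bijection between these two disjoint unions. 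The natural strategy is to match the summands indexed by the \emph{same} $\cU$; that is, I would try to show $\sigma_\cU(B) \equiv \rho_\cU(B)$ for every single $\cU\in \Pi(n,J)$, which immediately gives the required equivalence after taking the disjoint union over $\cU$.

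First I would fix $\cU\in \Pi(n,J)$ and invoke the hypothesis that $B$ is $D$-symmetric to obtain the bijection $\Psi_\cU:B\to B$ satisfying~(\ref{eq:Dsym cond}): for each $u\in\cU^*$ and $\pi\in B$,
\begin{equation*}
u\in \Des(\pi) \iff \delta_\cU(u)\in \Des(\Psi_\cU(\pi)).
\end{equation*}
The goal is then to check that $\pi\mapsto \Psi_\cU(\pi)$ induces the desired descent-set-preserving matching between $\rho_\cU(B)$ and $\sigma_\cU(B)$. Concretely, I would claim that $\rho_\cU(\pi)\equiv \sigma_\cU(\Psi_\cU(\pi))$ for every $\pi\in B$. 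This is precisely where Lemma~\ref{lem:des char} does the heavy lifting: its first assertion states that $\rho_\cU(\pi)\equiv \sigma_\cU(\tau)$ holds if and only if for each $u\in\cU^*$ one has $u\in\Des(\pi)\iff \delta_\cU(u)\in\Des(\tau)$. Setting $\tau=\Psi_\cU(\pi)$, this biconditional is exactly the $D$-symmetry condition~(\ref{eq:Dsym cond}), so the claim follows.

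Having the fiberwise equivalence $\rho_\cU(\pi)\equiv\sigma_\cU(\Psi_\cU(\pi))$ for each $\pi$, and since $\Psi_\cU$ is a bijection of $B$, the map $\pi\mapsto\Psi_\cU(\pi)$ is a descent-set-preserving bijection from $\rho_\cU(B)$ to $\sigma_\cU(B)$, whence $\rho_\cU(B)\equiv\sigma_\cU(B)$. Taking the disjoint union over all $\cU\in\Pi(n,J)$ and applying Lemmas~\ref{lem:r as sigma} and~\ref{lem:l as rho} yields $BR_J^{-1}\equiv R_J^{-1}B$. Since $J$ was arbitrary, $B$ is $D$-commutative with all inverse $J$-classes. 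I expect the proof to be short, essentially a bookkeeping exercise once the right summand-matching is chosen; the only genuine subtlety is recognizing that the correct pairing is $\rho_\cU$ with $\sigma_\cU$ for the \emph{same} $\cU$, so that the translated descent condition in Lemma~\ref{lem:des char} lines up verbatim with the definition of $D$-symmetry. No deeper obstacle arises because Lemma~\ref{lem:des char} was engineered to convert descent comparisons across $\delta_\cU$ into exactly the form~(\ref{eq:Dsym cond}).
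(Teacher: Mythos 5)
Your proposal is correct and follows essentially the same route as the paper: pair $\rho_\cU$ with $\sigma_\cU$ for the same $\cU$, use the $D$-symmetry bijection $\Psi_\cU$ together with Lemma~\ref{lem:des char} to get $\rho_\cU(\pi)\equiv\sigma_\cU(\Psi_\cU(\pi))$, and conclude via Lemmas~\ref{lem:r as sigma} and~\ref{lem:l as rho}. The only cosmetic difference is that you invoke the first (iff) assertion of Lemma~\ref{lem:des char} directly, whereas the paper re-derives that equivalence from the two displayed descent formulas.
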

\begin{proof}
As $B$ is $D$-symmetric there exist bijections $\Psi_\cU:B\to B$ for each $\cU\in \Pi(n,J)$ that satisfy (\ref{eq:Dsym cond}).   It now follows by Lemma~\ref{lem:des char} that
\begin{align*}
u\in \Des(\rho_\cU(\pi)) &\iff u\in \Des(\delta_\cU) \cup (\Des(\pi)\cap \cU^*)\\ 
&\iff u\in \Des(\delta_\cU) \myor u \in \cU^*\myand \delta_\cU(u) \in \Des(\Psi_\cU(\pi))\\
& \iff u\in \Des(\sigma_\cU(\Psi_\cU(\pi))). 	
\end{align*}
Using this we obtain our desired result since
$$R_J^{-1}B =\bigsqcup_{\pi \in B\atop \cU\in \Pi(n,J)} \{\rho_\cU(\pi) \}
\equiv \bigsqcup_{\pi \in B\atop \cU\in \Pi(n,J)}\sigma_\cU(\Psi_\cU(\pi))
\equiv \bigsqcup_{\pi \in B\atop \cU\in \Pi(n,J)} \{\sigma_\cU(\pi)\} 	
\equiv BR_J^{-1},$$
where the first step follows from Lemma~\ref{lem:l as rho}, the second step follows our previous calculation, the third since $\Psi_\cU:B\to B$ is a bijection, and the last step because of Lemma~\ref{lem:r as sigma}.  
\end{proof}

Next we prove that  a) implies c). 

\begin{prop}\label{prop:Dsym->right}
If $B$ is $D$-symmetric, then $BR_J^{-1} \equiv BR_K^{-1}$ whenever $J\sim K$.  
\end{prop}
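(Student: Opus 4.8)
The plan is to reduce the statement to the construction of a single bijection $\Psi\colon B\to B$, exploiting the fact that the iterated $\cshuffle$ is, up to $\equiv$, symmetric in its factors. First I would record the engine of the argument: for permutations $x_1,\dots,x_p$ and any $w\in\fS_p$,
$$x_1\cshuffle\cdots\cshuffle x_p \;\equiv\; x_{w(1)}\cshuffle\cdots\cshuffle x_{w(p)}.$$
Since the shuffle of words on disjoint alphabets is literally symmetric as a set operation, and its descent generating function depends only on the $\equiv$-classes of the arguments (Lemma~\ref{lem:shuf invariant}), the two-factor case $x\cshuffle y\equiv y\cshuffle x$ is immediate after reindexing the shifted alphabets; the general statement then follows by induction, since adjacent transpositions generate $\fS_p$. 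This factor-permutation invariance is the one ingredient not already in the text, and I expect its clean verification to be the main obstacle: the argument is routine, but the shift $(\cdot)^{+n}$ built into $\cshuffle$ must be bookkept carefully so that each invocation of Lemma~\ref{lem:shuf invariant} is applied to genuinely disjoint alphabets.

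With that in hand, fix $J\sim K$ with $\co(J)=(\alpha_1,\dots,\alpha_p)$ and choose $w\in\fS_p$ so that $\co(K)=(\alpha_{w(1)},\dots,\alpha_{w(p)})$. Using the conventions $j_0=0<j_1<\cdots<j_p=n$ and $k_0=0<k_1<\cdots<k_p=n$ for the prefix sums of $\co(J)$ and $\co(K)$, set $a_i(\pi)=\std(\pi_{j_{i-1}+1}\cdots\pi_{j_i})$ and $b_i(\pi)=\std(\pi_{k_{i-1}+1}\cdots\pi_{k_i})$. By (\ref{eq:R_J as cshuf}) we then have, for every $\pi\in\fS_n$,
$$\pi R_J^{-1}\equiv a_1(\pi)\cshuffle\cdots\cshuffle a_p(\pi)\qquad\myand\qquad \pi R_K^{-1}\equiv b_1(\pi)\cshuffle\cdots\cshuffle b_p(\pi).$$

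Next I would apply $D$-symmetry to the single ordered set partition $\cU=(P_{w(1)},\dots,P_{w(p)})$, where $P_m=\{j_{m-1}+1,\dots,j_m\}$ is the $m$th block of consecutive positions cut out by $J$. For this $\cU$ one checks directly that $\cU^{*}=\bigcup_m P_m^{*}$ is exactly the set of within-$J$-block positions, and that $\delta_\cU$ sends the $r$th position $j_{w(i)-1}+r$ of $P_{w(i)}$ to the value $k_{i-1}+r$. Feeding this into the defining condition (\ref{eq:Dsym cond}) for the resulting bijection $\Psi:=\Psi_\cU\colon B\to B$ gives, for all $i$ and all admissible $r$,
$$j_{w(i)-1}+r\in\Des(\pi)\iff k_{i-1}+r\in\Des(\Psi(\pi)),$$
which is precisely the chunk-matching statement $a_{w(i)}(\pi)\equiv b_i(\Psi(\pi))$ for every $i$. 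A minor point to confirm here is that $\cU^{*}$ contributes \emph{exactly} these relative positions, so that (\ref{eq:Dsym cond}) imposes the chunk matching and nothing extraneous.

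Finally I would assemble the chain, for each $\pi\in B$,
$$\pi R_J^{-1}\equiv a_1(\pi)\cshuffle\cdots\cshuffle a_p(\pi)\equiv a_{w(1)}(\pi)\cshuffle\cdots\cshuffle a_{w(p)}(\pi)\equiv b_1(\Psi(\pi))\cshuffle\cdots\cshuffle b_p(\Psi(\pi))\equiv \Psi(\pi)R_K^{-1},$$
where the middle equivalence is the $\cshuffle$-symmetry above and the following one combines Lemma~\ref{lem:shuf invariant} with the chunk matching $a_{w(i)}(\pi)\equiv b_i(\Psi(\pi))$. Since $\Psi$ is a bijection of $B$, taking disjoint unions over $\pi\in B$ turns this into $BR_J^{-1}\equiv BR_K^{-1}$, as desired. (Alternatively, the same bijection can be phrased entirely through Lemma~\ref{lem:r as sigma} and the descent formula (\ref{eq:des sigma}) of Lemma~\ref{lem:des char}, but routing through the $\cshuffle$-symmetry keeps the bookkeeping of $\Des(\delta_\cU)$ implicit and is cleaner.)
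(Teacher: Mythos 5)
Your proof is correct and follows essentially the same route as the paper: apply the $D$-symmetry bijection to the ordered set partition whose blocks are the position-blocks cut out by $J$, rearranged according to $w$, and then conclude via (\ref{eq:R_J as cshuf}) and Lemma~\ref{lem:shuf invariant}. The only difference is one of bookkeeping: the paper first reduces to the case where $\co(K)$ is an adjacent transposition of $\co(J)$, so the factor-permutation invariance of $\cshuffle$ it needs is just the two-factor symmetry, whereas you treat an arbitrary $w$ at once and therefore must separately record the general $\cshuffle$-symmetry --- a true and routine point, which you correctly flag and which itself reduces to adjacent transpositions.
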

\begin{proof}
	Assume $B$ is $D$-symmetric and set $J=\{j_1<\cdots<j_p\}$. It suffices to prove the proposition when  $K$ is such that $\co(K)$ is the composition obtained by transposing the $k$th and $(k+1)$st blocks of $\co(J)$.  In particular, if we set $s' = r+t-s$ where 
$$r = j_{k-1},\ s = j_k,\ \myand\  t = j_{k+1},$$
then $K = J\setminus\{s\} \cup \{s'\}$.  Now define 
$$\cU = ([r], [s+1,t], [r+1,s], [t+1,n])$$
noting that 
$$\delta_\cU(u) = \begin{cases}
	u & \textrm{ if } u \in [r] \cup [t+1,n]\\
	u-(s-r) &\textrm{ if } u \in [s+1,t]\\
	u+(t-s) &\textrm{ if } u \in [t+1,s]
\end{cases}$$	

As $B$ is $D$-symmetric there exists a bijection $\Psi:B\to B$, corresponding to $\cU$, that satisfies (\ref{eq:Dsym cond}).  Letting $\pi\in B$ and $\tau = \Psi(\pi)$ it follows that 
	  $$\pi_1\ldots\pi_{r} \equiv \tau_1\ldots\tau_{r}\quad \myand \quad \pi_{t+1}\ldots\pi_{n} \equiv \tau_{t+1}\ldots\tau_{n}$$
that
	 $$\pi_{s+1}\ldots\pi_{t} \equiv \tau_{r+1}\ldots\tau_{s'}\quad\myand \quad \pi_{r+1}\ldots\pi_{s} \equiv \tau_{s'+1}\ldots\tau_{t}.$$
So Lemma~\ref{lem:shuf invariant} together with (\ref{eq:R_J as cshuf}) gives $\pi R_J^{-1} \equiv \Psi(\pi)R_K^{-1}$.  As $\Psi:B\to B$ is bijective the lemma now follows.  	
\end{proof}

\bigskip
\bigskip

To conclude this section we establish some needed properties of $D$-symmetric sets.  Our first lemma follows directly from the definition of $D$-symmetry.  In that lemma we make use of the following convention. For any finite set of positive integers $S$ set
$$\bx^S: =\prod_{i\in S}x_i.$$   
Our second lemma follows directly from the first lemma.  In both cases we omit formal proofs.

\begin{lemma}\label{lem:Dsym GFcond}
	Let $B\Subset \fS_n$. Then $B$ is $D$-symmetric if and only if for all $\cU\vdash [n]$ we have
\begin{equation}\label{eq:Dsym GFcond}
\sum_{\pi \in B} \bx^{\delta_\cU(\Des(\pi)\ \cap\ \cU^*)}  = \sum_{\pi \in B} \bx^{\Des(\pi)\ \cap\ \delta_\cU(\cU^*)}.
\end{equation}
\end{lemma}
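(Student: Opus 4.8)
The plan is to prove the equivalence in Lemma~\ref{lem:Dsym GFcond} by unwinding the definition of $D$-symmetry into the language of generating functions, treating each ordered set partition $\cU$ separately. Fix $\cU\vdash[n]$. The definition of $D$-symmetry asserts the existence of a bijection $\Psi_\cU:B\to B$ satisfying condition~(\ref{eq:Dsym cond}), namely that $u\in\Des(\pi)\iff \delta_\cU(u)\in\Des(\Psi_\cU(\pi))$ for all $u\in\cU^*$ and $\pi\in B$. The key observation is that this per-element biconditional is exactly an equality of the restricted descent sets under the map $\delta_\cU$: for each $\pi\in B$, condition~(\ref{eq:Dsym cond}) holds precisely when
$$
\delta_\cU\bigl(\Des(\pi)\cap\cU^*\bigr)=\Des(\Psi_\cU(\pi))\cap\delta_\cU(\cU^*),
$$
since $\delta_\cU$ is a bijection on $[n-1]$ restricted appropriately and $\cU^*$ maps bijectively onto $\delta_\cU(\cU^*)$.

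For the forward direction, I would assume $B$ is $D$-symmetric and use the bijection $\Psi_\cU$ to reindex the sum on the right-hand side of~(\ref{eq:Dsym GFcond}). Writing $\tau=\Psi_\cU(\pi)$, the displayed set equality above gives $\bx^{\delta_\cU(\Des(\pi)\cap\cU^*)}=\bx^{\Des(\tau)\cap\delta_\cU(\cU^*)}$ monomial-by-monomial. Summing over $\pi\in B$ and using that $\Psi_\cU$ is a bijection of $B$ to itself, the left sum in~(\ref{eq:Dsym GFcond}) transforms into the right sum, establishing the identity. For the converse, I would argue that equality of the two generating functions in~(\ref{eq:Dsym GFcond}) means the two multisets of monomials agree, hence there is a monomial-preserving bijection $\Psi_\cU:B\to B$ matching $\bx^{\delta_\cU(\Des(\pi)\cap\cU^*)}$ on the left with $\bx^{\Des(\tau)\cap\delta_\cU(\cU^*)}$ on the right; unwinding the monomial equality back through $\delta_\cU$ recovers condition~(\ref{eq:Dsym cond}) for that $\Psi_\cU$, so $B$ is $D$-symmetric.

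The main obstacle, and the point deserving the most care, is that the two sides of~(\ref{eq:Dsym cond}) only constrain behavior on $\cU^*$ and $\delta_\cU(\cU^*)$, so one must verify that the monomials appearing truly record only the relevant restricted descent information and that $\delta_\cU$ carries $\cU^*$ bijectively onto $\delta_\cU(\cU^*)$. Concretely, I would check that $\bx^{\delta_\cU(\Des(\pi)\cap\cU^*)}$ and $\bx^{\Des(\pi)\cap\delta_\cU(\cU^*)}$ are genuinely well-defined monomials in the variables indexed by $\delta_\cU(\cU^*)$, so that a monomial equality is equivalent to the corresponding set equality. Since the problem statement flags this as following directly from the definition with formal proofs omitted, I would keep the argument brief: the whole content is recognizing that a bijection satisfying a family of biconditionals indexed by $\cU^*$ is the same data as a monomial-matching bijection, which is in turn the same data as equality of the two generating functions.
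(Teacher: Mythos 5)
Your argument is correct and is exactly the direct unwinding of the definition that the paper has in mind (the paper explicitly omits a formal proof, stating the lemma ``follows directly from the definition of $D$-symmetry''). The key identification you make --- that the family of biconditionals in (\ref{eq:Dsym cond}) over $u\in\cU^*$ is the same as the single set equality $\delta_\cU(\Des(\pi)\cap\cU^*)=\Des(\Psi_\cU(\pi))\cap\delta_\cU(\cU^*)$, and hence that a qualifying bijection $\Psi_\cU$ is the same data as a matching of the two multisets of monomials --- is the whole content, and your check that $\delta_\cU$ carries $\cU^*$ bijectively into $[n-1]$ is the right detail to verify.
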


\begin{lemma}\label{lem:Dsym prop}
Set $A,B\Subset \fS_n$.  We then have
\begin{enumerate}
\item[a)] If $B$ is $D$-symmetric and $B\equiv A$ then $A$ is also $D$-symmetric.
\item[b)] If $\{B_i\}_{i\in I}$ is a collection of $D$-symmetric multisets  then $\bigsqcup_{i\in I} B_i$ is $D$-symmetric.  
\item[c)] If $A,B$ are $D$-symmetric with $A\subseteq B$ then so is $B\setminus A$.
\end{enumerate}
\end{lemma}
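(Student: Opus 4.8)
The plan is to deduce all three parts of Lemma~\ref{lem:Dsym prop} from the generating-function criterion in Lemma~\ref{lem:Dsym GFcond}, rather than wrestling directly with the bijections $\Psi_\cU$. The point of Equation~(\ref{eq:Dsym GFcond}) is that $D$-symmetry of $B$ is equivalent to a family of polynomial identities indexed by ordered set partitions $\cU\vdash[n]$, and each side of (\ref{eq:Dsym GFcond}) is \emph{additive} over the elements of $B$. That additivity is exactly what makes the three closure statements fall out formally.

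For part b), I would fix an arbitrary $\cU\vdash[n]$ and simply sum the identity (\ref{eq:Dsym GFcond}) over the index set $I$. Since each $B_i$ is $D$-symmetric, for each $i$ the left and right sums over $\pi\in B_i$ agree; because the total sum over the disjoint union $\bigsqcup_{i\in I}B_i$ is the sum over $i$ of the sums over $B_i$ (on each side separately), the combined identity holds for $\bigsqcup_{i\in I}B_i$. As $\cU$ was arbitrary, Lemma~\ref{lem:Dsym GFcond} gives $D$-symmetry of the disjoint union. For part a), the hypothesis $B\equiv A$ means there is a descent-set-preserving bijection between $A$ and $B$, so $\sum_{\pi\in A}\bx^{\Des(\pi)}=\sum_{\pi\in B}\bx^{\Des(\pi)}$; more to the point, both sides of (\ref{eq:Dsym GFcond}) depend on $\pi$ only through $\Des(\pi)$ (the exponent sets $\delta_\cU(\Des(\pi)\cap\cU^*)$ and $\Des(\pi)\cap\delta_\cU(\cU^*)$ are functions of $\Des(\pi)$ alone), so the descent-set-preserving bijection carries the identity for $B$ term-by-term to the identity for $A$. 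Hence $A$ satisfies (\ref{eq:Dsym GFcond}) for every $\cU$ and is $D$-symmetric.

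For part c), I would use subtraction in the generating-function identities: writing the identity (\ref{eq:Dsym GFcond}) for $B$ and for $A$ and subtracting, the additivity of each side over the elements gives the corresponding identity with the sum taken over $B\setminus A$, valid for every $\cU$. The only mild subtlety worth a sentence is the multiset bookkeeping: since $A\subseteq B$ as multisets (so every element of $A$ appears in $B$ with at least the same multiplicity), the multiset difference $B\setminus A$ is well defined and $\sum_{\pi\in B}=\sum_{\pi\in A}+\sum_{\pi\in B\setminus A}$ holds on each side. Subtracting then yields (\ref{eq:Dsym GFcond}) for $B\setminus A$, and Lemma~\ref{lem:Dsym GFcond} finishes it.

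I do not expect any genuine obstacle here — this is precisely why the authors reformulate $D$-symmetry as the linear identity (\ref{eq:Dsym GFcond}) and then remark that the proofs may be omitted. The one place to be careful, and the closest thing to a ``hard part,'' is making sure the two exponent sets appearing in (\ref{eq:Dsym GFcond}) really are determined by $\Des(\pi)$ and $\cU$ alone (needed for part a) and that all three manipulations are carried out for a \emph{fixed} $\cU$ before quantifying over all $\cU$ at the end; once that is set up, each part is a one-line additive/subtractive argument on the two sides of the identity.
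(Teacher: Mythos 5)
Your proof is correct and follows exactly the route the paper intends: the paper omits a formal proof but states that Lemma~\ref{lem:Dsym prop} ``follows directly'' from Lemma~\ref{lem:Dsym GFcond}, which is precisely your additive/subtractive argument on the two sides of (\ref{eq:Dsym GFcond}), together with the observation that each exponent set depends only on $\Des(\pi)$ and $\cU$ (which handles part a). Nothing is missing.
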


\begin{lemma}\label{lem:Dsym reduction}
	A multiset $B\Subset \fS_n$ is $D$-symmetric if and only if for each $\cU=(U,V)\vdash [n]$ there exists a bijection $\Psi_\cU:B\to B$ satisfying (\ref{eq:Dsym cond}).  
\end{lemma}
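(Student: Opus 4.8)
The forward implication is immediate, since $D$-symmetry requires a bijection satisfying (\ref{eq:Dsym cond}) for \emph{every} $\cU\vdash[n]$, in particular for the two-block ones. For the converse I would argue by induction on the number $s$ of blocks of $\cU$, assuming throughout that a bijection satisfying (\ref{eq:Dsym cond}) exists for every two-block ordered set partition. When $s=1$ we have $\delta_\cU=\mathrm{id}$ and $\cU^*=[n-1]$, so the identity map on $B$ works; when $s=2$ the required bijection is supplied by hypothesis. The plan for the inductive step is to write $\delta_\cU$ as a composite $\delta_{\cU_2}\circ\delta_{\cU_1}$, where $\cU_1$ has two blocks and $\cU_2$ has $s-1$ blocks, and then to compose the corresponding bijections.

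The engine of the argument is the following composition principle. Suppose $\cU_1,\cU_2\vdash[n]$ satisfy (i) $\cU^*\subseteq\cU_1^*$, (ii) $\delta_{\cU_1}(\cU^*)\subseteq\cU_2^*$, and (iii) $\delta_{\cU_2}\big(\delta_{\cU_1}(u)\big)=\delta_\cU(u)$ for every $u\in\cU^*$. If $\Psi_{\cU_1}$ and $\Psi_{\cU_2}$ are bijections of $B$ satisfying (\ref{eq:Dsym cond}) for $\cU_1$ and $\cU_2$, then $\Psi_\cU:=\Psi_{\cU_2}\circ\Psi_{\cU_1}$ satisfies (\ref{eq:Dsym cond}) for $\cU$. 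Indeed, fix $u\in\cU^*$ and $\pi\in B$; condition (i) lets us apply the $\cU_1$-relation to pass from $u\in\Des(\pi)$ to $\delta_{\cU_1}(u)\in\Des(\Psi_{\cU_1}(\pi))$, condition (ii) lets us apply the $\cU_2$-relation to $\Psi_{\cU_1}(\pi)$ at the index $\delta_{\cU_1}(u)$, and condition (iii) identifies the resulting index as $\delta_\cU(u)$, giving $u\in\Des(\pi)\iff\delta_\cU(u)\in\Des(\Psi_\cU(\pi))$. Since $\Psi_\cU$ is a composite of bijections of $B$ it is again such a bijection.

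It then remains to produce a factorization of this kind in which $\cU_2$ has one fewer block than $\cU$. I would take $\cU_1=(U_1\cup\cdots\cup U_{s-1},\,U_s)$, a two-block partition. Because $\delta_{\cU_1}$ is order-preserving on each of its two blocks, one checks that it already agrees with $\delta_\cU$ on $U_s^*$ (the block sizes force the same shift $j_{s-1}$), while on each $U_i^*$ with $i<s$ it embeds $U_i$ order-preservingly into the initial segment $[\,1,\,n-|U_s|\,]$, interleaving the images of the blocks. I would then let $\cU_2$ be the $(s-1)$-block partition whose blocks are $\delta_{\cU_1}(U_1),\ldots,\delta_{\cU_1}(U_{s-2})$ together with $\delta_{\cU_1}(U_{s-1})\cup\delta_{\cU_1}(U_s)$; applying $\delta_{\cU_2}$ re-sorts these interleaved images back onto the target intervals $[\,j_{i-1}+1,\,j_i\,]$, leaving the top part fixed. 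The main obstacle, and the only real content, is the bookkeeping needed to verify (i)--(iii): one must confirm that the two relabelings compose to exactly $\delta_\cU$ on $\cU^*$, and that each intermediate $\ast$-set contains the image of $\cU^*$. Both follow from the single structural fact that an order-preserving block map sends a pair of consecutive integers lying in a common block to a pair of consecutive integers, so that $\cU^*$-membership is transported correctly at each stage. With the factorization in hand, $\cU_1$ is covered by the two-block hypothesis and $\cU_2$ by the inductive hypothesis, and $\Psi_\cU=\Psi_{\cU_2}\circ\Psi_{\cU_1}$ completes the induction.
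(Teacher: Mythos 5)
Your argument is correct and is essentially the proof in the paper: both induct on the number of blocks and construct $\Psi_\cU$ by composing a bijection for a two-block partition with one for a partition having one fewer block, via a factorization $\delta_\cU=\delta_{\cU_2}\circ\delta_{\cU_1}$ whose validity rests on the same structural fact that $\delta$ sends consecutive integers in a common block to consecutive integers. The only difference is the order of the two factors (the paper first merges the last two blocks of $\cU$ to invoke induction and then splits off $\delta_\cV(U_{p+1})$ with a two-block partition, whereas you split off $U_s$ with the two-block partition first and then apply induction to the image partition), which is an immaterial variation.
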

\begin{proof}
The forward direction is trivial.  We concentrate on the reverse direction.  For the set partition $(\{1,2,\ldots, n\})$ consisting of 1 block, observe that the identity function on $B$ satisfies (\ref{eq:Dsym cond}).  Now assume, for an inductive proof, that for every set partition with $p$ blocks there exists a corresponding bijection that satisfies (\ref{eq:Dsym cond}).  Consider a set partition $\cU = (U_1,\ldots, U_p, U_{p+1})$ with $p+1$ blocks and define
$$\cV = (U_1,\ldots, U_{p-1}, U_p\cup U_{p+1}).$$
As $\cV$ has $p$ blocks we know by induction that there exists some bijection $\Psi_\cV:B\to B$ that satisfies (\ref{eq:Dsym cond}).  Define $\cW = ([n]\setminus W, W)\vdash [n]$ where $W = \delta_\cV(U_{p+1})$.  As this set partition has two blocks there exits a bijection $\Psi_\cW:B\to B$ satisfying (\ref{eq:Dsym cond}). It now suffices to prove that  $\Psi_\cW\circ \Psi_\cV$ is a bijection corresponding to $\cU$ that satisfies (\ref{eq:Dsym cond}).    

For $u\in \cU^*\subseteq \cV^*$ we know that $u,u+1$ are in the same block in $\cV$ and hence $\delta_\cV(u)+1 = \delta_\cV(u+1)$.  As the pair $u,u+1$ are also in the same block of $\cU$ then $u,u+1\in U_{p+1}$ or $u,u+1\in [n]\setminus U_{p+1}$.  So $\delta_\cV(u),\delta_\cV(u+1)$ are in the same block of $\cW$, i.e., $\delta_\cV(u)\in \cW^*$.  Combining these pieces it now follows that if $u\in \cU^*$ then 
\begin{align*}
u\in \Des(\pi) &\iff \delta_\cV(u)\in \Des(\Psi_\cV(\pi)) \\
&\iff \delta_\cW\circ \delta_\cV(u) \in \Des(\Psi_\cW\circ \Psi_\cV(\pi)) \\
&\iff \delta_\cU(u) \in \Des(\Psi_\cW\circ \Psi_\cV(\pi)),
\end{align*}
where the last equivalence follows by the easy-to-check fact that $\delta_\cU = \delta_\cW \circ \delta_\cV$.

%Let $S\subsetneq U_p$ and define 
%$$\cV = (U_1,\ldots, U_{p-1}, U_p\setminus S, S)\vdash [n].$$
%It now suffices to prove that there exists a bijection satisfying (\ref{eq:Dsym cond}) that corresponds to $\cV$.  To show this define $W = \delta_\cU(S)$.  Set $\cW = ([n]\setminus W, W)\vdash [n]$.  By our assumption there exists a bijection $\Psi_\cW:B\to B$ satisfying (\ref{eq:Dsym cond}). We show that $\Psi_\cW\circ \Psi_\cU$ is a bijection corresponding to $\cV$ that satisfies (\ref{eq:Dsym cond}).    An easy check shows that 
%$$\delta_\cV = \delta_\cW \circ \delta_\cU.$$
%Additionally, if $u\in \cV^*\subseteq \cU^*$ then we know that $u,u+1$ are in the same block in $\cU$ and hence $\delta_\cU(u)+1 = \delta_\cU(u+1)$.  As the pair $u,u+1$ are also in the same block of $\cV$ then $u,u+1\in S$ or $u,u+1\in [n]\setminus S$.  So $\delta_\cU(u),\delta_\cU(u+1)$ are in the same block of $\cW$, i.e., $\delta_\cU(u)\in \cW^*$.  Combining these pieces it now follows that if $u\in \cV^*$ then 
%\begin{align*}
%u\in \Des(\pi) &\iff \delta_\cU(u)\in \Des(\Psi_\cU(\pi)) \\
%&\iff \delta_\cW\circ \delta_\cU(u) \in \Des(\Psi_\cW\circ \Psi_\cU(\pi)) \\
%&\iff \delta_\cV(u) \in \Des(\Psi_\cW\circ \Psi_\cU(\pi)).
%\end{align*}

\end{proof}

\subsection{Sufficiency}\label{subsec:sufficienty}

The goal of this subsection is to prove that each of b), c), and d) individually implies e).   We begin with some discussion and definitions.  Recall that 
\begin{equation}\label{eq:Q(B)2}
Q(B) = \sum_{\pi\in B} F_{\Des(\pi),n}  = \sum_{\alpha\models n}b_\alpha M_\alpha \in \QSYM(n)	
\end{equation}
for some integers $b_\alpha$.  Next recall the correspondence $\co$ between subsets and compositions and the notation $\alpha\geq \beta$, for $\alpha,\beta\models n$,  meaning that $\beta$ is a refinement of $\alpha$.  Now for each $J\subseteq [n-1]$  the definition of the fundamental basis means that
$$b_{\co(J)} = |\makeset{\pi\in B}{\co(\Des(\pi)) \geq \co(J)}| = |\makeset{\pi\in B}{\Des(\pi) \subseteq J}|.$$
Defining $B_J:= \makeset{\pi\in B}{\Des(\pi)\subseteq  J}$ we have $b_{\co(J)} = |B_J|$. Recall that the monomial symmetric function $m_\lambda$  can be written as 
$$m_\lambda = \sum_{\alpha\sim \lambda} M_\alpha.$$ 
Therefore to show $Q(B) \in \Lambda(n)$ it suffices to prove that $|B_J| = |B_K|$ whenever $J\sim K$.

With this discussion in mind we turn to the proof that c) and d) each imply e).  

\begin{prop}\label{prop:leftright->sym}
If $BR_J^{-1} \equiv BR_K^{-1}$ whenever $J\sim K$ or $R_J^{-1}B \equiv R_K^{-1}B$ whenever $J\sim K$, then  $B$ is symmetric.  
\end{prop}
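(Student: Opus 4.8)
The plan is to reduce the claim to the counting criterion established just before the proposition: $B$ is symmetric if and only if $|B_J| = |B_K|$ whenever $J \sim K$, where $B_J = \{\pi \in B : \Des(\pi) \subseteq J\}$. So it suffices to extract the numbers $|B_J|$ from the multiset equivalences $BR_J^{-1} \equiv BR_K^{-1}$ (and similarly for the left version). The natural quantity to look at is the \emph{cardinality} of $BR_J^{-1}$, since descent-set-preserving bijections preserve cardinality; but cardinality alone collapses too much information, so instead I would track a finer invariant of the multiset $BR_J^{-1}$ that isolates $|B_J|$.

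The key observation is that $|BR_J^{-1}| = |B| \cdot |R_J^{-1}|$ always, so raw cardinality is useless. Instead I would count elements of $BR_J^{-1}$ with a prescribed descent set, or better, count a distinguished subclass. Concretely, by Lemma~\ref{lem:r as sigma} we have $BR_J^{-1} \equiv \bigsqcup_{\cU \in \Pi(n,J)} \sigma_\cU(B)$, and by Equation~(\ref{eq:des sigma}) each $\sigma_\cU(\pi)$ has $\Des(\delta_\cU) \subseteq \Des(\sigma_\cU(\pi))$. The cleanest handle is to count elements of $BR_J^{-1}$ whose descent set is \emph{exactly} some fixed set, or to count those with the minimal possible descent set. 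I would look at the number of $\tau \in BR_J^{-1}$ with $\Des(\tau) \subseteq K_0$ for a cleverly chosen $K_0$; the idea is that the composition data $\co(J)$ (up to the order of its parts) should be recoverable from the descent statistics of $BR_J^{-1}$, and the coefficient that records $|B_J|$ should survive. In fact, summing the descent generating function of $BR_J^{-1}$ over all $\cU$ and using (\ref{eq:des sigma}) expresses $\sum_{\tau \in BR_J^{-1}} \bx^{\Des(\tau)}$ in terms of the numbers $|B_L|$ for $L \subseteq J$, via the standardization/block structure; from the $J \sim K$ equivalence I then peel off the top coefficient to conclude $|B_J| = |B_K|$.

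The left-multiplication case should run in parallel using Lemma~\ref{lem:l as rho} and Equation~(\ref{eq:des rho}), where $\Des(\rho_\cU(\pi)) = \Des(\delta_\cU) \cup (\Des(\pi) \cap \cU^*)$; the structure is symmetric enough that the same extraction of $|B_J|$ should go through, so proving either hypothesis forces the counting condition and hence symmetry.

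I expect the main obstacle to be the bookkeeping that converts the multiset equivalence $BR_J^{-1} \equiv BR_K^{-1}$ into the single numerical equality $|B_J| = |B_K|$. The descent generating function of $BR_J^{-1}$ mixes contributions from all subsets $L \subseteq J$ through the $\bigsqcup_\cU$ decomposition, so I must identify exactly which monomial or which coefficient isolates $|B_J|$ cleanly and argue that the cross terms coming from proper subsets $L \subsetneq J$ either cancel or are already controlled by induction on $|J|$. An inductive argument on the size of $J$ (with the full descent generating function as the inductive carrier) is the most likely route to disentangle $|B_J|$ from the lower-order terms, and getting that induction set up correctly — choosing the right test quantity and verifying it is genuinely a $\equiv$-invariant — is where the real work lies.
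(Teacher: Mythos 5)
Your reduction to the counting criterion $|B_J| = |B_K|$ for $J\sim K$ is exactly right, as is the choice of tools (Lemma~\ref{lem:r as sigma} and Equation~(\ref{eq:des sigma})). But the proposal stops at the decisive step: you never identify \emph{which} coefficient of the descent generating function of $BR_J^{-1}$ equals $|B_J|$, and the route you sketch for finding it --- an induction on $|J|$ designed to cancel cross-terms coming from proper subsets $L\subsetneq J$ --- is solving a problem that does not arise. The answer is the coefficient of $\bx^{\emptyset}$, i.e.\ the multiplicity of descent-free permutations in $BR_J^{-1}$, and it isolates $|B_J|$ with no contamination at all. Indeed, by (\ref{eq:des sigma}) we have $\Des(\sigma_\cU(\pi)) \supseteq \Des(\delta_\cU)$, so $\Des(\sigma_\cU(\pi))=\emptyset$ forces $\Des(\delta_\cU)=\emptyset$, which happens for exactly one $\cU\in\Pi(n,J)$, namely the interval partition $\cI$ whose blocks are $[j_1], [j_1+1,j_2],\ldots$; and for that partition the remaining condition $\{u\in\cI^* : \delta_\cI(u)\in\Des(\pi)\}=\emptyset$ reads $\Des(\pi)\cap\cI^*=\emptyset$, i.e.\ $\Des(\pi)\subseteq[n-1]\setminus\cI^* = J$. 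So the coefficient of $\bx^\emptyset$ in $\sum_{\tau\in BR_J^{-1}}\bx^{\Des(\tau)}$ is exactly $|B_J|$, and the hypothesis $BR_J^{-1}\equiv BR_K^{-1}$ gives $|B_J|=|B_K|$ immediately. No induction and no cancellation are needed.

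The left-multiplication case is genuinely parallel, as you predict: by (\ref{eq:des rho}), $\Des(\rho_\cU(\pi))=\emptyset$ again forces $\cU=\cI$ and then $\Des(\pi)\cap\cI^*=\emptyset$, so the same coefficient again equals $|B_J|$. With the single observation above supplied, your outline becomes the paper's proof; without it, the argument remains a plan rather than a proof, and the inductive machinery you anticipate would only obscure a one-line extraction.
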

\begin{proof}
We start with the first claim. By Lemma~\ref{lem:r as sigma} we have
	$$\sum_{\pi \in BR_J^{-1}} \bx^{\Des(\pi)} = \sum_{\cU\in \Pi(n,J)\atop \pi\in B}  \bx^{\Des(\sigma_\cU(\pi))}.$$
Now observe that the coefficient on $\bx^\emptyset$ in this expression is
	$$|\makeset{\pi\in B}{\Des(\pi)\subseteq J}| = |B_J|.$$
This can be seen by considering (\ref{eq:des sigma}) of Lemma~\ref{lem:des char} and noting that in order for $\Des(\sigma_\cU(\pi)) = \emptyset$ we must have $\delta_\cU = 1$. By our assumption we know that if $J\sim K$ then $BR_J^{-1} \equiv BR_K^{-1}$.  So $|B_J| = |B_K|$ which, in light of discussion above, proves our first claim.  

To prove the second claim, we know by Lemma~\ref{lem:l as rho} that 
	$$\sum_{\pi \in R_J^{-1}B} \bx^{\Des(\pi)} = \sum_{\cU\in \Pi(n,J)\atop \pi\in B}  \bx^{\Des(\rho_\cU(\pi))}.$$
By appealing to (\ref{eq:des rho}) in Lemma~\ref{lem:des char}, a similar proof to that in the first case establishes our second claim.
\end{proof}

We now turn our attention to proving that if $B$ is $D$-commutative with all inverse $J$-classes then it is symmetric, i.e., that b) implies e) in our main theorem.  For reference and to set the stage note that Lemmas~\ref{lem:r as sigma} and \ref{lem:l as rho} imply that if $BR_J^{-1} \equiv R_J^{-1}B$ then 
\begin{equation}\label{eq:B commutes}
\sum_{\cU\in\Pi(n,J)\atop \pi\in B} \textbf{x}^{\Des(\sigma_\cU(\pi))}= \sum_{\cU\in\Pi(n,J)\atop \pi\in B} \textbf{x}^{\Des(\rho_\cU(\pi))}.
\end{equation}
They key idea in the coming proofs is to consider the coefficient $c_i$ on $\bx^{\{i\}}$ in this generating function.  To describe this coefficient we make the following definitions.

\begin{definitions}
	For any $\cU\in \Pi(n)$ define 
$$r(\cU):= [n-1]\setminus \cU^*\quad \myand\quad s(\cU):=[n-1]\setminus \delta_\cU(\cU^*).$$
Additionally, for any $\alpha\models n$ and $i\in [n-1]$ set
	$$\Pi_i(\alpha) = \makeset{\cU\in \Pi(\alpha)}{\Des(\delta_\cU) = \{i\}}$$
and let $\Gamma_i(\alpha)$ consist of all $\cU\in \Pi_i(\alpha)$ such that all blocks in $\cU$ are intervals. 
\end{definitions}

We now have the following lemma.

\begin{lemma}\label{lem:coef on i}
Fix $\alpha\models n$.  Take $f_\cU= \rho_\cU$ or $\sigma_\cU$ and define $c_i$ to be the coefficient on  $\bx^{\{i\}}$ in 
$$\sum_{\cU\in \Pi(\alpha)\atop \pi\in B} \bx^{\Des(f_\cU(\pi))}.$$
Then there is some $a\geq 0$ so that
$$c_i = \begin{cases}
	a+ \sum_{\cU\in \Pi_i(\alpha)}|B_{r(\cU)}|& \textrm{ if } f_\cU = \sigma_\cU\\
	a+ \sum_{\cU\in \Pi_i(\alpha)}|B_{s(\cU)}|& \textrm{ if } f_\cU = \rho_\cU.
\end{cases}
$$
\end{lemma}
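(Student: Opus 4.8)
The plan is to read $c_i$ directly off the two descent formulas of Lemma~\ref{lem:des char} by classifying the pairs $(\cU,\pi)\in\Pi(\alpha)\times B$ for which $\Des(f_\cU(\pi))=\{i\}$, since these are exactly the pairs that $c_i$ counts. Both (\ref{eq:des rho}) and (\ref{eq:des sigma}) express $\Des(f_\cU(\pi))=\Des(\delta_\cU)\sqcup E_\cU(\pi)$ as a disjoint union, where the within-block contribution is $E_\cU(\pi)=\Des(\pi)\cap\cU^*$ when $f_\cU=\rho_\cU$ and $E_\cU(\pi)=\{u\in\cU^*:\delta_\cU(u)\in\Des(\pi)\}$ when $f_\cU=\sigma_\cU$; that this union is disjoint is precisely what is shown in the proof of Lemma~\ref{lem:des char}. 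Because $\{i\}$ splits into two disjoint subsets in only two ways, a pair contributes precisely when either (A) $\Des(\delta_\cU)=\{i\}$ and $E_\cU(\pi)=\emptyset$, or (B) $\Des(\delta_\cU)=\emptyset$ and $E_\cU(\pi)=\{i\}$.

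I would first dispose of case (B), which produces the constant $a$. Here $\Des(\delta_\cU)=\emptyset$ forces $\delta_\cU$ to be the identity, hence $\cU$ to be the unique partition $\cU_0\in\Pi(\alpha)$ whose blocks are the consecutive increasing intervals. The key point is that for this single $\cU_0$ we have $\delta_{\cU_0}=\mathrm{id}$, so the two descriptions of $E_{\cU_0}(\pi)$ coincide and both reduce to $\Des(\pi)\cap\cU_0^*$. Therefore the case-(B) count, namely $a:=|\{\pi\in B:\Des(\pi)\cap\cU_0^*=\{i\}\}|$, is one and the same nonnegative integer whether $f_\cU=\sigma_\cU$ or $f_\cU=\rho_\cU$; this is what lets a single $a$ serve both branches.

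Next I would treat case (A), in which $\cU$ ranges over $\Pi_i(\alpha)$ and one counts the $\pi\in B$ with $E_\cU(\pi)=\emptyset$. This is exactly where $r(\cU)=[n-1]\setminus\cU^*$ and $s(\cU)=[n-1]\setminus\delta_\cU(\cU^*)$ enter. For $f_\cU=\rho_\cU$ the condition $\Des(\pi)\cap\cU^*=\emptyset$ is the same as $\Des(\pi)\subseteq r(\cU)$, so the admissible $\pi$ number $|B_{r(\cU)}|$; for $f_\cU=\sigma_\cU$ the condition $\Des(\pi)\cap\delta_\cU(\cU^*)=\emptyset$ is the same as $\Des(\pi)\subseteq s(\cU)$, so they number $|B_{s(\cU)}|$. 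Summing over $\cU\in\Pi_i(\alpha)$ and adding the case-(B) total $a$ then gives $c_i=a+\sum_{\cU\in\Pi_i(\alpha)}|B_{r(\cU)}|$ when $f_\cU=\rho_\cU$ and $c_i=a+\sum_{\cU\in\Pi_i(\alpha)}|B_{s(\cU)}|$ when $f_\cU=\sigma_\cU$.

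The only genuine difficulty is bookkeeping rather than any new idea. I must confirm that (A) and (B) are disjoint and exhaustive — immediate from the disjoint-union structure together with $\Des(\delta_{\cU_0})=\emptyset\neq\{i\}$, so that $\cU_0\notin\Pi_i(\alpha)$ — and I must be careful to apply the equivalence ``$E_\cU(\pi)=\emptyset$'' $\iff$ ``$\Des(\pi)$ avoids the controlling set'' to the correct set in each branch: the set $\cU^*$ itself for $\rho_\cU$, and its $\delta_\cU$-image $\delta_\cU(\cU^*)$ for $\sigma_\cU$. This is exactly the distinction built into the definitions of $r(\cU)$ and $s(\cU)$. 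Beyond Lemma~\ref{lem:des char} and the definitions of $r$, $s$, and $\Pi_i(\alpha)$, nothing further is needed.
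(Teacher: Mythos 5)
Your proof is correct and follows essentially the same route as the paper's: split $\{i\}=\Des(\delta_\cU)\sqcup E_\cU(\pi)$ using the disjointness established in Lemma~\ref{lem:des char}, identify the unique interval partition $\cI$ as the sole contributor when $\Des(\delta_\cU)=\emptyset$ (giving a single value $a$ valid for both branches, since $\delta_\cI$ is the identity and both descriptions of $E_{\cI}(\pi)$ collapse to $\Des(\pi)\cap\cI^*$), and for $\cU\in\Pi_i(\alpha)$ convert $E_\cU(\pi)=\emptyset$ into $\Des(\pi)\subseteq r(\cU)$ or $\Des(\pi)\subseteq s(\cU)$ as appropriate. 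One point worth flagging: the branch assignment you derive ($r(\cU)$ for $\rho_\cU$ and $s(\cU)$ for $\sigma_\cU$) is the transpose of the lemma as printed, which attaches $r(\cU)$ to $\sigma_\cU$ and $s(\cU)$ to $\rho_\cU$; your assignment is the one forced by (\ref{eq:des rho}) and (\ref{eq:des sigma}) and is exactly what the paper's own proof derives, so the printed statement evidently has its two cases swapped. The discrepancy is harmless downstream, since Proposition~\ref{prop:com->sym} uses only the equality $\sum_{\cU\in\Pi_k(\alpha)}|B_{r(\cU)}|=\sum_{\cU\in\Pi_k(\alpha)}|B_{s(\cU)}|$, which is indifferent to which sum comes from which side.
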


\begin{proof}
First consider the case when $f_\cU=\rho_\cU$.  	Appealing to (\ref{eq:des rho}) in Lemma~\ref{lem:des char} we have $\Des(\rho_\cU(\pi))=\{i\}$ if and only if 
$$\Des(\delta_\cU)=\emptyset \quad\myand\quad  \Des(\pi)\cap \cU^* = \{i\}$$
or vice versa.  In the displayed case, note that $\Des(\delta_\cU)=\emptyset$ if and only if $\cU$ is the unique partition $\cI\in \Pi(\alpha)$ whose first block is the interval $[\alpha_1]$ and whose second block is the following $\alpha_2$ integers, etc.  Set $a = |\makeset{\pi\in B}{\Des(\pi)\cap \cI^*}|$.
Now consider the other case when
$$\Des(\delta_\cU)=\{i\} \quad\myand \quad  \Des(\pi)\cap \cU^* = \emptyset.$$
This occurs if and only if $\cU\in \Pi_i(\alpha)$ and $\Des(\pi)\subseteq [n-1]\setminus \cU^* = r(\cU)$.  This establishes the case when $f_\cU = \rho_\cU$.  

Now consider the case when  $f=\sigma_\cU$.  Appealing to (\ref{eq:des sigma}) in Lemma~\ref{lem:des char} we have $\Des(\sigma_\cU(\pi)) = \{i\}$ if and only if 
$$\Des(\delta_\cU) = \emptyset \quad \myand\quad  \makeset{u\in \cU^*}{\delta_\cU(u)\in\Des(\pi)} = \{i\}$$
or vice versa.  As before, the displayed case can only occur when $\cU = \cI$.  As $\delta_\cI = 1$ we further have 
$$\{i\} = \makeset{u\in \cI^*}{\delta_\cI(u)\in\Des(\pi)} = \Des(\pi)\cap \cI^*.$$
so that we can take $a$ as above in this case as well.  Now consider the case when 
$$\Des(\delta_\cU) = \{i\}  \quad \myand\quad  \makeset{u\in \cU^*}{\delta_\cU(u)\in\Des(\pi)} = \emptyset.$$
This occurs when $\cU\in \Pi_i(\alpha)$ and $\Des(\pi) \cap \delta_\cU(\cU^*) = \emptyset$. As the latter is equivalent to $\Des(\pi) \subseteq [n-1] \setminus \delta_\cU(\cU^*)\subseteq s(\cU)$,   this explains our second term.  

\end{proof}

\begin{lemma}\label{lem:r and s perms}
	Fix $\alpha\models n$ For any $\cU\vdash \Pi(\alpha)$ we have $r(\cU) \sim s(\cU)$.  
\end{lemma}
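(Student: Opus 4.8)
The plan is to show that both $\co(r(\cU))$ and $\co(s(\cU))$ record the same multiset of integers---namely the lengths of the maximal runs of consecutive integers inside the blocks of $\cU$---merely listed in two different orders. Throughout set $j_i = \alpha_1 + \cdots + \alpha_i$ (with $j_0 = 0$), so that $\delta_\cU$ restricts to the increasing bijection from $U_i$ onto the interval $[j_{i-1}+1, j_i]$. Decompose each block $U_i$ uniquely as a disjoint union of maximal intervals of consecutive integers, which I call the \emph{runs} $R_{i,1} < R_{i,2} < \cdots$ of $U_i$, and write $\ell_{i,m} = |R_{i,m}|$ for their sizes.

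First I would handle $r(\cU)$. The key observation is that $i \in \cU^*$ exactly when $i$ and $i+1$ lie in a common block, equivalently in a common run of a common block. Consequently $r(\cU) = [n-1] \setminus \cU^*$ is precisely the set of positions at which two consecutive integers of $[n]$ are separated, so cutting $[n]$ at the positions of $r(\cU)$ breaks it into exactly the runs $R_{i,m}$. By the definition of $\co$, this means $\co(r(\cU))$ is the list of all run sizes $\ell_{i,m}$ read in the order of their least elements in $[n]$.

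Next I would analyze $s(\cU) = [n-1] \setminus \delta_\cU(\cU^*)$. Since $\delta_\cU$ is order preserving on $U_i$, it carries each run $R_{i,m}$ onto a subinterval of $[j_{i-1}+1, j_i]$ of the same size $\ell_{i,m}$ and sends consecutive elements of a run to consecutive integers. Hence for $u \in \cU^*$ with $u, u+1 \in R_{i,m}$ we have $\delta_\cU(u)+1 = \delta_\cU(u+1)$, so $\delta_\cU(u)$ is an interior point of the image of $R_{i,m}$; conversely the terminal point of each run image, together with each internal block boundary $j_i$ (with $1 \leq i \leq s-1$), fails to lie in $\delta_\cU(\cU^*)$. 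Therefore $s(\cU)$ cuts $[n]$ exactly at the run-image boundaries and the block boundaries, so $\co(s(\cU))$ is the list of the same run sizes $\ell_{i,m}$, now ordered first by block index $i$ and then by run order $m$.

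Both compositions are thus rearrangements of the multiset $\{\ell_{i,m}\}$, which gives $\co(r(\cU)) \sim \co(s(\cU))$, i.e.\ $r(\cU) \sim s(\cU)$. The only point requiring genuine care is the bookkeeping at boundaries: confirming that each internal block boundary $j_i$ really is a cut for $s(\cU)$ (which holds because $u = \max U_i$ can never lie in $\cU^*$, as $u+1 \notin U_i$), and verifying that the image of a run under $\delta_\cU$ is a full subinterval of the expected length, so that the piece sizes on the two sides agree as multisets and not merely in number. I expect this interval/boundary bookkeeping to be the main, if modest, obstacle.
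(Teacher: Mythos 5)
Your proof is correct and follows essentially the same route as the paper's: both arguments rest on decomposing each block into its maximal runs of consecutive integers and observing that $\co(r(\cU))$ and $\co(s(\cU))$ each list the multiset of run lengths, just in different orders. The paper packages this as a reduction to the interval-block case via the run refinement (using that the refinement has the same $\cU^*$ and the same $\delta_\cU$), whereas you carry out the same computation directly, but the underlying idea is identical.
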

\begin{proof}
First consider the case when all the blocks of $\cU$ are intervals.  Let $J\subseteq[n-1]$ be such that $\co(J) = \alpha$. As the blocks in $\cU$ are intervals we see that
$$\delta_\cU(\cU^*) = [n-1]\setminus J.$$ 
This means that $s(\cU)=J$.    We must now show that $\co(r(\cU)) \sim \alpha = \co(J)$.   Again using the fact that the blocks of $\cU$ are intervals we may define $\cW=(W_1, W_2,\ldots)$ to be the ordered set partition obtained by permuting the blocks of $\cU$ so that $\max(W_i) < \min(W_{i+1})$.    As $\cW^* = \cU^*$ it follows that
$$r(\cU) = r(\cW)=\{\max(W_1)<\max(W_2)<\cdots\}$$  
which, in turn, implies that $\co(r(\cU)) = (|W_1|, |W_2|, \ldots)$. By our choice of $\cW$ our claim now follows.

Now consider an arbitrary $\cU\in \Pi(\alpha)$ and let $\cV$ be the refinement given by replacing each block $U_i$ of $\cU$ with the sequence $(I_1,I_2,\ldots)$ of maximal nonempty intervals in $U_i$ ordered so that $\max(I_i)< \min(I_{i+1})$.    Observe that $\cV^* = \cU^*$ and $\delta_\cU = \delta_\cV$. Consequently, $r(\cU) = r(\cV)$ and $s(\cU) = s(\cV)$.  The general claim now follows from our first paragraph.    
\end{proof}

For the next few proofs, some additional terminology relating to permutations of compositions is required. 
For any composition $\alpha\models n$ with $p$ parts and $I=\{i_1<i_2<\cdots <i_s\}\subseteq [p]$ we write
$$\alpha(I):= (\alpha_{i_1},\alpha_{i_2},\ldots, \alpha_{i_s},\alpha_{j_1},\alpha_{j_2},\ldots, \alpha_{j_t})$$
where $j_1<j_2<\ldots <j_t$ are the elements in $[p]\setminus I$. In particular for any $m\leq p$ we have $\alpha([m]) = \alpha$ and, in general, $\alpha\sim \alpha(I)$.  We also define $S_k(\alpha)$ to be the set of all $I\subseteq[p]$ such that $\sum_{i\in I}\alpha_i = k$.  As all the parts of $\alpha$ are positive integers there is at most one $I=[m]\subseteq [p]$ with $I\in S_k(\alpha)$.  In this case set  $S_k'(\alpha) = S_k(\alpha)\setminus\{I\}$ otherwise  set $S_k'(\alpha) = S_k(\alpha)$.

\begin{lemma}\label{lem:coef equal}
	Fix $\lambda\vdash n$ and assume for each composition $\alpha\sim \lambda$ there exists some $C_\alpha\geq 0$. If for each such $\alpha$ and $k\leq n$ we have
\begin{equation}\label{eq:constance}
	|S_k(\alpha)|\cdot C_\alpha = \sum_{I\in S_k(\alpha)} C_{\alpha(I)},
\end{equation}
then the $C_\alpha$'s are equal.
\end{lemma}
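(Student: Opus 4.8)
The plan is to reinterpret the hypothesis \eqref{eq:constance} as an averaging relation and then run an extremal argument. Whenever $S_k(\alpha)\neq\emptyset$, dividing \eqref{eq:constance} by $|S_k(\alpha)|$ shows that $C_\alpha$ is the \emph{average} of the numbers $C_{\alpha(I)}$ as $I$ ranges over $S_k(\alpha)$. The crucial observation is that $k$ is a free parameter: for any single subset $I\subseteq[p]$ we may take $k=\sum_{i\in I}\alpha_i$, so that $I\in S_k(\alpha)$ and $C_{\alpha(I)}$ appears as one of the terms being averaged. Note also that every $\alpha(I)$ is a rearrangement of $\alpha$, hence $\alpha(I)\sim\lambda$, so each $C_{\alpha(I)}$ is among the quantities under consideration.

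First I would set $M:=\max_{\alpha\sim\lambda} C_\alpha$, attained at some composition $\alpha^\ast$, and record the consequence of the averaging relation: if $C_\alpha=M$, then since $M$ is the average of the quantities $C_{\alpha(I)}\le M$ over $I\in S_k(\alpha)$, every one of these terms must itself equal $M$. Applying this with $k=\sum_{i\in I}\alpha_i$ for an arbitrary $I\subseteq[p]$ shows that the set $\mathcal{M}:=\makeset{\alpha\sim\lambda}{C_\alpha=M}$ is closed under the operation $\alpha\mapsto\alpha(I)$ for every $I$. In particular $\mathcal{M}$ is closed under the single-element moves $\alpha\mapsto\alpha(\{i\})=(\alpha_i,\alpha_1,\dots,\alpha_{i-1},\alpha_{i+1},\dots,\alpha_p)$, which move the $i$th part to the front.

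It then remains to show that, starting from $\alpha^\ast$, these move-to-front operations produce every composition $\sim\lambda$; combined with the closure of $\mathcal{M}$ this forces $\mathcal{M}$ to contain all of them, whence $C_\alpha=M$ for every $\alpha\sim\lambda$ and the $C_\alpha$ are equal. Viewing $\alpha\mapsto\alpha(\{i\})$ as a permutation of the $p$ positions, it is exactly the cyclic permutation $(1\,2\,\cdots\,i)$; ranging over $i=2,\dots,p$ these include the adjacent transposition $(1\,2)$ and the full cycle $(1\,2\,\cdots\,p)$, which together generate the whole symmetric group $\fS_p$ (conjugating $(1\,2)$ by powers of the full cycle yields every adjacent transposition). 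Hence every rearrangement of the positions of $\alpha^\ast$ is reachable by a sequence of admissible moves, so in particular every composition $\sim\lambda$ is reached.

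The engine of the argument, the averaging reformulation together with the maximum principle, is short and clean; I do not expect to need the hypothesis $C_\alpha\ge0$ beyond the finiteness that makes the maximum attained. The one place demanding care is the reachability claim: one must confirm that the admissible moves act transitively on the rearrangements of $\lambda$. I expect this to be the main (though entirely elementary) obstacle, and I would dispatch it exactly as above, by identifying the single-element moves with the cycles $(1\,2\,\cdots\,i)$ and invoking the standard fact that a transposition of two adjacent points together with a full cycle generate $\fS_p$.
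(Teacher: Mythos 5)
Your proof is correct and follows essentially the same route as the paper's: interpret \eqref{eq:constance} as saying $C_\alpha$ is an average, take a composition maximizing $C_\alpha$, and propagate the maximum along move-to-front operations $\alpha\mapsto\alpha(\{j\})$ using $k=\alpha_j$. The only difference is that you explicitly justify the connectivity claim (that these moves reach every rearrangement, via the cycles $(1\,2\,\cdots\,i)$ generating $\fS_p$), which the paper simply asserts.
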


\begin{proof}
For $\alpha,\beta\sim \lambda$ there exists a sequence of compositions 
$$\alpha=\alpha^{(1)}, \alpha^{(1)}, \ldots, \alpha^{(t)}=\beta$$
so that $\alpha^{(i+1)} = \alpha^{(i)}(\{j\})$ for some $j$.  Now choose $\alpha$ so that $C_\alpha$ is maximized.  By this choice of $\alpha$ together with (\ref{eq:constance}) and $k = \alpha_j$ it follows that $C_\alpha = C_{\alpha(\{j\})}$.  This with our first observation implies our lemma.   
\end{proof}

\begin{lemma}\label{lem:Gamma bij}
	Set $k< n$ and  $\alpha\models n$.   There exists a bijection 
	$$f:S_k'(\alpha)\to \Gamma_k(\alpha)$$
	so that for each $I\in S_k'(\alpha)$ we have $\co(r(f(I))) = \alpha(I)$.   
\end{lemma}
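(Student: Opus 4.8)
The plan is to build $f$ by hand and to exhibit its inverse explicitly; everything reduces to two facts about an ordered set partition $\cU\in\Pi(\alpha)$ all of whose blocks are intervals. Such a $\cU$ tiles $[n]$ by consecutive intervals, and I write $c=(c_1,\ldots,c_p)$ for the sequence of block indices read from left to right, so that the leftmost interval is the block $U_{c_1}$, and so on. The first fact, taken from the computation in the proof of Lemma~\ref{lem:r and s perms}, is that $\co(r(\cU))=(\alpha_{c_1},\ldots,\alpha_{c_p})$, the sequence of block sizes in position order. The second is that, since $\delta_\cU$ is increasing on each interval block, every descent of $\delta_\cU$ occurs at a boundary between two consecutive intervals, and the boundary between the $l$th and $(l+1)$st intervals is a descent exactly when $c_l>c_{l+1}$; moreover that boundary sits at position $\alpha_{c_1}+\cdots+\alpha_{c_l}$. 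In particular $\Des(\delta_\cU)$ is carried bijectively onto the descent set of the sequence $c$.

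Throughout I use the elementary observation that for a subset $A\subseteq[p]$ of size $a$ with complement $B$, one has $\max A<\min B$ iff $A=[a]$, equivalently $\max A>\min B$ iff $A\neq[a]$. To define $f$, take $I=\{i_1<\cdots<i_s\}\in S_k'(\alpha)$ with complement $\{j_1<\cdots<j_t\}$ and let $f(I)$ be the interval ordered set partition whose block-index sequence is $c=(i_1,\ldots,i_s,j_1,\ldots,j_t)$, the block indexed $i$ having size $\alpha_i$. By the first fact, $\co(r(f(I)))=(\alpha_{i_1},\ldots,\alpha_{i_s},\alpha_{j_1},\ldots,\alpha_{j_t})=\alpha(I)$. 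Since $c$ is increasing on its first $s$ and last $t$ entries, its only candidate descent is the junction $i_s,j_1$, which is a descent iff $i_s>j_1$, i.e.\ iff $I\neq[s]$ by the observation. As the parts of $\alpha$ are positive, there is at most one prefix $[m]$ with $\sum_{i\in[m]}\alpha_i=k$, and this is precisely the element removed in passing from $S_k(\alpha)$ to $S_k'(\alpha)$; hence $I\neq[s]$, so $c$ has exactly one descent, and by the second fact it sits at position $\alpha_{i_1}+\cdots+\alpha_{i_s}=k$. Thus $\Des(\delta_{f(I)})=\{k\}$ and $f(I)\in\Gamma_k(\alpha)$.

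For injectivity and surjectivity I would exhibit the inverse. Given $\cU\in\Gamma_k(\alpha)$ with block-index sequence $c$, the second fact shows $c$ has a single descent, between $c_l$ and $c_{l+1}$, located at position $\alpha_{c_1}+\cdots+\alpha_{c_l}=k$; the single-descent condition forces $c_1<\cdots<c_l$ and $c_{l+1}<\cdots<c_p$, so $c$ is recovered from the set $\{c_1,\ldots,c_l\}$ alone. I send $\cU$ to this set, which lies in $S_k(\alpha)$ because its parts sum to $k$, and which is not the prefix $[l]$ because $c_l>c_{l+1}$ gives $\max>\min(\text{complement})$ via the observation; hence it lies in $S_k'(\alpha)$. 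These two maps are visibly mutual inverses, so $f$ is the desired bijection.

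The step I expect to be the main obstacle is the bookkeeping that pins the unique descent to position exactly $k$ and, hand in hand with it, the recognition that deleting the single prefix to form $S_k'(\alpha)$ is exactly what discards the unique block ordering having no descent at all. Once the two facts about interval partitions are isolated, the construction and its inverse are routine.
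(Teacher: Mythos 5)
Your construction coincides with the paper's: $f(I)$ is the ordered set partition tiling $[n]$ by consecutive intervals whose blocks, read left to right, are indexed by $I$ in increasing order followed by its complement in increasing order, so the argument is correct and takes essentially the same route. If anything, you supply more detail than the paper, which merely asserts injectivity and surjectivity where you exhibit the explicit inverse and verify that excluding the prefix from $S_k(\alpha)$ is exactly what forces the unique descent of $\delta_{f(I)}$ to occur, at position $k$.
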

\begin{proof}
Fix $I \in S_k'(\alpha)$ and let $J\subseteq[n-1]$ be such that $\co(J) = \alpha(I)$. Define $f(I)=(U_1,\ldots, U_p)\vdash [n]$ by
$$(U_i)_{i\in I} = ([j_1], [j_1+1,j_2],\ldots, [j_{t-1},j_{t}]) \vdash [k]$$
and 
$$(U_i)_{i\in [p]\setminus I}= [j_{t+1},j_{t+2}],\ldots, [j_{p-1},n]) \vdash [n]\setminus [k].$$
As $I \neq [m]\subseteq [p]$ for some form $m\leq p$ it follows that $f(I)\in \Gamma_k(\alpha)$.  Our map $f$ is certainly injective.  It also follows that every partition in $\Gamma_k(\alpha)$ can be constructed in this manner.  Hence $f$ is also surjective.  

Continuing with the notation above we see that $r(f(I)) = [n-1]\setminus f(I)^* = J$.  As $\co(J) = \alpha(I)$ this justifies our last claim.

%Let $p$ be the number of parts in $\alpha$.  For any $\cU=(U_1,U_2,\ldots,U_p)\in \Gamma_{k}(\alpha)$ define
%$$f(\cU) := \makeset{i\in[p] }{U_i\subseteq[k]}.$$  
%To see that $f(\cU)\in S_k'(\alpha)$ recall that by definition of $\Gamma_k(\alpha)$ the $\cU\vdash [n]$ where each $U_i$ is an interval.  As $\Des(\delta_\cU) = \{k\}$ and $\delta_\cU$ is increasing on each $U_i$ it follows that either $U_i\subseteq [k]$ or $U_i\subseteq [n]\setminus[k]$.  From this we see that $f(\cU) \in S_k'(\alpha)$.  
%
%To see that $f$ is bijective we construct its inverse.  Let $I\in S_k'(\alpha)$ and define $J=\{j_1<j_2<\cdots < j_{p-1}\}\subseteq[n-1]$ so that $\co(J) = \alpha(I)$.  Setting $|I| = t$ we see that $j_t = k$.  Now define 
%$$g(I) := (V_1,\ldots, V_p)$$
%where the $V_i$ are defined so that we have the following equalities of  subsequences
%$$(V_i)_{i\in I} = ([j_1], [j_1+1,j_2],\ldots, [j_{t-1},j_{t}]) \vdash [k]$$
%and 
%$$(V_i)_{i\in [p]\setminus I}= [j_{t+1},j_{t+2}],\ldots, [j_{p-1},n] \vdash [n]\setminus [k].$$
%It follows that $\delta_{g(I)}$ is increasing on $[k]$ and $[n]\setminus [k]$.  
%By our definition of $S_k'(\alpha)$ it also follows that $\max(I) >\min([p]\setminus I)$ and hence $\delta_{g(I)}(k) >\delta_{g(I)}(k+1)$ guaranteeing that $\Des(\delta_{g(I)}) = \{k\}$.  So $g(I)\in \Gamma_{k}(\alpha)$.  It is now clear that $f^{-1} = g$.  
%
%To see our last claim we see that $r(g(I))= [n-1]\setminus g(I)^* = J$ where we continue with the notation from above. As $\co(J) = \alpha(I)$ and $g$ and $f$ are inverses the last claim follows.  
\end{proof}

\begin{prop}\label{prop:com->sym}
	If $B\Subset \fS_n$ is $D$-commutative with all inverse $J$-classes then $B$ is symmetric.  
\end{prop}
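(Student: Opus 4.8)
The plan is to show that $|B_J| = |B_K|$ whenever $J \sim K$; as observed at the start of this subsection, this is exactly what is needed to conclude $Q(B)\in\Lambda(n)$. Writing $C_\alpha := |B_J|$ for $\co(J)=\alpha$, the goal becomes $C_\alpha = C_\beta$ whenever $\alpha\sim\beta$. I would prove this by induction on partitions of $n$ ordered by number of parts, treating partitions with \emph{more} parts first (the base case being the single composition of $(1^n)$, where there is nothing to show). So fix $\lambda\vdash n$ with $p$ parts and assume $C$ is constant on the $\sim$-class of every partition with more than $p$ parts.

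The engine is Equation~(\ref{eq:B commutes}), which holds for every $\alpha\models n$ by $D$-commutativity together with Lemmas~\ref{lem:r as sigma} and~\ref{lem:l as rho}. Fixing $\alpha\sim\lambda$ and $k\in[n-1]$, I extract the coefficient of $\bx^{\{k\}}$ from both sides of~(\ref{eq:B commutes}) using Lemma~\ref{lem:coef on i}. The additive constant $a$ is common to the two sides and cancels, leaving
$$\sum_{\cU\in\Pi_k(\alpha)} C_{\co(r(\cU))} \;=\; \sum_{\cU\in\Pi_k(\alpha)} C_{\co(s(\cU))}, \qquad (\star)$$
where I have used $|B_{r(\cU)}| = C_{\co(r(\cU))}$ and likewise for $s$.

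Now I split each sum in $(\star)$ according to whether $\cU\in\Gamma_k(\alpha)$ (all blocks intervals) or not. For a non-interval $\cU$, passing to the interval refinement $\cV$ from the proof of Lemma~\ref{lem:r and s perms} gives $r(\cU)=r(\cV)$ and $s(\cU)=s(\cV)$, both compositions of the partition whose parts are the block sizes of $\cV$; since $\cV$ has strictly more than $p$ blocks, Lemma~\ref{lem:r and s perms} ($r\sim s$) and the inductive hypothesis yield $C_{\co(r(\cU))}=C_{\co(s(\cU))}$. Hence the non-interval contributions agree on the two sides of $(\star)$ and cancel, reducing $(\star)$ to its $\Gamma_k(\alpha)$-part. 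On the right, every $\cW\in\Gamma_k(\alpha)$ has $s(\cW)=J$ with $\co(J)=\alpha$ (the interval case established inside the proof of Lemma~\ref{lem:r and s perms}), so that side is $|\Gamma_k(\alpha)|\,C_\alpha = |S_k'(\alpha)|\,C_\alpha$. On the left, Lemma~\ref{lem:Gamma bij} rewrites the sum as $\sum_{I\in S_k'(\alpha)} C_{\alpha(I)}$. Recalling that $S_k(\alpha)$ differs from $S_k'(\alpha)$ at most by $I=[m]$, for which $\alpha([m])=\alpha$, this is precisely the hypothesis~(\ref{eq:constance}) of Lemma~\ref{lem:coef equal} for the pair $(\alpha,k)$; the remaining case $k=n$ is trivial. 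As this holds for all $\alpha\sim\lambda$ and all $k$, Lemma~\ref{lem:coef equal} forces the $C_\alpha$ with $\alpha\sim\lambda$ to be equal, completing the induction and hence the proof.

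The main obstacle is the middle step. The contributions of the non-interval $\cU$ to $(\star)$ are indexed by compositions of a \emph{strictly finer} partition, so they cannot be cancelled term-by-term within the single $\sim$-class of $\lambda$; this is exactly what forces the induction on the number of parts and makes Lemmas~\ref{lem:r and s perms} and~\ref{lem:Gamma bij} the load-bearing ingredients, with everything else reducing to bookkeeping around $\co$ and the coefficient extraction of Lemma~\ref{lem:coef on i}.
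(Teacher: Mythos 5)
Your argument is correct and follows essentially the same route as the paper's proof: induction on the number of parts, coefficient extraction on $\bx^{\{k\}}$ via Lemma~\ref{lem:coef on i}, cancellation of the non-interval terms by the inductive hypothesis together with Lemma~\ref{lem:r and s perms}, and the reduction to Lemma~\ref{lem:coef equal} through the bijection of Lemma~\ref{lem:Gamma bij}. The only cosmetic difference is that you justify the ``strictly more than $p$ parts'' claim by passing to the interval refinement $\cV$, where the paper counts $|[n-1]\setminus\cU^*|\geq p$ directly; both are fine.
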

\begin{proof}
Define $B_\alpha:= B_J$ where $\co(J) = \alpha$.  By recalling the discussion at the start of this subsection, it suffices to prove this proposition by showing that  $|B_\alpha| = |B_\beta|$  whenever $\alpha\sim \beta$.  We proceed by induction on the number of parts in our compositions where the base case is when our composition has $n$ parts.  This case holds trivially since $(1^n)$ is the only composition with $n$ parts.  

Take $\alpha\models n$ with $p$ parts.  As $B$ is $D$-commutative with all inverse $J$-classes, we know that (\ref{eq:B commutes}) holds for this $\alpha$.  In light of this equation and Lemma~\ref{lem:coef on i} it follows that 
$$\sum_{\cU\in \Pi_k(\alpha)}|B_{r(\cU)}|=\sum_{\cU\in \Pi_k(\alpha)}|B_{s(\cU)}|$$
holds for all $1\leq k<n$.  Now consider a particular  $\cU\in \Pi_k(\alpha)\setminus \Gamma_k(\alpha)$. As $\cU$ has $p$ blocks and at least one of them is not an interval, it follows that $|[n-1]\setminus \cU^*|\geq p$.  Therefore the composition corresponding to $r(\cU)$ and $s(\cU)$ has at least $p+1$ parts and by Lemma~\ref{lem:r and s perms} we know that $r(\cU) \sim s(\cU)$.    It now follows by induction that $|B_{r(\cU)}|= |B_{s(\cU)}|$.  
Consequently 
$$\sum_{\cU\in \Gamma_k(\alpha)}|B_{r(\cU)}| = \sum_{\cU\in \Gamma_k(\alpha)}|B_{s(\cU)}| = |\Gamma_k(\alpha)|\cdot |B_\alpha|,$$
where the second equality follows from the fact that if $\cU\in \Gamma_k(\alpha)$ then $\co(s(\cU)) = \alpha$. By appealing the bijection in Lemma~\ref{lem:Gamma bij} we have for all $k< n$
\begin{equation}\label{eq:gamma2}
 \sum_{I\in S_k'(\alpha)} |B_{\alpha(I)}| = |S_k'(\alpha)|\cdot |B_\alpha|.
\end{equation}
In the case $I \in S_k(\alpha)\setminus S_k'(\alpha)$ then $\alpha(I) = \alpha$.  By adding the term $|B_\alpha|$ to both sides of (\ref{eq:gamma2}) if necessary we have
\begin{equation}\label{eq:gamma2}
 \sum_{I\in S_k(\alpha)} |B_{\alpha(I)}| = |S_k(\alpha)|\cdot |B_\alpha|.
\end{equation}
for all $k<n$ and compositions $\alpha$ with $p$ parts.  As the case when $k=n$ yields a trivial equality we may further assume $k\leq n$.  Appealing to Lemma~\ref{lem:coef equal} with $C_\alpha := |B_\alpha|$ we conclude that $|B_\alpha| = |B_\alpha|$ for compositions $\alpha\sim \beta$ with $p$ parts.  This completes our proof.  
\end{proof}

\subsection{Necessity: Symmetry implies $D$-symmetry}\label{subsec:Knuth}

The goal of this subsection is to prove that symmetric multisets are $D$-symmetric, i.e., to prove Proposition~\ref{prop:sym->Dsym}.   We first show that this proposition holds for fine sets and, using this fact, ``bootstrap'' up to the general result.  As fine sets are intimately connected to the theory of tableaux we begin by introducing the needed ideas from this theory. 

A \emph{standard Young tableaux} of shape $\mu\vdash P$ is a filling of the Young digram of $\mu$ with each number in $[n]$ used exactly once so that rows and columns are strictly increasing.  We denote by $\SYT(\mu)$ the set of all standard Young tableaux of shape $\mu$ and set $\SYT(n) = \cup_{\mu\vdash n}\SYT(\mu)$.  For any $P\in \SYT(n)$ and $m\leq n$ we define $P_{<m}$ to be the standard Young tableaux in $\SYT(m-1)$ given by the entries in $P$ that are $<m$.    We refer to a coordinate location in a Young tableau as a \emph{box} and the element in a box as a \emph{value}.  All boxes are coordinatized using matrix coordinates.

For any $P\in \SYT(n)$ we define its \emph{descent set} by
$$\Des P: = \makeset{i}{\textrm{$i+1$ is on a row below $i$ in $P$}}\subseteq [n-1].$$

\ytableausetup{mathmode, boxsize=2.3em}
Given $P\in\SYT(n)$ we say a sequence of boxes $b_0,\ldots, b_m$ in $P$ is a \emph{promotion path} provided that $b_{i+1}$ is whichever of the boxes immediately below or to the right of $b_i$ that contain the smaller value for all $0\leq i<m$.  So given an initial box $b_0$ the maximal promotion path starting at $b_0$ is uniquely determined.  Consequently it makes sense to define the \emph{$v$-promotion path} in $P$ to be the maximal promotion path that starts at the box containing the value $v$.  

For any $\mu\vdash n$ and $a,b\in [n]$ with $a\leq b$ define the \emph{promotion operator}
$$\partial_a^b: \SYT(\mu) \to \SYT(\mu)$$ 
as follows.  Fix some $P\in \SYT(\mu)$ and consider the \emph{skew tableau} formed by the values in $[a,b]$.    Let $c_0,\ldots, c_m$ be the $a$-promotion path in this skew tableau.  Next delete the entry in box $c_0$ and slide the value in $c_{i+1}$ into $c_i$.  As $c_m$ is now empty, place in it the value $b+1$.  Finally decrement each value in this skew tableau by 1 to obtain $\partial_a^b P\in \SYT(\mu)$. 

\begin{exa}
Take $P$ to be the tableau on the left then $\partial_3^{12}P$ is the tableau on the right:
	\ytableausetup{mathmode, boxsize=2.2em}
$$\begin{ytableau}
1& *(gray)3& *(lightgray)6& *(lightgray)7\\
2& *(gray)5& *(gray)9& *(gray)11\\
*(lightgray)4& *(lightgray)10& 13& 15\\
*(lightgray)8& 14\\
*(lightgray)12
\end{ytableau}
\qquad\qquad \qquad 
\begin{ytableau}
1& *(lightgray)4& *(lightgray)5& *(lightgray)6\\
2& *(lightgray)8& *(lightgray)10& *(lightgray)12\\
*(lightgray)3& *(lightgray)9& 13& 15\\
*(lightgray)7& 14\\
*(lightgray)11
\end{ytableau}\ .$$
Here the boxes corresponding to the skew tableau are in gray and the promotion path in $P$ is in dark gray.
\end{exa}
We point out two important properties of the promotion operator.  First it is clear that
$$P_{<a}=(\partial_a^b P)_{<a}.$$
Second, we see that for each $\mu\vdash n$ the mapping $\partial_a^b:\SYM(\mu) \to \SYM(\mu)$ is bijective as its inverse can be constructed as follows.  Let $c_0$ be the box containing $b$ and define the unique maximal sequence of boxes $c_0,\ldots, c_m$ so that $c_{i+1}$ is whichever of the boxes immediately above or to the left of $c_i$ that contains the larger value.   Now delete the value $b$ in $c_0$ and slide the value in $b_{i+1}$ into box $b_{i}$.  Next increment all the values in $[a,b]$ by 1 and place $a$ in the empty box $b_m$. 

We introduce more theory related to tableaux below as it is needed.  For now we have enough to prove our first few lemmas.

\begin{lemma}\label{lem:u iff u-1}
	For $Q\in \SYT(n)$ and $a<u<b$ we have 
$$u\in \Des Q\ \Longleftrightarrow\ u-1 \in \Des(\partial_a^b Q).$$
\end{lemma}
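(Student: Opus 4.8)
The plan is to track, box by box, how the values $u-1,u,u+1$ are repositioned by $\partial_a^b$ and then translate the two descent conditions into comparisons of row indices. Write $c_0,\dots,c_m$ for the $a$-promotion path in the skew tableau of $Q$ on the values $[a,b]$, with $c_i$ holding the value $v_i$, so that $a=v_0<v_1<\cdots<v_m$. Unwinding the definition of $\partial_a^b$ --- delete $a$, slide the value $v_{i+1}$ from $c_{i+1}$ into $c_i$, insert $b+1$ at $c_m$, and finally decrement every value by $1$ --- shows that in $\partial_a^b Q$ the value $w\in[a,b-1]$ occupies the box $g(\mathrm{pos}_Q(w+1))$, where $g$ fixes every off-path box and sends each path box $c_j$ (for $j\ge 1$) to its predecessor $c_{j-1}$. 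In particular the box of $u-1$ in $\partial_a^b Q$ is $g(\mathrm{pos}_Q(u))$ and the box of $u$ is $g(\mathrm{pos}_Q(u+1))$. Since $\Des$ is read off from row indices, the lemma reduces to the single assertion that $g$ preserves the strict row comparison between the boxes of $u$ and $u+1$ in $Q$.

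First I would record two elementary facts about a standard tableau that drive everything. (i) If two consecutive values lie in the same row then their boxes are horizontally adjacent, and dually for columns; this is immediate since no integer lies strictly between $u$ and $u+1$. (ii) Each step $c_i\to c_{i+1}$ of the promotion path goes to the smaller of the entries immediately below and to the right of $c_i$, so its direction (down versus right) is forced by that comparison. Because $g$ moves a box up by one row along a down-step and leaves the row unchanged along a right-step, the entire content of the proof lies in controlling how $g$ can alter the two rows in question.

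I would then split into four cases according to whether $u$ and $u+1$ lie on the promotion path, using the position formula above. When neither is on the path, $g$ fixes both boxes and the equivalence is immediate. When exactly one of them lies on the path, $g$ moves that box back along a single path step; here I would show the dangerous configuration --- in which the moved box crosses the row of the fixed box --- cannot occur, by combining fact (i) with the forcing rule (ii): the hypothetical same-row adjacency would force the path step at the relevant vertex to point in the opposite direction, a contradiction. When both $u$ and $u+1$ lie on the path they are necessarily consecutive path entries $v_i,v_{i+1}$, and the two conditions become ``the step $c_{i-1}\to c_i$ is a descent'' versus ``the step $c_i\to c_{i+1}$ is a descent''; a short argument with (i) and (ii) shows that two successive path steps joining values differing by one must have the same direction, so these agree.

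The main obstacle is precisely this last case together with the mixed subcases: naively the relevant steps of the promotion path could point in different directions, which would break the equivalence. Resolving it rests on the nonobvious rigidity that consecutive integer values constrain the local geometry of the path --- that a down-step cannot be immediately preceded or followed by a right-step when the two joined values differ by one --- which is exactly where (i) and (ii) are combined. Once these incompatible configurations are excluded, the four cases assemble into the stated equivalence, and the hypothesis $a<u<b$ is used exactly to guarantee that $u-1\ge a$, that $u+1\le b$, and that $u\ne a=v_0$ so that a path predecessor $c_{i-1}$ always exists whenever $u$ lies on the path.
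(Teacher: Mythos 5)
Your proof is correct, and it rests on the same underlying mechanism as the paper's: the only way the descent equivalence could fail is through a local configuration of the promotion path near $u$ and $u+1$, and each such configuration is killed by the ``go to the smaller neighbor'' rule together with the fact that consecutive values in the same row (or column) must be adjacent. Where you differ is in organization. The paper proves only the forward implication directly, exhibiting exactly two impossible configurations, and then obtains the converse for free by passing to the conjugate tableau, using that $\Des(Q^t)=[n-1]\setminus\Des Q$ and that conjugation commutes with $\partial_a^b$. You instead make explicit the position formula --- the value $w$ sits at $g(\mathrm{pos}_Q(w+1))$ in $\partial_a^b Q$, where $g$ slides path boxes back one step --- which reduces the lemma to the statement that $g$ preserves the strict row comparison between the boxes of $u$ and $u+1$, and you then verify this in all four cases of path membership, so both directions of the biconditional come out at once with no appeal to duality. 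Your route is a bit longer (four cases versus two configurations plus a symmetry) but more self-contained, and it makes precise the step the paper only gestures at when it says the values ``shift up one unit, shift left one unit, or remain fixed.'' One small point of wording: in your third case only the step $c_i\to c_{i+1}$ joins values differing by one (the step $c_{i-1}\to c_i$ need not), so the rigidity claim should be stated as ``if a path step joins values differing by one, then the preceding step has the same direction''; the argument you sketch with facts (i) and (ii) does prove exactly this, and it is precisely the paper's second impossible configuration and its transpose.
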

\begin{proof}
	First assume that $u\in \Des Q$.  In the calculation of $\partial_a^b Q$ the values in $[a+1,b]$ in $Q$  shift up one unit, shift left one unit, or remain fixed before being decremented by 1. Hence the only way $u-1\notin \Des(\partial_a^b Q)$ is if $Q$ contains one of the following configurations:
\begin{center}
\ytableausetup{mathmode, boxsize=2.2em}
\begin{ytableau}
*(lightgray) w &z_1 & \cdots & z_\ell &  u \\
*(lightgray) u{+}1
\end{ytableau}
\quad\myor\quad
\begin{ytableau}
*(lightgray)w & *(lightgray)u \\
z_1 & *(lightgray)u{+}1
\end{ytableau}\ ,
\end{center}
where the promotion path is highlighted in gray.  In both cases a simple check shows that such promotion paths are impossible. (E.g., in the second case $z_1<u$.) So neither of these two cases can occur.   We conclude that the forward direction of our lemma holds.

To prove the reverse direction, we need to show that if $u\notin\Des Q$ then $u-1\notin\Des(\partial_a^bQ)$.  Observe that if $Q^*$ denotes the conjugate of $Q$ then $u\in \Des Q^*$.  Also note that the operation of taking the conjugate commutes with $\partial_a^b$.  So to prove this direction we need only apply the above argument to $Q^*$.  The lemma now follows.  
\end{proof}

\begin{lemma}\label{lem:v,v+1}
Let $Q\in \SYT(n)$ and $u\leq k+1<n$.    Then
$$u\in \Des Q\iff k+1\in \Des(\partial_{u}^{k+1}\circ \partial_{u+1}^{k+2} Q ).$$
\end{lemma}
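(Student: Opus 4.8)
The plan is to argue by induction on the width $d := (k+1)-u$ of the two nested windows $[u,k+1]$ and $[u+1,k+2]$ on which the operators act. It is worth noting at the outset why this is not a direct corollary of Lemma~\ref{lem:u iff u-1}: the value $u$ occupies the \emph{bottom} boundary of the inner window $[u+1,k+2]$, while the value $k+1$ occupies the \emph{top} boundary of the outer window $[u,k+1]$, and Lemma~\ref{lem:u iff u-1} speaks only to \emph{interior} positions $a<v<b$. Thus the statement is really a boundary (``wrap-around'') rule for the composite, and the role of the induction is to peel the windows down to a case where Lemma~\ref{lem:u iff u-1} does apply.

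For the base case $d=0$, i.e.\ $k+1=u$, both windows are singletons; since a promotion operator on a one-element window is the identity (immediate from the definition of $\partial_a^b$), the composite is the identity and the asserted equivalence $u\in\Des Q\iff u\in\Des Q$ is trivial. For the inductive step I would compare the gap-$d$ composite $Q_2:=\partial_u^{k+1}\partial_{u+1}^{k+2}Q$ with the gap-$(d-1)$ composite $\partial_u^{k}\partial_{u+1}^{k+1}Q$, to which the induction hypothesis applies and yields $u\in\Des Q\iff k\in\Des(\partial_u^{k}\partial_{u+1}^{k+1}Q)$. The goal then reduces to transferring the top descent across the top boundary, namely to proving
$$k+1\in\Des Q_2\iff k\in\Des\!\left(\partial_u^{k}\partial_{u+1}^{k+1}Q\right).$$
The available levers are the restriction identity $(\partial_a^b P)_{<a}=P_{<a}$, which guarantees that lowering the tops of the windows from $k+2,k+1$ to $k+1,k$ leaves the portion of the tableau below the windows untouched (so the two composites cannot differ in any descent below $u$), the bijectivity of each $\partial_a^b$, and Lemma~\ref{lem:u iff u-1} itself for the genuinely interior positions.

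The step that I expect to be the crux is the top-boundary crossing displayed above. Writing $c_0,\dots,c_m$ for the relevant promotion path, the value $k+2$ lies outside the outer window and is therefore fixed by $\partial_u^{k+1}$, whereas the value $k+1$ is deposited at the terminal box $c_m$ of the $u$-promotion path; hence whether $k+1\in\Des Q_2$ is decided by the relative rows of a fixed box and the endpoint of a promotion path, exactly the data that Lemma~\ref{lem:u iff u-1} cannot see. I plan to settle this the way Lemma~\ref{lem:u iff u-1} was settled: by enumerating the local configurations that a promotion path can create around the largest value, ruling out the ones that would falsify the equivalence, and invoking the fact that conjugation of tableaux commutes with $\partial_a^b$ to reduce the two directions of the ``$\iff$'' to a single configuration check.
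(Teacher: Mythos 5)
Your induction is set up in a logically valid way, and your base case ($\partial_a^a$ is the identity) is correct, but the proposal has a genuine gap: the inductive step is never actually proved, and it is not easier than the lemma itself. The reduction you arrive at,
$$k+1\in\Des\bigl(\partial_u^{k+1}\partial_{u+1}^{k+2}Q\bigr)\iff k\in\Des\bigl(\partial_u^{k}\partial_{u+1}^{k+1}Q\bigr),$$
compares descents at the tops of two \emph{different} composite operators applied to $Q$, and neither Lemma~\ref{lem:u iff u-1} nor the induction hypothesis gives you any leverage on it. Note also that your description of this step is slightly off: you say that whether $k+1\in\Des Q_2$ is decided by ``the relative rows of a fixed box and the endpoint of a promotion path,'' but $k+2$ does \emph{not} sit in a fixed box --- it has already been relocated by $\partial_{u+1}^{k+2}$ to the terminal box of the first promotion path. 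So in every formulation (yours or the original) the content of the lemma is the relative position of the \emph{terminal boxes of two interacting promotion paths}: the path $B$ used by $\partial_{u+1}^{k+2}$ (starting at the box of $u+1$, depositing $k+2$ at its end) and the path $C$ used by $\partial_{u}^{k+1}$ (starting at the box of $u$, depositing $k+1$ at its end). Your induction merely relocates this question from the pair $(k+1,k+2)$ to the pair $(k,k+1)$ without resolving it, so it buys nothing.

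The missing idea is an argument about how the two paths $B$ and $C$ can cross. The hypothesis $u\in\Des Q$ forces $c_0$ strictly above $b_0$, while the negation $k+1\notin\Des(\partial Q)$ forces $c_\ell$ weakly below and strictly left of $b_m$; one then examines the \emph{first} box of $C$ that lies weakly below and strictly to the left of a box of $B$ and checks that the resulting $2\times 2$ local configuration of values is incompatible with both paths' greedy rules (the box of $C$ would be forced to continue through a box of value smaller than $u$'s image, a contradiction). That crossing analysis --- not a window-shrinking induction --- is the substance of the lemma, and it is absent from your proposal; ``enumerating the local configurations'' is precisely the part that needs to be written down. Your final remark that conjugation of tableaux commutes with $\partial_a^b$ and reduces the converse direction to the forward one is sound and is how the converse should indeed be handled, but it only helps once the forward direction has an actual proof.
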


\begin{proof}
Set $\partial:= \partial_{u}^{k+1}\circ \partial_{u+1}^{k+2}$.  Suppose for a contradiction that $u\in \Des(Q)$ and $k+1\notin \Des(\partial Q)$.   Let $B=(b_0,\ldots, b_m)$ be the promotion path used by $\partial_{u+1}^{k+2}$ and let $C=(c_0,\ldots,c_\ell)$ be the promotion path used by $\partial_u^{k+1}$.  So in $Q$ boxes $b_0$ and $c_0$ contain $u+1$ and $u$ respectively, and in $\partial Q$ boxes $b_m$ and $c_\ell$ contain $k+2$ and $k+1$ respectively.   As $u\in \Des Q$ then $c_0$ is strictly above $b_0$.  By our assumption that $k+1\notin\Des(\partial Q)$ we must also have that  $c_\ell$ is weakly below $b_m$.  In fact since $b_m$ contains $k+2$ in $\partial_{u+1}^{k+2} Q$ then $c_\ell$ is also strictly to the left of $b_m$.      Now consider the first time a box in $C$ is weakly below and strictly to the left of some box in $B$.  Certainly this box cannot be $c_0$ (since $c_0$ is above $b_0$) and in fact we must have the following configuration of values in $Q$:
\ytableausetup{mathmode, boxsize=2.2em}
\begin{center}
\definecolor{red}{HTML}{f12e2e}
\definecolor{blue}{HTML}{204ae2}
\definecolor{purple}{HTML}{893C88}
\begin{ytableau}
*(blue) & w \\
*(purple) & *(red)u
\end{ytableau}\ ,
\end{center}
where the blue and purple boxes are in $C$ and the red and purple boxes are in $B$. So $w<u$. Therefore in $\partial_{u+1}^{k+2}Q$ the purple box contains $u-1$ and the white box contains $w' =w$ or $w-1$.  Since the standard Young tableaux $\partial_{u+1}^{k+2}Q$ contains exactly one occurrence of $w$ it follows that $u-1>w'$.  It is now immediate that if the blue box is in $C$ then $C$ must contain the white box and not the purple box.   We arrive at our desired contradiction proving that the forward direction holds.

The converse can be proved by an argument similar to that used to prove the converse in the previous lemma.  
\end{proof}

\begin{definition}
For any $\mu\vdash n$ let $k\leq n$ be such that $V=\{v_1<v_2<\cdots< v_{n-k}\}\subseteq [n]$. Define the mapping $\partial_V:\SYT(\mu) \to \SYT(\mu)$
by 
$$\partial_{V}:= \partial_{v_1}^{k+1}\circ\partial_{v_2}^{k+2}\circ \cdots\circ \partial_{v_{n-k}}^n.$$ 
Take $\partial_{\emptyset}$ to be the identity function. 
\end{definition}
We pause to comment on our choice of indexing above.  In what follows the operator $\partial_V$ appears in the context where $V$ is the second block of an ordered partition where the first block has size $k$. As a result we have chosen to index the elements of $V$ as above. 

\begin{exa}
	For example $V= \{3,9,10\}\subseteq [15]$.  Then we have
\begin{center}
\ytableausetup{mathmode, boxsize=1.9em}
\begingroup
\setlength{\tabcolsep}{10pt} % Default value: 6pt
\renewcommand{\arraystretch}{2} % Default value: 1
	\begin{tabular}{cccc}
$P$& $P_1=\partial_{10}^{15}P$ & 	$P_2=\partial_{9}^{14}P_1$ & $\partial_V P=\partial_{3}^{13}P_2$ \\
$\begin{ytableau}
1& 3& 6& 7\\
2& 5& 9& *(lightgray)11\\
4& *(lightgray)10& *(lightgray)13& *(lightgray)15\\
8& *(lightgray)14\\
*(lightgray)12
\end{ytableau}$
&
$\begin{ytableau}
1& 3& 6& 7\\
2& 5& *(lightgray)9& *(lightgray)10\\
4& *(lightgray)12& *(lightgray)14& 15\\
8& *(lightgray)13\\
*(lightgray)11
\end{ytableau}$
&
$\begin{ytableau}
1& *(lightgray)3& *(lightgray)6& *(lightgray)7\\
2& *(lightgray)5& *(lightgray)9& 14\\
*(lightgray)4& *(lightgray)11& *(lightgray)13& 15\\
*(lightgray)8& *(lightgray)12\\
*(lightgray)10
\end{ytableau}$
&
$\begin{ytableau}
1& 4& 5& 6\\
2& 8& 12& 14\\
3& 10& 13& 15\\
7& 11\\
9
\end{ytableau}$
\end{tabular}
\endgroup
\end{center}
where the skew tableau used to obtain the following tableau is highlighted.  
\end{exa}

The next lemma follows immediately from the fact that each promotion operator is bijective. 

\begin{lemma}\label{lem:partial bij}
Fix $\mu\vdash n$ and some $V\subseteq [n]$.  Then $\partial_V:\SYT(\mu) \to \SYT(\mu)$ is a bijection.
\end{lemma}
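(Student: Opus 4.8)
The plan is to prove Lemma~\ref{lem:partial bij} by showing that the composite $\partial_V$ is a bijection, relying on the fact---stated earlier in the excerpt---that each individual promotion operator $\partial_a^b:\SYT(\mu)\to\SYT(\mu)$ is a bijection. Since $\partial_V$ is by definition the composite
$$\partial_V = \partial_{v_1}^{k+1}\circ \partial_{v_2}^{k+2}\circ\cdots\circ\partial_{v_{n-k}}^n,$$
and a composition of bijections on a finite set is again a bijection, the result is essentially immediate. So the work is not in finding a deep argument but in confirming that each factor is a well-defined bijection $\SYT(\mu)\to\SYT(\mu)$ and then invoking closure of bijections under composition.

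The key step I would verify first is that each factor $\partial_{v_i}^{k+i}$ genuinely maps $\SYT(\mu)$ to itself bijectively. This was asserted in the discussion immediately following the definition of the promotion operator, where an explicit two-sided inverse was constructed: one locates the box containing the top value $b$, traces the maximal backward path taking whichever of the boxes immediately above or to the left contains the \emph{larger} value, slides values back along this path, increments the values in $[a,b]$, and places $a$ in the vacated box. Granting that construction, each $\partial_{v_i}^{k+i}$ is invertible. The second step is then purely formal: a finite composition of bijections of a set with itself is a bijection, with inverse given by composing the individual inverses in the reverse order.

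The main (and only) obstacle I anticipate is simply ensuring the indices are consistent so that the composition is well-typed---that is, that every factor $\partial_{v_i}^{k+i}$ has source and target equal to the same $\SYT(\mu)$, so they actually compose. Because $V=\{v_1<\cdots<v_{n-k}\}\subseteq[n]$ and the upper indices range over $k+1,\ldots,n$, each factor operates on the same fixed shape $\mu$ and preserves it, so no typing issue arises. I would therefore write the proof in a single sentence, as the lemma statement itself signals (``follows immediately from the fact that each promotion operator is bijective''): since $\partial_V$ is a composition of the bijections $\partial_{v_i}^{k+i}$, and any composition of bijections from $\SYT(\mu)$ to itself is a bijection, we conclude that $\partial_V:\SYT(\mu)\to\SYT(\mu)$ is a bijection. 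The edge case $V=\emptyset$ is handled by the convention that $\partial_\emptyset$ is the identity, which is trivially a bijection.
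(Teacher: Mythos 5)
Your proof is correct and matches the paper's argument exactly: the paper also dispatches this lemma by observing that $\partial_V$ is a composition of the promotion operators $\partial_a^b$, each of which is a bijection on $\SYT(\mu)$ by the explicit inverse construction given earlier. Nothing further is needed.
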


\begin{lemma}\label{lem:Knuth Dsym}
Let $Q\in \SYT(n)$ and fix $\cU=(U,V)\in \Pi(n)$.  For $u\in \cU^*$ we have
$$u\in \Des Q\iff \delta_\cU(u)\in \Des(\partial_{V} Q).$$
\end{lemma}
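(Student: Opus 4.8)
The plan is to split on which block contains the consecutive pair $u,u+1$, writing $\cU=(U,V)$ with $k=|U|$, $m=|V|=n-k$, $U=\{u_1<\cdots<u_k\}$ and $V=\{v_1<\cdots<v_m\}$, so that $\delta_\cU(u_i)=i$ and $\delta_\cU(v_j)=k+j$. Throughout I will track a single value through the composition $\partial_V=\partial_{v_1}^{k+1}\circ\cdots\circ\partial_{v_m}^{n}$, applying the operators one at a time from the right and recording how the descent status of the tracked value transforms. The two facts I will lean on are Lemma~\ref{lem:u iff u-1} (the operator $\partial_a^b$ carries an interior descent value $a<w<b$ down to $w-1$) and the elementary observation that $\partial_a^b$ fixes every box whose value lies outside $[a,b]$, so descents among such values are left untouched.

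First I treat the case $u\in U^*$, i.e.\ $u=u_i$ and $u+1=u_{i+1}$, where the target is $\delta_\cU(u)=i$. Here $u,u+1\notin V$, so no promotion operator starts at $u$ or $u+1$. Processing $\partial_{v_m}^{n},\partial_{v_{m-1}}^{n-1},\ldots$ in turn, an operator $\partial_{v_\ell}^{k+\ell}$ with $v_\ell>u$ has its entire active range above $u+1$ and therefore leaves the tracked descent fixed, while an operator with $v_\ell<u$ shifts it down by one via Lemma~\ref{lem:u iff u-1}. There are exactly $u-i$ operators of the latter type (the number of elements of $V$ below $u$), so the tracked value descends from $u$ to $i$. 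The point requiring care is checking, at each shift, that the tracked value $w$ still lies strictly inside $(v_\ell,k+\ell)$; a short computation, using that before processing $\partial_{v_\ell}^{k+\ell}$ one has $w=i+\ell$ together with $v_\ell\le i-1+\ell$ and $i<k$, confirms $v_\ell<w<k+\ell$, so Lemma~\ref{lem:u iff u-1} applies and the resulting chain of biconditionals yields $u\in\Des Q\iff i\in\Des(\partial_V Q)$.

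The case $u\in V^*$, with $u=v_j$, $u+1=v_{j+1}$ and target $\delta_\cU(u)=k+j$, is the heart of the argument and is where Lemma~\ref{lem:v,v+1} enters. I would factor $\partial_V=L\circ M\circ R$ with $R=\partial_{v_{j+2}}^{k+j+2}\circ\cdots\circ\partial_{v_m}^{n}$, $M=\partial_{v_j}^{k+j}\circ\partial_{v_{j+1}}^{k+j+1}$, and $L=\partial_{v_1}^{k+1}\circ\cdots\circ\partial_{v_{j-1}}^{k+j-1}$. Every operator in $R$ starts strictly above $u+1$, hence fixes the boxes of $u$ and $u+1$ and preserves the descent at $u$. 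The middle factor $M$ has exactly the shape $\partial_u^{k'+1}\circ\partial_{u+1}^{k'+2}$ with $k'=k+j-1$, and the hypotheses $u=v_j\le k+j$ and $k+j<n$ both hold, so Lemma~\ref{lem:v,v+1} converts the descent at $u$ into a descent at $k+j$. Finally every operator in $L$ has active range capped at $k+j-1$, so the values $k+j$ and $k+j+1$ lie above it and their boxes are untouched, preserving the descent at $k+j$; the one delicate point is $\partial_{v_{j-1}}^{k+j-1}$, whose internal placeholder value $b+1=k+j$ momentarily coincides with the tracked value but is immediately decremented, so no genuine collision occurs. Chaining the three biconditionals gives $u\in\Des Q\iff k+j\in\Des(\partial_V Q)$.

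I expect the main obstacle to be the case $u\in V^*$: correctly isolating the consecutive pair $\partial_{v_j}^{k+j}\circ\partial_{v_{j+1}}^{k+j+1}$ so that Lemma~\ref{lem:v,v+1} applies verbatim, and verifying that the flanking factors $R$ and $L$ leave the relevant descents undisturbed, in particular the placeholder coincidence at $\partial_{v_{j-1}}^{k+j-1}$ and the boundary conditions $u\le k+j<n$ demanded by Lemma~\ref{lem:v,v+1}. By contrast the case $u\in U^*$ is essentially bookkeeping: the recursion $w=i+\ell$ keeps the tracked value inside each active window, and the conclusion follows by iterating Lemma~\ref{lem:u iff u-1}.
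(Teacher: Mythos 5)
Your proof is correct and follows essentially the same route as the paper's: both arguments rest on exactly the two key facts, Lemma~\ref{lem:u iff u-1} for a pair $u,u+1$ lying in $U$ (iterated once per element of $V$ below $u$) and Lemma~\ref{lem:v,v+1} for a pair lying in $V$ (after checking that the flanking promotion operators leave the relevant boxes untouched). The only difference is presentational — the paper organizes the same computation as an induction on $|V|$ that peels off $\min(V)$ at each step, whereas you unroll that induction into the explicit factorization $L\circ M\circ R$ and track the value directly.
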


\begin{proof}
We prove this by induction on $|	V|$.  Observe that when $V=\emptyset$ then $\partial_VQ = Q$ and $\delta_\cU = 1$ so the lemma holds in this case.  

Now take $|U|=k<n$ so that $|V|>0$ and let $v_1 = \min(V)$.  Define $V'=V\setminus \{v_1\}$ and $U' = U \cup \{v_1\}$ and set $\cW = (U',V')$. For $x\in [v_1]$ we then have $\delta_\cW(x) = x$ and by induction we know that if $u\in \cW^*$ then
$$u\in \Des Q \iff \delta_\cW(u) \in \Des( \partial_{V'} Q).$$
Now define the function $f:[n]\to [n]$ by setting
$$f(x) = \begin{cases}
	x - 1 & \textrm{ if } v_1<x\leq k+1\\
	x & \textrm{ otherwise}.
\end{cases}$$
Observe that for $x\neq v_1$ we have $f\circ\delta_\cW(x) =\delta_\cU(x)$. 

Next recall that the positions of values less than $v_1$ and greater than $k+1$ are invariant under the action of $\partial_{v_1}^{k+1}$.  This together with  Lemma~\ref{lem:u iff u-1} tells us that for any $T\in\SYT(n)$ and $u\neq v_1-1,v_1,k+1$ we have
\begin{align}
	u\in \Des T &\iff f(u) \in \Des(\partial_{v_1}^{k+1}T).
\end{align}
We now consider the following two cases.

\medskip
\noindent
\textbf{Case 1: } $u\in \cU^*$ and $u\neq v_1$ 
\medskip

As $u\neq v_1$ then $\delta_\cW(u) \neq v_1$. As $v_1=\min(V)$ then $u\neq v_1-1$ and so $\delta_\cW(u) \neq v_1-1$.    As $u\neq \max(U)$ then $\delta_\cW(u) \neq k+1$.  Computing we now have
\begin{align*}
u\in \Des Q &\iff \delta_\cW(u) \in \Des( \partial_{V'} Q)\\
& \iff f\circ\delta_\cW(u) \in \Des( \partial_{v_1}^{k+1}\partial_{V'} Q)\\
& \iff \delta_\cU(u) \in \Des( \partial_V Q).
\end{align*}
So the lemma holds in this case.

\medskip
\noindent
\textbf{Case 2: } $u=v_1\in \cU^*$
\medskip

In this case $|V|\geq 2$ and $v_1+1 = v_2$.  Set $V'' = V\setminus \{v_1,v_2\}$ so that $V''$ is either empty or all its values are $>v_2$.  By definition we have
$$\partial_V Q  = \partial_{v_1}^{k+1}\circ \partial_{v_2}^{k+2}R$$
where $R = \partial_{V''}Q$. (Note that if $|V| = 2$ then $Q=R$.)  As  $Q_{<v_2+1} = R_{<v_2+1}$ and  $v_1<v_2$ we have $v_1\in\Des Q \iff v_1\in \Des R$. This together with Lemma~\ref{lem:v,v+1} gives 
\begin{align*}
v_1\in \Des Q &\iff v_1 \in \Des R \\
&\iff k+1\in \Des(\partial_V Q) \\
&\iff \delta_\cU(v_1)\in \Des(\partial_V Q).
\end{align*}
This completes our proof.

\end{proof}

In order to continue we recall some additional well known results from the theory of tableaux.  We denote the \emph{Robinson-Schensted correspondence} by 
$$\RS:\fS_n \to \bigcup_{\mu\vdash n} \SYT(\mu)\times \SYT(\mu).$$  
If $\pi\mapsto (P,Q)$ under this bijection we call $P$ the \emph{insertion tableau}. We omit the definition of $\RS$ as it is not needed here but we point out two key properties of this mapping that are well-known.  First $\RS$ is a bijection.  The second well known property we state as a lemma. For a reference see \cite{SaganBook}, Chapter 5, Exercise 22.

\begin{lemma}\label{lem:des under RS}
	If $\RS(\pi) = (P,Q)$, then $\Des(\pi) = \Des Q$.  
\end{lemma}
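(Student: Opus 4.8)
The plan is to unwind the construction of the recording tableau $Q$ and then invoke the standard row bumping lemma for successive insertions. Recall that $\RS$ is computed by row-inserting $\pi_1,\pi_2,\ldots,\pi_n$ one at a time into an initially empty tableau; at the $i$th step exactly one new box is appended to the common shape, and $Q$ is built by writing the value $i$ into that newly created box. Thus, if $b_i$ denotes the box added when $\pi_i$ is inserted, then $i$ occupies box $b_i$ of $Q$, and by the definition of $\Des Q$ we have $i\in\Des Q$ precisely when $b_{i+1}$ lies on a row strictly below $b_i$. In this way the lemma reduces to comparing, for each $i\in[n-1]$, the positions of the two boxes $b_i$ and $b_{i+1}$ produced by the consecutive insertions of $\pi_i$ and $\pi_{i+1}$.

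This comparison is exactly the content of the row bumping lemma: if one row-inserts $x$ into a tableau and then inserts $y$ into the result, the location of the second new box depends only on the comparison of $x$ and $y$. When $x<y$ the second new box lies strictly to the right of and weakly above the first, while when $x>y$ it lies weakly to the left of and strictly below the first. Applying this with $x=\pi_i$ and $y=\pi_{i+1}$ yields the desired equivalence. If $\pi_i<\pi_{i+1}$, that is $i\notin\Des(\pi)$, then $b_{i+1}$ is weakly above $b_i$ and hence not strictly below it, so $i\notin\Des Q$; and if $\pi_i>\pi_{i+1}$, that is $i\in\Des(\pi)$, then $b_{i+1}$ is strictly below $b_i$, so $i\in\Des Q$. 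Since this holds for every $i\in[n-1]$, we conclude $\Des(\pi)=\Des Q$.

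The main obstacle is establishing the row bumping lemma itself, where all the real work resides. I would prove it by induction on the rows traversed by the two bumping paths. In the case $x<y$, the inductive invariant is that in each row the entry displaced by the insertion of $y$ sits strictly to the right of the entry displaced by the insertion of $x$; this is preserved because the larger value $y$ bumps a strictly larger entry lying further to the right, and the two bumped entries passed down to the next row again satisfy the same inequality, allowing the induction to continue until both paths terminate. The case $x>y$ is entirely analogous, with the roles of ``right/weakly above'' and ``left/strictly below'' interchanged. Since this is a classical fact (see the cited reference), I would carry out the reduction above in full and simply cite the bumping lemma, but the induction just sketched is precisely what a self-contained treatment would supply.
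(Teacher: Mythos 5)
Your argument is correct and is exactly the standard proof of this classical fact: reduce $\Des Q$ to a comparison of the boxes created by consecutive insertions and invoke the Row Bumping Lemma, whose two cases ($x<y$ gives a strictly-right, weakly-above new box; $x>y$ gives a weakly-left, strictly-below one) you apply with the correct orientation. The paper itself offers no proof---it only cites \cite{SaganBook}, Chapter 5, Exercise 22---and what you have written is precisely the intended solution to that exercise, so there is no substantive difference in approach; the only place your sketch is slightly compressed is in checking where the two bumping routes terminate, which is the standard final step of the bumping lemma you say you would cite anyway.
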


For each $P\in \SYT(n)$ we define the corresponding \emph{Knuth class} to be the set
$$K(P):= \makeset{\pi\in \fS_n}{\textrm{$P$ is the insertion tableau for $\pi$}}.$$
Additionally we need the following well-known result due to Gessel.

\begin{thm}[\cite{Ges84}]\label{thm:ges}
For any $\lambda\vdash n$ we have
$$s_\lambda = \sum_{Q\in\SYT(\lambda)} F_{\Des Q,n}.$$
\end{thm}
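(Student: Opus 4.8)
The statement is Gessel's classical expansion of a Schur function in the fundamental quasisymmetric basis, so the plan is to prove it directly from the combinatorial (tableau) definition of $s_\lambda$ together with the operation of standardization. First I would recall that
$$s_\lambda = \sum_{T} \mathbf{x}^{\operatorname{wt}(T)}, \qquad \mathbf{x}^{\operatorname{wt}(T)} := \prod_i x_i^{\operatorname{wt}(T)_i},$$
where $T$ ranges over all semistandard Young tableaux of shape $\lambda$ (fillings of the diagram of $\lambda$ by positive integers, weakly increasing along rows and strictly increasing down columns) and $\operatorname{wt}(T)_i$ counts the occurrences of $i$ in $T$. The idea is then to partition this sum according to the standardization $\std(T)\in\SYT(\lambda)$, obtained by relabelling the $n$ entries of $T$ by $1,\ldots,n$ in weakly increasing order of value, breaking ties among equal entries so that, within a block of equal values, later labels are assigned to cells in weakly higher rows.

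The crux is the following fiber description. For a fixed $Q\in\SYT(\lambda)$, sending $T$ to the sequence $(b_1,\ldots,b_n)$ of its entries listed in the order prescribed by the labels of $Q$ should be a weight-preserving bijection from $\std^{-1}(Q)$ onto the set of weakly increasing sequences $1\leq b_1\leq\cdots\leq b_n$ satisfying $b_i<b_{i+1}$ whenever $i\in\Des Q$. In one direction, weak monotonicity is immediate from the definition of standardization, while the strict inequalities at descents hold because, when $i+1$ lies strictly below $i$ in $Q$, the tie-breaking rule forbids the two corresponding cells of $T$ from carrying equal values (an equal pair would force the later label into a weakly higher row). The reverse map fills the cell of $Q$ labelled $k$ with $b_k$, and here one must check that the descent-forced strict increases are exactly what guarantees strictness down columns, while weak monotonicity keeps rows weakly increasing, so that the resulting filling is a genuine semistandard tableau standardizing back to $Q$.

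Once the fiber is understood, summing the monomials over $\std^{-1}(Q)$ gives $\sum_{T\in\std^{-1}(Q)}\mathbf{x}^{\operatorname{wt}(T)} = \sum_\beta M_\beta$, where $\beta$ runs over all compositions of $n$ whose associated descent set contains $\Des Q$; equivalently $\beta\leq\co(\Des Q)$ in the refinement order, and this sum is precisely $F_{\Des Q,n}$ by the very definition of the fundamental basis. Summing over $Q\in\SYT(\lambda)$ then yields the theorem. I expect the entire difficulty to be concentrated in the second step: fixing a standardization convention for which the descents of $Q$ correspond \emph{exactly} to the positions where a strict increase in $T$ is forced, and then verifying that the inverse map genuinely produces a semistandard tableau. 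Everything else is bookkeeping with the monomial-to-fundamental change of basis already recorded earlier in the paper.
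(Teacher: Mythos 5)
The paper does not actually prove this statement: it is quoted verbatim from Gessel's 1984 paper and used as a black box, so there is no internal argument to compare yours against. That said, your proposal is the standard (and correct) proof of the identity: expand $s_\lambda$ as the generating function of semistandard tableaux, fiber the sum over the standardization map $\std$, identify each fiber $\std^{-1}(Q)$ with the weakly increasing sequences $b_1\le\cdots\le b_n$ that increase strictly at the positions of $\Des Q$, and recognize the resulting generating function as $F_{\Des Q,n}=\sum_{\beta\le \co(\Des Q)}M_\beta$. The one point you flag but do not fully carry out is the column-strictness of the inverse map: if the cell labelled $j$ in $Q$ sits directly below the cell labelled $k$ (so $k<j$), you need $b_k<b_j$, and this follows because some $i\in[k,j-1]$ must lie in $\Des Q$ --- otherwise each of $k+1,\dots,j$ would sit in a row weakly above its predecessor, contradicting that $j$ is strictly below $k$. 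With that observation supplied, and the symmetric check that the resulting filling standardizes back to $Q$ under your tie-breaking convention, the argument is complete and self-contained, which is arguably a small improvement over the paper's bare citation given that the theorem is the engine behind Lemma~\ref{lem:fine->Dsym} and Proposition~\ref{prop:sym->Dsym}.
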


This result together with the Robinson-Schensted correspondence tells us that if $P$ has shape $\lambda$ then $Q(K(P)) = s_\lambda$.  In fact we can say more. Consider any fine multiset $B\Subset\fS_n$ with $Q(B) = \sum_{\lambda\vdash n} c_\lambda s_\lambda$ where $c_\lambda\geq 0$.  For each shape $\lambda$ fix some $P_\lambda$ of that shape.  Define the multiset 
$$A = \bigsqcup_{\lambda\vdash n}\bigsqcup_{i=1}^{c_\lambda} K(P_\lambda)$$
so that $Q(A) = Q(B)$ and hence $A\equiv B$. We make use of this in our next lemma.

\begin{lemma}\label{lem:fine->Dsym}
	If $B\Subset\fS_n$ is fine then it is $D$-symmetric.
\end{lemma}

\begin{proof}

As $B$ is fine it follows from the discussion preceding the statement of this lemma that $B\equiv  \bigsqcup K(P_i)$ where our disjoint union is over some finite index set $I$ and $P_i\in \SYT(n)$. By  parts a) and b) of Lemma~\ref{lem:Dsym prop} it then suffices to show that individual Knuth classes are $D$-symmetric.  

To show this fix $P\in \SYT(\mu)$ and consider the Knuth class $K(P)$.  Take $\cU=(U,V)\vdash [n]$ and consider the mapping $\Psi_\cU:K(P)\to K(P)$ given by 
$$\pi \mapsto \RS^{-1}(P,\partial_V Q)$$
where $RS(\pi) = (P,Q)$.  The bijectivity of this mapping follows from Lemma~\ref{lem:partial bij} and the fact that $\RS$ is bijective.  Now take $u\in \cU^*$ and fix some $\pi\in K(P)$ with $\RS(\pi) = (P,Q)$.  By Lemma~\ref{lem:des under RS}  and Lemma~\ref{lem:Knuth Dsym}, since $\cU=(U,V)$, we obtain our desired result
$$u\in \Des(\pi) \iff u\in \Des Q \iff \delta_\cU(u) \in \Des(\partial_V Q)\iff \delta_\cU(u) \in \Des(\Psi_\cU(\pi)).$$
By Lemma~\ref{lem:Dsym reduction} we now conclude that $B$ is $D$-symmetric.
\end{proof}

\begin{prop}\label{prop:sym->Dsym}
If $B\Subset \fS_n$ is symmetric then it is $D$-symmetric.  	
\end{prop}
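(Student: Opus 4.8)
The plan is to reduce the general symmetric case to the fine case already handled in Lemma~\ref{lem:fine->Dsym}, using the fact (Lemma~\ref{lem:Dsym prop}) that $D$-symmetry is preserved under $\equiv$ and under disjoint unions and differences of $D$-symmetric multisets. Concretely, suppose $B$ is symmetric, so $Q(B) = \sum_{\lambda\vdash n} c_\lambda s_\lambda$ with integer (but possibly \emph{negative}) coefficients $c_\lambda$. If all $c_\lambda \geq 0$, then $B$ is fine and we are already done. The obstacle is that symmetric need not mean Schur-positive, so some $c_\lambda$ may be negative, and one cannot directly write $B$ as a disjoint union of Knuth classes.

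To circumvent this, I would introduce an auxiliary fine multiset that ``absorbs'' the negative part. Split the index set into $\Lambda^+ = \{\lambda : c_\lambda \geq 0\}$ and $\Lambda^- = \{\lambda : c_\lambda < 0\}$. For each $\lambda$ fix a tableau $P_\lambda\in\SYT(\lambda)$ and define the genuinely fine multisets
$$A^+ = \bigsqcup_{\lambda\in\Lambda^+}\bigsqcup_{i=1}^{c_\lambda} K(P_\lambda) \qquad\myand\qquad A^- = \bigsqcup_{\lambda\in\Lambda^-}\bigsqcup_{i=1}^{-c_\lambda} K(P_\lambda).$$
By Theorem~\ref{thm:ges} and the Robinson--Schensted correspondence, $Q(A^+) - Q(A^-) = \sum_\lambda c_\lambda s_\lambda = Q(B)$, so at the level of generating functions $Q(B\sqcup A^-) = Q(A^+)$, whence $B\sqcup A^- \equiv A^+$. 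Both $A^+$ and $A^-$ are fine, so by Lemma~\ref{lem:fine->Dsym} each is $D$-symmetric. By part~b) of Lemma~\ref{lem:Dsym prop} the disjoint union $A^+$ (already $D$-symmetric) and the union $B\sqcup A^-$ have the same descent-set generating function, and by part~a) of that lemma $D$-symmetry is an invariant of the $\equiv$-class; hence $B\sqcup A^-$ is $D$-symmetric.

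It now remains to peel off $A^-$. Since $A^-$ is a fine multiset it is $D$-symmetric, and it sits inside $B\sqcup A^-$ as a sub-multiset; applying part~c) of Lemma~\ref{lem:Dsym prop} to the $D$-symmetric pair $A^-\subseteq B\sqcup A^-$ yields that the difference $(B\sqcup A^-)\setminus A^- = B$ is $D$-symmetric, which is exactly the claim. I expect the only delicate point to be the bookkeeping that $Q$ is additive on disjoint unions and that subtracting $Q(A^-)$ corresponds to the multiset difference under $\equiv$ — this is routine once one observes that $Q(B) = \sum_{\pi\in B}F_{\Des(\pi),n}$ is linear in $B$, so $Q(B\sqcup A^-)=Q(B)+Q(A^-)=Q(A^+)$ forces the descent-set multiset equality $B\sqcup A^-\equiv A^+$. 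The conceptual heart is simply that negativity of Schur coefficients is handled by moving the negative terms to the other side of the equation, after which all three closure properties in Lemma~\ref{lem:Dsym prop} can be invoked in sequence.
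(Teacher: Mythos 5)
Your proposal is correct and follows essentially the same route as the paper: both absorb the negative Schur coefficients into an auxiliary fine multiset ($A^-$ in your notation, $A$ in the paper's), deduce that $B\sqcup A^-$ is fine (the paper observes this directly from Schur-positivity of $Q(B\sqcup A^-)$, while you detour through $A^+$ and $\equiv$-invariance, which amounts to the same thing since Lemma~\ref{lem:fine->Dsym} itself replaces a fine set by a union of Knuth classes), and then peel off $A^-$ using part~c) of Lemma~\ref{lem:Dsym prop}.
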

\begin{proof}
	As $B$ is assumed to be symmetric and the Schur functions $\{s_\lambda\}_{\lambda\vdash n}$ are an integral basis for $\Lambda(n)_\mathbb{Z}$, the ring of symmetric functions of degree $n$ with integral coefficients,  we can write
$$Q(B) = \sum_{\lambda\vdash n} c_\lambda s_\lambda$$
where $c_\lambda\in \mathbb{Z}$.  Now define the multiset
$$A = \bigsqcup_{\lambda\vdash n\atop c_\lambda<0}\bigsqcup_{i = 1}^{|c_\lambda|} K(\lambda)$$
so that by Theorem~\ref{thm:ges} have
$$Q(A) = \sum_{\lambda\vdash n \atop c_\lambda<0}|c_\lambda|s_\lambda,$$ 
so that $A$ is fine, and by our choice $A$ we also see that
$$Q(A\sqcup B) = Q(A) + Q(B) = \sum_{\lambda\vdash n\atop c_\lambda>0} c_\lambda s_\lambda,$$
so that $A\sqcup B$ is also fine. By Lemma~\ref{lem:fine->Dsym} we now see that $A$ and $A\sqcup B$ are both $D$-symmetric.  It now follows from Lemma~\ref{lem:Dsym prop} that $B$ is $D$-symmetric as well.  
\end{proof}

\bibliographystyle{alpha}
\bibliography{mybib}

\end{document}